\newtheorem{theorem}{Theorem}[section]
\newtheorem{thm}[theorem]{Theorem}
\newtheorem{fact}[theorem]{Fact}
\newtheorem{example}[theorem]{Example}
\newtheorem{proposition}[theorem]{Proposition}
\newtheorem{prop}[theorem]{Proposition}
\newtheorem{claim}[theorem]{Claim}
\newtheorem{conjecture}[theorem]{Conjecture}
\newtheorem{lemma}[theorem]{Lemma}
\newtheorem{question}[theorem]{Question}
\theoremstyle{definition}
\newtheorem{definition}[theorem]{Definition}
\newcommand{\NN}{{\mathbb{N}}}
\newcommand{\RR}{{\mathbb{R}}}
\newcommand{\QQ}{{\mathbb{Q}}}
\newcommand{\ZZ}{{\mathbb{Z}}}
\newcommand{\sub}{\subseteq}
\newcommand{\sN}[1]{_{#1\in \NN}}
\newcommand{\uhr}[1]{\! \upharpoonright_{#1}}
\newcommand{\ML}{Martin-L{\"o}f}
\newcommand{\PI}[1]{\Pi^0_{#1}}
\newcommand{\PPI}{\PI{1}}
\newcommand{\bi}{\begin{itemize}}
\newcommand{\ei}{\end{itemize}}
\newcommand{\bc}{\begin{center}}
\newcommand{\ec}{\end{center}}
\newcommand{\ES}{\emptyset}
\newcommand{\tp}[1]{2^{#1}}
\newcommand{\fa}{\forall}
\newcommand{\la}{\langle}
\newcommand{\ra}{\rangle}
\newcommand{\leT}{\le_{\mathrm{T}}}
\newcommand{\MLR}{\mbox{\rm \textsf{MLR}}}
\newcommand{\n}{\noindent}
\newcommand{\constr}{\n {\it Construction.}\ \ }
\newcommand{\vsps}{\vspace{3pt}}
\newcommand{\vsp}{\vspace{6pt}}
\newcommand{\w}{\omega}
\newcommand{\s}{\sigma}
\newcommand{\rest}[1]{\! \upharpoonright_{#1}} 
\newcommand{\wck}{\omega_1^{ck}}
\newcommand{\lland}{\, \land \, }
\newcommand \seq[1]{{\left\langle{#1}\right\rangle}}
\newcommand\+[1]{\mathcal{#1}}
\newcommand{\wt}{\widetilde}
\newcommand{\RA}{\Rightarrow}
\newcommand{\range}{\ensuremath{\mathrm{range}}}
  \DeclareMathOperator{\High}{High}
  \newcommand{\CR}{\mbox{\rm \textsf{CR}}}
\DeclareMathOperator{\KP}{\textit{K}\,}
\DeclareMathOperator{\KS}{\textit{C}\,}
\DeclareMathOperator{\KPt}{\textit{K}}
\DeclareMathOperator{\m}{\mathbf{m}}
\newcommand{\leK}{\mathrel{\le_{\textrm{LK}}}}
\newcommand{\leMLR}{\mathrel{\le_{\textrm{LR}}}}
\newcommand{\cnd}{\,|\,}
\begin{document}

\title{Logic Blog 2014}

 \author{Editor: Andr\'e Nies}

\maketitle


 {
The logic blog is for
\bi \item Rapid announcement of new results in logic
\item putting up results and their proofs for further research
\item archiving results that fit nowhere else
\item sandbox for later publication in a journal.   \ei

Some time after the year has ended, the blog is finalised and  posted on arXiv.
\vsp
\begin{tabbing} 
   \href{http://arxiv.org/abs/1403.5719}{Logic Blog 2013} \ \ \ \   \= (Link: \texttt{http://arxiv.org/abs/1403.5719})  \\

    \href{http://arxiv.org/abs/1302.3686}{Logic Blog 2012}  \> (Link: \texttt{http://arxiv.org/abs/1302.3686})   \\

 \href{http://arxiv.org/abs/1403.5721}{Logic Blog 2011}   \> (Link: \texttt{http://arxiv.org/abs/1403.5721})   \\

 \href{http://dx.doi.org/2292/9821}{Logic Blog 2010}   \> (Link: \texttt{http://dx.doi.org/2292/9821})  
     \end{tabbing}

\vsp

\n {\bf How does the blog work?}

\vsp

\n {\bf Writing and editing.}  The source files are in a shared dropbox.
 Ask \email{andre@cs.auckland.ac.nz}    to gain access.

\vsp

\n {\bf Citing.}  Postings can be cited.  An example of a citation is:

\vsp

\n  H.\ Towsner, \emph{Computability of Ergodic Convergence}. In  Andr\'e Nies (editor),  Logic Blog, 2012, Part 1, Section 1, available at
\url{http://arxiv.org/abs/1302.3686}.}

\vsp

\n {\bf Announcements on the wordpress front end.}  The Logic Blog has a \href{http://logicblogfrontend.hoelzl.fr/
}{front end}  managed by Rupert H\"olzl.   

\n (Link: \texttt{http://logicblogfrontend.hoelzl.fr/})

\vsps
When you post source on the logic blog in the dropbox, you can post a comment on the front end alerting the community, and possibly summarising the result in brief.  The front end is also good for posting questions. It allows MathJax.
 
%
\newpage
\tableofcontents

\part{Randomness via algorithmic tests}

\section{CCR 2014: Open questions}
Written by Rupert H\"olzl and Andr\'e Nies in June 2014.

\subsection{Relativising computable randomness}
Let $\CR$ denote the class of computably random subsets of $\NN$; these are the  sets on which no computable  martingale succeeds.  Let $\MLR$ denote the class of \ML\  random sets.  
\begin{question}[van Lambalgen's theorem for $\CR$]
Let $X$ be computably random, and let  $Y$ be computably random relative to $X$. Is $X \oplus Y$ computably random?
\end{question}

\begin{question}[Joseph Miller]
Let $X$ be computably random. Is there an incomputable oracle $A$ such that $X$ is computably random relative to $A$?
\end{question}
Let $\High(\CR, \MLR)$ denote the class of oracles $A$ such that $\CR^A \sub \MLR$. Every PA complete set is in $\High(\CR, \MLR)$ by \cite[Lemma 5.1]{Hirschfeldt.Nies.ea:07} and \cite{FSY11}.
\begin{question}[Joseph Miller, \cite{FSY11}, implicit in \cite{Hirschfeldt.Nies.ea:07}] Is every set in  $\High(\CR, \MLR)$ PA-complete?
 \end{question}

We write $A \le_{CR} B$ if $\CR^B \sub \CR^A$. Clearly $\leT$ implies $\le_{\CR}$.
\begin{question}[Nies] Is  $\le_{CR}$ equivalent to $\leT$?
\end{question}
Miyabe has provided an affirmative answer to an analogous question for Schnorr randomness.


\subsection{Lowness for Martin-Löf randomness}

We discuss two superclasses of the low for ML-random sets, and ask whether they are strictly larger.

The first  can be seen as an analog in computability theory of the notion of null-additivity in set theory (see \cite[Ch.\ 2]{Bartoszynski.Judah:book}).  Let $A \triangle B$ denote symmetric difference of sets $A,B$.
\begin{definition}[Kihara]
We say that $A\sub \NN $ is \emph{ML-additive} if $Z \in \MLR$ implies $Z \triangle A\in \MLR$ for each $Z$. 
\end{definition}

Clearly, if a set is  low for ML-randomness then it  is ML-additive.
\begin{question}
Is every ML-additive oracle low for ML-randomness?
\end{question}
We could replace the map $X \to X \triangle A$ by a general  $A$-computable measure preserving bijection $\Phi$ on Cantor Space. We say $A$ is \emph{$\Phi$-preserving} if  $Z \in \MLR$ implies $\Phi(Z) \in \MLR$ for each $Z$. If  $A$ is low for ML-randomness, then it is $\Phi$-preserving for each such $\Phi$. It is open whether the converse holds.  

Kjos-Hanssen's intuition is that a sufficiently sparse $A$ might be ML-additive. In this connection, whether, to the contrary,   each ML-random  $Z$ is derandomized by arbitrarily fast growing functions: for each function $g$, is there $f$ dominating $g$ such that $Z - \range (f)$ is not ML-random?

Kihara and Miyabe have   recent  results characterising   additivity for Schnorr randomness.  For instance, they show this coincides with lowness for uniform Schnorr randomness.

We say that $A$ is low   for a ML-random set $R$ if $R$ is   ML-random relative to  $A$. The second class strengthens the property of being  low for $R$ . 
\begin{definition}[Yu]
Let $A\sub \NN$ and let   $R$ be ML-random.  We say that   $A$ is \emph{absolutely low for}   $R$ if for all sets $Y$,
\[R \in \MLR^Y \rightarrow R\in\MLR^{Y\oplus A}.\]
\end{definition}

Every low for ML-random is low for $\Omega$.
 
\begin{fact}[Stephan and Yu] Let  $A$ be $K$-trivial. Then $A$ is absolutely low for $\Omega$. 
\end{fact}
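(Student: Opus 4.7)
I would argue by contradiction, externalizing a $Y \oplus A$-test that captures $\Omega$ into a $Y$-test that also captures it, via the cost-function characterization of $K$-triviality. So suppose $A$ is $K$-trivial and $\Omega \in \MLR^Y$, and assume toward contradiction that $\Omega \notin \MLR^{Y \oplus A}$, witnessed by a $Y \oplus A$-ML test $(V_n^{Y \oplus A})$ with $\Omega \in \bigcap_n V_n^{Y \oplus A}$. My goal is to construct a $Y$-ML test $(W_n^Y)$ that also captures $\Omega$, contradicting the hypothesis.

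First, I would fix a $\Delta^0_2$ approximation $(A_s)$ of $A$ obeying the standard cost function $c(x,s) = \sum_{x < y \leq s} 2^{-K_s(y)}$, available by Nies's characterization of $K$-triviality (so that $\sum_s c(x_s, s+1) < \infty$, where $x_s$ is the least position changed at stage $s+1$). At each stage $s$, I would enumerate into the $Y$-c.e.\ set $W_n^Y$ the strings output in $s$ steps by $V_n^{Y \oplus A_s}$, stratified by the largest position queried in $A_s$. When $A_s$ changes at a position $x$, the strings enumerated at earlier stages using an $A_s$-query at a position $\geq x$ become ``waste''. The plan is to bound this waste at each change by a constant times $c(x_s, s+1)$; summing, the total waste is finite, so $(W_{n+c_0}^Y)$ is a genuine $Y$-ML test for a suitable constant $c_0$. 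Since $\Omega \in \bigcap_n V_n^{Y \oplus A}$ and $A_s$ stabilizes on any fixed finite initial segment, $\Omega \in \bigcap_n W_n^Y$, which is the desired contradiction.

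The hard part will be justifying the waste-per-change bound: I need to show the measure of strings enumerated using queries $A_s(y)$ for $y$ near the change position $x$ is bounded term-by-term by $2^{-K_s(y)}$. This is the relativized analog of Nies's golden-run lemma (at the core of the original proof that $K$-trivial equals low for ML-random), complicated here by the test's $Y$-oracle, since the natural cost function is unrelativized while the test is $Y \oplus A$-c.e. I expect the left-c.e.\ structure of $\Omega$ to be crucial here: since a computable approximation $\Omega_s \to \Omega$ from below tracks the ``coverage of $\Omega$'' portion of the test computably, the stratification and waste bounds can be decoupled from $Y$, letting the unrelativized cost function $c(x, s)$ govern the total waste uniformly.
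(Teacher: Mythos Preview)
Your approach has a real gap at exactly the point you flag. The bound you need --- that the $\mu$-measure of strings enumerated into $V_n^{Y\oplus A_s}$ via queries to $A_s$ at positions $\ge x$ is controlled by the unrelativized cost $c(x,s)$ --- simply does not follow from the cost-function characterization. The quantity you must bound is $Y$-c.e., while $c(x,s)=\sum_{x<y\le s}2^{-K_s(y)}$ knows nothing about $Y$; there is no termwise inequality of the form ``measure used via query $y$ is at most $2^{-K_s(y)}$'', since with the help of $Y$ a single $A$-query can trigger arbitrarily large enumeration. Your appeal to the left-c.e.\ structure of $\Omega$ does not close this: knowing when $\Omega_s$ enters the test tells you about coverage, not about the total measure of $W_n^Y$, and it is the latter that must be bounded for $(W_n^Y)$ to be a test at all. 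In effect you are proposing a relativized golden-run argument, but the golden run already does not work by a naive stratify-and-sum even when $Y=\emptyset$, and the presence of $Y$ removes any link between the waste and the unrelativized $K$.

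The paper's argument is completely different and sidesteps this. Given $Y$ low for $\Omega$, apply the low-for-$\Omega$ basis theorem relative to $Y$ to get $C\ge_T Y$ which is PA-complete relative to $Y$ and still low for $\Omega$; a result of Simpson and Stephan then yields $A\le_T C$ for every $K$-trivial $A$, so $A\oplus Y\le_T C$ is low for $\Omega$. This is a three-line proof from existing machinery, with no test-building. It is also worth noting that what your construction would actually establish, if it worked, is $\MLR^{Y\oplus A}=\MLR^Y$ (equivalently $K^{Y\oplus A}=K^Y+O(1)$) for the relevant $Y$ --- precisely the ``super-absolutely low for $\Omega$'' property the paper poses as an open question immediately after this Fact, settled only later by Greenberg--Miller--Monin--Turetsky with genuinely new ideas.
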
 
\begin{proof} Suppose $Y$ is low for $\Omega$. By the low for $\Omega$ basis theorem relative to $Y$, there is $C \ge_T Y$ such that $C$ is low for $\Omega$ and also $C$ is PA-complete relative to $Y$. By a recent result of Simpson and Stephan, this implies that $A \leT C$. Therefore $A \oplus Y$ is low for $\Omega$. 
\end{proof} 
We ask whether  the converse holds.
\begin{question}[Yu]
Is every set that is absolutely low for $\Omega$   $K$-trivial?
\end{question}

\begin{definition}[Merkle,Yu]
We say an oracle $A$ is super-absolutely low for $\Omega$  if for each oracle  $Y$ which is low for $\Omega$,
\[\forall n(K^Y(n)=K^{A\oplus Y}(n)+O(1)).\]
\end{definition}

Note that every such set  is  absolutely   low for $\Omega$ by definition. In fact it  must be  low for $K$, by letting $Y= \ES$. We ask the converse.

\begin{question}[Merkle,Yu]
Is every low for  $K$ set super-absolutely low for~$\Omega$? 
\end{question}

\n {\it Note (Apr. 2015) The previous two questions have now been solved in the affirmative by Greenberg, Miller, Monin, and Turetsky in a paper called ``Two more characterizations of the $K$-trivials", to appear in NDJFL.}

\subsection{Descriptive string complexity}

We say that $A \sub \NN$ is \emph{infinitely often $K$-trivial} if $\exists^\infty n \, K(X \upharpoonright n) \le^+ K(n)$.   For a reference see \cite{Barmpalias.Vlek:11}, who show for instance that every truth-table degree contains a set of this kind. Bauwens has recently shown that i.o.\ $K$-trivial and i.o.\ $C$-trivial are incomparable notions.

\begin{question}[Barmpalias] Suppose that $\forall r \exists n > r \, K(X \upharpoonright n) \le^+ K(r,n)$. Is $X$ infinitely often $K$-trivial? \end{question}

\section{Turetsky: Partial martingales for computable measures}

By Daniel Turetsky, March 2014.

Recall that martingales are only required to be defined on cylinders of positive measure.

\begin{definition}
If $\mu$ is a probability measure on $2^\w$, a {\em martingale with respect to $\mu$} is a partial function $M: 2^{<\w} \to [0, \infty)$ satisfying:
\begin{itemize}
\item For all $\sigma \in 2^{<\w}$, $M(\sigma* 0)\mu(\sigma* 0) + M(\sigma* 1)\mu(\sigma* 1) = M(\sigma)\mu(\sigma)$.
\item For all $\sigma \in 2^{<\w}$, if $\mu([\sigma]) > 0$, then $M(\sigma)$ is defined.
\end{itemize}
\end{definition}

Jason Rute  \cite{Rute:14}  asked if we need to allow martingales to be partial like this for computable randomness.  The answer is yes.

\begin{theorem}
There is a computable probability measure $\mu$ and an $X \in 2^{\w}$ which is not computably $\mu$-random, yet no total computable $\mu$-martingale succeeds on $X$.
\end{theorem}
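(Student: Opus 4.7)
\emph{Plan.}  I would simultaneously construct a computable purely atomic probability measure $\mu = \sum_{k\in\NN} 2^{-k-1} \delta_{Y_k}$ on a computable sequence of pairwise distinct points $(Y_k)$, together with a non-computable $\Delta^0_2$ point $X$ that is the limit of the $Y_k$ with $\mu(\{X\}) = 0$.  The $Y_k$ will be arranged so that each one agrees with $X$ on some initial segment of length $n_k$ and then splits off in the opposite direction, with $n_k \to \infty$.  Then $\mu(X\upharpoonright n) = \sum_{k\,:\, n_k \ge n} 2^{-k-1}$ is positive for every $n$ but tends to $0$, so there is room for a martingale to diverge along $X$.

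A natural partial computable $\mu$-martingale witnessing that $X$ is not computably $\mu$-random would be
\[
N(\sigma) = \begin{cases} 1/\mu(\sigma) & \text{if some $Y_k$ with $n_k \ge |\sigma|$ extends $\sigma$,} \\ 0 & \text{if some $Y_k$ with $n_k < |\sigma|$ extends $\sigma$.} \end{cases}
\]
Both clauses are $\Sigma^0_1$ in the enumeration of the $Y_k$, and by construction they partition the positive-measure cylinders into ``spine prefixes'' and ``decoy tails'' respectively, so $N$ is partial computable on its intended domain.  Fairness is a direct case check using $\mu(\sigma) = \mu(\sigma * 0) + \mu(\sigma * 1)$, and $N(X\upharpoonright n) = 1/\mu(X\upharpoonright n) \to \infty$ gives success on $X$.

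To force that no total computable $\mu$-martingale succeeds on $X$, I would run a finite-injury priority construction against an enumeration $M_0, M_1, \ldots$ of candidate total computable $\mu$-martingales.  For each requirement $R_e : \liminf_n M_e(X\upharpoonright n) < \infty$ reserve an infinite tail of decoys of total mass $2^{-k_e}$.  At a stage attacking $R_e$ with current spine $\tau = X\upharpoonright m$, compute $M_e(\tau * 0)$ and $M_e(\tau * 1)$ and appeal to fairness $\mu(\tau*0) M_e(\tau*0) + \mu(\tau*1) M_e(\tau*1) = \mu(\tau) M_e(\tau)$: convexity yields a child $b \in \{0,1\}$ with $M_e(\tau * b) \le M_e(\tau)$.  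Extend $X$ through $\tau * b$ and commit the next reserved decoy from the $R_e$-tail to branch off at level $m$ on side $1-b$, producing the very $\mu$-values that made the fairness computation work.

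The main obstacle is threading this priority argument so that $\mu$ remains computable and $N$ remains uniformly well-defined across all stages.  Reserving a disjoint decoy-tail for each requirement and fixing each decoy's details at a definite stage handles the computability of $\mu$; the $\Sigma^0_1$ certifications used by $N$ are preserved because the defining conditions on each $\sigma$ depend only on finitely committed decoys.  The conceptual asymmetry driving the separation is that $N$'s recognition of the spine is $\Sigma^0_1$ (hence safe for a partial algorithm), whereas any total $M_e$ is forced to assign computable values on cylinders $\sigma$ that cannot be certified as measure-zero (a $\Pi^0_1$ condition); this prevents $M_e$ from systematically mimicking $N$ along the non-computable spine $X$, so that the priority construction succeeds in keeping each $M_e$ bounded there.
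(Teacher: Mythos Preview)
Your plan for the atomic measure and the partial martingale $N$ is sound, but the diagonalization against total $\mu$-martingales has a genuine gap. The one-step move you describe---at a level $m$ dedicated to $R_e$, pick the child $b$ with $M_e(\tau*b)\le M_e(\tau)$---only prevents $M_e$ from increasing at \emph{that} step. At a level dedicated to some other $R_{e'}$, fairness alone allows $M_e$ to grow along the spine by a factor as large as $\mu(X\upharpoonright m)/\mu(X\upharpoonright(m+1))$, and the product of these factors over all levels telescopes to $1/\mu(\{X\})=\infty$. So even if you attack $R_e$ at infinitely many levels $m_0<m_1<\cdots$ and secure $M_e(X\upharpoonright(m_j+1))\le M_e(X\upharpoonright m_j)$ each time, the values $M_e(X\upharpoonright m_j)$ themselves can drift to infinity between visits, and you do not obtain $\liminf_n M_e(X\upharpoonright n)<\infty$. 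No scheduling of the $R_e$'s rescues this: the growth budget that must be absorbed at the levels \emph{not} dedicated to $R_e$ is always infinite.

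The paper's construction avoids controlling the $M_i$ one at a time. For each guess $\tau\in 2^{<\omega}$ about which $M_i$ are total $\mu$-martingales it forms the weighted sum $M_\tau=\sum_{\tau(i)=1}\epsilon(\tau\upharpoonright i)M_i$, and at its coding location $\sigma_\tau$ it \emph{threatens} $\mu(\sigma_\tau*b_\tau)=0$ (declaring $\mu\in[0,2^{-s}]$ there, which keeps $\mu$ computable and excuses $N$ from converging above that node) while waiting for $M_\tau$ to become defined on all extensions of a fixed further length. Once the wait ends, $\mu$ is made uniform on that block and a Kolmogorov-inequality count produces two extensions on which $M_\tau$ has grown by at most $2^{-2|\tau|}$; along the true path the total growth of $M_\tau$---and hence of every constituent $M_i$---stays below a fixed constant. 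Your single-bit convexity choice, applied to one $M_e$ at a time after the split of $\mu(\tau)$ is already forced by the atom you commit, cannot replicate this simultaneous control, and invoking finite injury (moving on when $M_e$ has not yet halted and returning later) only aggravates the problem, since by the time you return $M_e$ may already be enormous along the spine you have irrevocably committed to.
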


\begin{proof}
We build $\mu$ and a computable $\mu$-martingale $N$.  The real $X$ will then be a sort of ``true path'' through $N$.  Let $\seq{M_i}_{i \in \w}$ be an enumeration of all potential $\mu$-martingales.  That is, these are partial computable maps $M_i: 2^{<\w} \times \w \to \QQ_{\geq 0}$ such that for every $\sigma \in 2^{<\w}$, the sequence $\seq{M_i(\sigma, n)}_{n \in \w}$ is a (partial) Cauchy name for a non-negative real.  We will abuse notation and refer to $M_i$ as a partial map from $2^{<\w}$ to $\RR_{\geq 0}$.

During the construction, we will choose various positive rationals $\epsilon(\tau)$ for $\tau \in 2^{<\w}$.  We will define
\[
M_\tau = \sum_{\tau(i) = 1} \epsilon(\tau\uhr{i}) M_i.
\]
Thus each $M_\tau$ is a partial computable function, and if $\tau_0 \subseteq \tau_1$, $M_{\tau_0} \leq M_{\tau_1}$.  Note that $M_{\seq{}}$ is identically 0, and in particular is total computable.

If $Y = \{ i \in \w : \text{$M_i$ is a total computable $\mu$-martingale}\}$, our intention is to ensure that $\liminf_{n\to\infty} M_\tau(X\uhr{n}) < \infty$ for all $\tau \prec Y$.  By the savings trick, this will suffice to show that no total computable $\mu$-martingale succeeds on $X$.

\

{\em Strategy for $\tau$:}

Each $\tau \in 2^{<\w}$ will have a {\em coding location} $\sigma_\tau \in 2^{<\w}$, and will have a {\em preferred bit} $b_\tau \in \{0, 1\}$.  We will threaten to make $\mu(\sigma_\tau* b_\tau) = 0$.  The important point is that while we are threatening, we are under no obligation to define $N$ along extensions of $\sigma_\tau* b_\tau$.  Nor must we make much in the way of definitions for $\mu$ along proper extensions of $\sigma_\tau* b_\tau$.  Of course, we must uniformly give a Cauchy name for $\mu(\sigma)$ for all $\sigma$, but we can do this above $\sigma_\tau* b_\tau$ without imposing any actual restraint.

We explain what is meant by the above.  Our threatening to make $\mu(\sigma_\tau* b_\tau) = 0$ takes the form of declaring $\mu(\sigma_\tau* b_\tau) \in [0, 2^{-s}]$ at stage $s$.  Note that this fulfills all of our obligations to make a Cauchy name.  We can simultaneously declare $\mu(\rho) \in [0, 2^{-s}]$ for all $\rho \supset \sigma_\tau* b_\tau$, and this also meets our obligations to make Cauchy names.  However, this second declaration carries no new information; it is entirely implied by the fact that $\mu(\sigma_\tau* b_\tau) \in [0, 2^{-s}]$.  If at some stage $s$ we choose to stop threatening and instead declare $\mu(\sigma_\tau* b_\tau) = 2^{-s}$, our previous declarations for the $\mu(\rho)$ impose no restraint; we are as free to define $\mu(\rho)$ as we would be if we had made no declarations for it at all.  In particular, any $\rho \supset \sigma_\tau* b_\tau$ and any $b$ are available as our next coding location once we decide to stop threatening at $\sigma_\tau* b_\tau$.  So we will choose coding locations where $M_\tau$ has not gained much capital.

We now describe the strategy for $\tau$, given $\sigma_\tau$ and $b_\tau$.  Our strategy will be based on several inductive assumptions:
\begin{itemize}
\item $\mu(\sigma_\tau)$ has already been defined.
\item $N(\sigma_\tau* b_\tau)$ has already been defined.
\item For all $i < |\tau|-1$ with $\tau(i) = 1$, $\epsilon(\tau\uhr{i})$ is already defined.  Thus $M_{\tau^-}$ is already defined.  If $\tau(|\tau|-1) = 1$, $M_\tau$ is not yet defined because we lack $\epsilon(\tau^-)$, but this is the only missing ingredient, while if $\tau(|\tau|-1) = 0$, $M_\tau$ is already defined.
\item $M_{\tau^-}(\sigma_\tau* b_\tau) < 2$.
\end{itemize}

The strategy is as follows:
\begin{enumerate}
\item Begin threatening $\mu(\sigma_\tau * b_\tau) = 0$.  That is, at stage $s$, for every $\rho \supseteq \sigma_\tau* b_\tau$, declare $\mu(\rho) \in [0, 2^{-s}]$.  Also declare $\mu(\sigma_\tau * (1 - b_\tau)) \in [\mu(\sigma_\tau) - 2^{-s}, \mu(\sigma_\tau)]$.  The extensions $\rho \supset \sigma_\tau*(1 - b_\tau)$ are irrelevant, so make declarations for $\mu(\rho)$ consistent with the uniform distribution above $\sigma_\tau*(1-b_\tau)$.  The strategy moves on to the next step, but this process continues until stated otherwise.
\item If $\tau(|\tau|-1) = 0$, skip to Step~\ref{skip_to_step}.
\item Wait for $M_{|\tau|-1}$ to make a declaration that $M_{|\tau|-1}(\sigma_\tau* b_\tau) < a$ for some positive rational $a$.
\item\label{epsilon_define_step} Define $\epsilon(\tau^-) = \frac{2^{-2|\tau|}}{a}$.
\item\label{skip_to_step} Wait for $M_\tau$ to be defined on all extensions of $\sigma_\tau* b_\tau$ of length $|\sigma_\tau|+1 + 2|\tau| + 2$.
\item Cease threatening $\mu(\sigma_\tau * b_\tau) = 0$, and instead declare $\mu(\sigma_\tau * b_\tau) = 2^{-s}$, while $\mu(\sigma_\tau* (1 - b_\tau)) = \mu(\sigma_\tau) - 2^{-s}$.  Continue to extend $\mu$ above $\sigma_\tau* (1 - b_\tau)$ uniformly (these strings are still irrelevant).
\item\label{found_strings_step} If there are two strings $\sigma_0$ and $\sigma_1$ both extending $\sigma_\tau* b_\tau$ and of length $|\sigma_\tau|+1 + 2|\tau|+2$, with $M_\tau(\sigma_j) < M_\tau(\sigma_\tau * b_\tau) + 2^{-2|\tau|}$ for $j \in \{0, 1\}$ and $\sigma_0^- \neq \sigma_1^-$, then do the following:
\begin{enumerate}
\item Declare $\sigma_{\tau* j} = \sigma_j^-$ and $b_{\tau* j} = \sigma_j(|\sigma_j|-1)$.
\item For all $\rho$ extending $\sigma_\tau* b_\tau$ with $\rho \not \supset \sigma_j^-$ for either $j$, define $\mu(\rho)$ uniformly.
\item Define $N(\sigma_j^-) = N(\sigma_j^-* 0) = N(\sigma_j^-* 1) = 2^{2|\tau|}N(\sigma_\tau* b_\tau)$ for both $j$, and define $N(\rho) = 0$ for all $\rho$ extending $\sigma_\tau* b_\tau$ but incomparable with both $\sigma_j^-$.  For all $\rho$ extending $\sigma_j^-* (1 - b_{\tau* j})$, define $N(\rho) = N(\sigma_j^-)$.
\item Begin the strategies for $\tau* 0$ and $\tau* 1$, and end the current strategy.
\end{enumerate}
\item\label{failure_step} If there are no such $\sigma_0$ and $\sigma_1$, define $\mu(\rho)$ and $N(\rho)$ uniformly on all extensions of $\sigma_\tau* b_\tau$ and end the current strategy.
\end{enumerate}

\

{\em Construction:}

Begin by letting $\sigma_{\seq{0}} = \seq{0}$, $b_\seq{0} = 0$, $\sigma_{\seq{1}} = \seq{1}$ and $b_\seq{1} = 0$.  Define $\mu(\seq{}) = 1$, $\mu(\seq{0}) = \mu(\seq{1}) = 1/2$.  Define $N(\rho) = 1$ for $|\rho| < 3$, and $N(\rho) = 1$ for $\rho$ with $\rho(1) = 1$.  Begin the strategies for $\seq{0}$ and $\seq{1}$.

\

{\em Verification:}

\begin{claim}
When we begin the strategy for $\tau$, our inductive assumptions are met.
\end{claim}

\begin{proof}
This is easily seen to hold for the base case $|\tau| = 1$.

An examination of Step \ref{found_strings_step} shows that $\mu(\sigma_\tau)$ and $N(\sigma_\tau* b_\tau)$ will both be defined before the strategy for $\tau$ is begun.  Inductively, by Step \ref{epsilon_define_step}, we know that $\epsilon(\tau\uhr{i})$ is defined for all $i < |\tau|-1$ with $\tau(i) = 1$.  It remains only to show that $M_{\tau^-}(\sigma_\tau* b_\tau) < 2$.

We argue by induction that $M_{\tau^-}(\sigma_\tau* b_\tau) < 2 - 2^{-|\tau^-|}$.  Since $M_\seq{}$ is identically 0, clearly this holds for $|\tau| = 1$.

If $\tau(|\tau|-1) = 0$, then
\begin{eqnarray*}
M_\tau(\sigma_\tau* b_\tau) &=& M_{\tau^-}(\sigma_\tau* b_\tau)\\
&<& 2 - 2^{-|\tau^-|}\\
&<& 2 - 2^{-|\tau|} - 2^{-2|\tau|},
\end{eqnarray*}
where the first inequality is by the inductive hypothesis.

If instead $\tau(|\tau|-1) = 1$, then by our action at Step \ref{epsilon_define_step}, we have that
\begin{eqnarray*}
M_{\tau}(\sigma_\tau* b_\tau) &=& M_{\tau^-}(\sigma_\tau* b_\tau) + \epsilon(\tau^-) M_{|\tau|-1}(\sigma_\tau* b_\tau)\\
&<& M_{\tau^-}(\sigma_\tau* b_\tau) + 2^{-2|\tau|}\\
&<& 2 - 2^{-|\tau^-|} + 2^{-2|\tau|}\\
&\leq& 2 - 2^{-|\tau|} - 2^{-2|\tau|},
\end{eqnarray*}
where the second inequality is by the inductive hypothesis.

If we begin the strategies for $\tau* 0$ and $\tau* 1$, then by our action at Step \ref{found_strings_step}, we know that $M_\tau(\sigma_{\tau* j}* b_{\tau* j}) < M_\tau(\sigma_\tau* b_\tau) + 2^{-2|\tau|}$, which completes the proof.
\end{proof}

\begin{claim}
$\mu$ is a total computable measure.
\end{claim}

\begin{proof}
By examination.  If we threaten forever at $\mu(\sigma_\tau* b_\tau)$, then our action for $\tau$ ensures that $\mu$ is defined on all extensions of $\sigma_\tau$.  If we stop threatening, then our action for $\tau$ ensures that $\mu$ is defined on all $\rho$ extending $\sigma_\tau$ with $\rho$ not extending one of $\sigma_{\tau* 0}$ or $\sigma_{\tau* 1}$.  We also begin the strategies for $\tau* 0$ and $\tau* 1$, who ensure that $\mu$ is defined on the extensions of $\sigma_{\tau*0}$ or $\sigma_{\tau*1}$, respectively.
\end{proof}

\begin{claim}
$N$ is a computable $\mu$-martingale.
\end{claim}

\begin{proof}
By examination.  If we threaten forever at $\mu(\sigma_\tau* b_\tau)$, then we are not required to converge on extensions of $\sigma_\tau * b_\tau$.  If we stop threatening and find $\sigma_0$ and $\sigma_1$, then our action at Step \ref{found_strings_step} defines $N$ for all $\rho$ extending $\sigma_\tau* b_\tau$ except those extending $\sigma_j$, and then we begin the strategies for $\tau* j$, who will take responsibility for $N$ above $\sigma_j$.  Since we extended $\mu$ uniformly, the definition given precisely satisfies the martingale condition.

If we stop threatening but do not find $\sigma_0$ and $\sigma_1$, then our action at Step \ref{failure_step} defines $N$ for all $\rho$ extending $\sigma_\tau* b_\tau$.  Since the extension is uniform, the definition satisfies the martingale condition.
\end{proof}

As before, let $Y = \{ i \in \w : \text{$M_i$ is a total computable $\mu$-martingale}\}$.
\begin{claim}
If $\tau \prec Y$, then we eventually begin the strategy for $\tau$.
\end{claim}

\begin{proof}
By induction on $|\tau|$.  Since $\tau \prec Y$, $M_\tau$ is a total computable $\mu$-martingale.

Clearly we eventually stop threatening at $\mu(\sigma_\tau* b_\tau)$.  It suffices to show that the desired $\sigma_0$ and $\sigma_1$ exist.  But suppose not.  Then for all but at most 1 string $\sigma$ of length $|\sigma_\tau| + 1 + 2|\tau| + 1$,
\[
M_\tau(\sigma) \geq M_\tau(\sigma_\tau* b_\tau) + 2^{-2|\tau|}.
\]
Since in this case we define $\mu$ uniformly by Step \ref{failure_step}, we have
\begin{eqnarray*}
\sum_{\sigma \in 2^{|\sigma_\tau|+1 + 2|\tau|+1}} \mu(\sigma) M_\tau(\sigma) &=& \sum_\sigma 2^{-(2|\tau|+1)}\mu(\sigma_\tau* b_\tau) M_\tau(\sigma)\\
&\geq& (2^{2|\tau|+1}-1) \mu(\sigma_\tau* b_\tau) (M_\tau(\sigma_\tau* b_\tau) + 2^{-2|\tau|})\\
&=& \mu(\sigma_\tau* b_\tau) M_\tau(\sigma_\tau* b_\tau) (2^{2|\tau|+1}-1) \left(1 + \frac{2^{-2|\tau|}}{M_\tau(\sigma_\tau* b_\tau)}\right).
\end{eqnarray*}
Recalling that $M_\tau(\sigma_\tau* b_\tau) < 2$, consider
\begin{eqnarray*}
(2^{2|\tau|+1}-1)(1 + 2^{-2|\tau|-1}) &=& 2^{2|\tau|+1} + 1 - 1 - 2^{-2|\tau|-1}\\
&=& 2^{2|\tau|+1} - 2^{-2|\tau|-1}\\
&\geq& 2^1 - 2^{-1}\\
&>& 1.
\end{eqnarray*}
But this contradicts the martingale condition.
\end{proof}

Now, let $X = \bigcup_{\tau \prec Y} \sigma_\tau$.  Since $\liminf M_\tau(X\uhr{n}) \leq 2$ for all $\tau \prec Y$, we see that no total computable $\mu$-martingale succeeds on $X$.  On the other hand, by construction we see that $N(\sigma_\tau) \geq 2^{2|\tau^-|}$ for all $\tau \prec Y$, and so $N$ succeeds on $Y$.
\end{proof}

\section{Kjos-Hanssen: Answer to a question of Brendle et al. 2014}
\subsection{The first question, Question 4 (6)}
	Here is a solution to Brendle, Brooke-Taylor, Ng and Nies \cite{Brendle.Brooke.ea:14}, Question 4 (6).
	Kjos-Hanssen and Stephan worked on this on June 16, 2014 in Singapore.
	$A$ is  weakly meager engulfing if it computes a meager set that contains all recursive reals. This is equivalent to being (high or DNR).
	\begin{thm}
		There is weakly meager engulfing set that does not compute a Schnorr random and is of hyperimmune degree.
	\end{thm}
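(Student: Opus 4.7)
The plan is to construct $A$ as the graph of a DNR function $f : \NN \to \NN$. Being DNR suffices for weakly meager engulfing, so the remaining tasks are to force $f$ to be of hyperimmune degree and to compute no Schnorr random. I will use a finite-extension construction of Kumabe--Lewis type, working with conditions $(\sigma, T)$ where $\sigma \in \NN^{<\NN}$ is DNR on its domain and $T$ is a recursive bushy tree of DNR extensions of $\sigma$, so that every path through $T$ extends $\sigma$ to a total DNR function.

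Two families of requirements will be met. The hyperimmunity requirements $H_e$: make $f(n) > \Phi_e(n)$ for some $n > |\sigma|$. These are easy inside any bushy condition: wait for the next $n$ with $\Phi_e(n)\DA$, then extend $\sigma$ DNR-ly up to length $n+1$, using the fact that infinitely many values are available at each coordinate. The Schnorr requirements $N_e$: build, in parallel with the construction, a uniformly computable family of Schnorr tests $(U_k^e)_{k \in \NN}$ with $\lambda(U_k^e) \leq 2^{-k}$ a computable real, such that if $\Phi_e^f$ is total then $\Phi_e^f \in \bigcap_k U_k^e$.

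The strategy for $N_e$ at the $k$-th sub-stage is a dichotomy applied to the current condition $(\sigma,T)$. Either some bushy refinement $T' \subseteq T$ confines $\Phi_e^f$ to a finite clopen set $W$ of rational measure $<2^{-k}$, or the image $\Phi_e[T]$ of bushy extensions spreads too widely to be so confined. In the first case I restrict the condition to $T'$ and declare $U_k^e := W$; in the second, a counting argument on bushy DNR-trees produces an extension $\tau \in T$ on which $\Phi_e^\tau(m)\uparrow$ for some $m$, and I descend below $\tau$ to force $\Phi_e^f(m)\uparrow$. Either way, along the final $f$, each $\Phi_e^f$ is partial or is captured by the $e$-th test, hence not Schnorr random.

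The main obstacle is ensuring that the $U_k^e$ are genuine Schnorr tests, i.e.\ their measures are computable reals rather than merely upper semi-computable. I handle this by committing, at the time of the dichotomy, to a finite clopen $W$ of known rational measure, and organizing the construction with a standard finite-injury priority arrangement, so that lower-priority actions cannot later enlarge any already-declared $U_k^e$. Given the combinatorial lemma that bushy DNR-trees are thick enough to support the dichotomy, the verification follows standard lines and yields an $f$ which is DNR, of hyperimmune degree, and computes no Schnorr random, as required.
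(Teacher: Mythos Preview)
There is a genuine gap in your handling of the Schnorr requirements. A Schnorr test $(U_k^e)_k$ must be a \emph{uniformly computable} sequence of $\Sigma^0_1$ classes whose measures are uniformly computable reals. Your tests, however, are determined by the outcomes of dichotomies decided inside the construction of $f$, and that construction cannot be computable: you are building a DNR function, and no DNR function is computable. Saying that each individual $U_k^e$ is a finite clopen set of rational measure does not help; the problem is that the map $(e,k)\mapsto U_k^e$ is only as effective as the construction, which here is at best $\mathbf{0}'$-effective. You are also mixing two incompatible paradigms: a Kumabe--Lewis finite-extension argument with conditions $(\sigma,T)$ is a $\mathbf{0}'$-oracle construction deciding one requirement at a time, not a computable finite-injury priority construction, and the phrase ``finite-injury priority arrangement'' does not transplant onto it. In Slaman's original argument this issue never arises because he is building Martin-L\"of tests, which are merely c.e.\ and can absorb non-computable choices by enumeration; Schnorr tests cannot.

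The paper sidesteps all of this by not building Schnorr tests at all. It uses the known characterization (Nies--Stephan--Terwijn) that a degree computes a Schnorr random if and only if it is high or computes a Martin-L\"of random. Slaman's construction from \emph{Comparing DNR and WWKL} already yields a DNR $G\le_T\mathbf{0}'$ (hence weakly meager engulfing and of hyperimmune degree automatically, so your $H_e$ requirements are unnecessary) that computes no ML-random. The only new ingredient is to interleave lowness requirements: at even stages ask whether some bushy tree of acceptable extensions of the current $G_s$ forces $e\in G'$, and if so take such an extension, otherwise add the nonexistence of such a tree to the acceptability conditions. This makes $G$ low, hence not high, and the characterization finishes the argument with no Schnorr test ever constructed by hand.
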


	We use a modification of Ted Slaman's construction from the warm-up section of the paper ``Comparing DNR and WWKL'' \cite{Kjos.Lempp.ea:04}.
	There, one finds a construction of a function $G$ which is DNR (hence weakly meager engulfing), recursive in $0'$ (hence hyperimmune), and computes no ML-random.
	To show it computes no Schnorr random it thus suffices to modify the construction so that $G$ is not high. We do this by in fact making $G$ low.

	Ted Slaman mentioned (private communication in 2003 or 2004) that he thought the construction could be made low, but did not tell the proof. It turns out  to be fairly easy:

	Recall that $G$ is a union of strings $G_s$. At stages $2e+1$ we will take care of the requirements as in the original construction (making sure $G$ computes no ML-random).
	At stages $s=2e$ we work on the lowness requirements. We ask, is there a good or ``bushy'' tree starting from $G_s$ consisting of strings making $e$ in the jump of $G$?
	If so, take an element of such a tree that's also acceptable (which will always exist) and use it as $G_{s+1}$.
	If not, add to the list of acceptability requirements that there should not exist such a tree starting from any future $G_t$.
	This way we decide at say stage $2e$ whether $e$ is in the jump of $G$, hence in the end $G$ is low.  The acceptability also ensures that $G$ is DNR.

\section{Higuchi: Injective Images of Computable Randomness}

(By Kojiro Higuchi at Chiba University, Japan.) We give a short and self-contained  proof of a result that can be seen  as a  special case of Rute~\cite[Thm.\ 7.9]{Rute:14}.   Let us use $\mu$ to denote the fair coin measure on the Cantor space $\{0,1\}^\omega$. 
This paper gives a theorem such that the image of a computable random real under computable injection $\Phi$ over the Cantor space $\{0,1\}^\omega$ is computable random with respect to the measure $\mu\circ\Phi^{-1}$.   

\begin{definition}
Let $\Phi:\{0,1\}^\omega\to\{0,1\}^\omega$ be a computable function. 
A function $b:\{0,1\}^{<\omega}\to\mathbb{R}_{\ge 0}$ is called {\it $\Phi$-martingale} if 
$b$ satisfies the equation 
$$b(\tau)\mu(\Phi^{-1}(\tau\{0,1\}^\omega))=b(\tau 0)\mu(\Phi^{-1}(\tau 0\{0,1\}^\omega))+b(\tau 1)\mu(\Phi^{-1}(\tau1\{0,1\}^\omega))$$
for all $\tau\in\{0,1\}^{<\omega}$. 
We define 
$${\rm Succ}(b)=\{Y\in\{0,1\}^\omega:\limsup_{n\to\infty}b(Y\upharpoonright n)=\infty\}.$$ 
We say that $b$ {\it succeeds} on $Y$ if $Y\in{\rm Succ}(b)$. 
\end{definition}

\begin{definition}
Let $b:\{0,1\}^{<\omega}\to\mathbb{R}_{\ge 0}$ be a $\Phi$-martingale for a computable function $\Phi:\{0,1\}^\omega\to\{0,1\}^\omega$. 
We say that $b$ satisfies the {\it saving property} if 
for all $k\in\omega$ and all $Y\in{\rm Succ}(b)$, 
there exists $\tau\subsetneq Y$ such that 
$b(\rho)\ge k$ for any $\rho\in\tau\{0,1\}^{<\omega}$. 
\end{definition}

We do not give a proof of the following lemma which tells us that we can always modify a given $\Phi$-computable martingale to make it have the saving property without changing its succeeding set. 
A proof can be given just as a proof for the usual computable martingale. 

\begin{lemma}\label{L1}
For any computable $\Phi:\{0,1\}^\omega\to\{0,1\}^\omega$ and any computable $\Phi$-martingale $b:\{0,1\}^{<\omega}\to\mathbb{R}_{\ge 0}$, 
there exists a computable $\Phi$-martingale $b^\prime:\{0,1\}^{<\omega}\to\mathbb{R}_{\ge 0}$ with the saving property such that 
${\rm Succ}(b)={\rm Succ}(b^\prime)$. 
\qed
\end{lemma}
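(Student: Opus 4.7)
The plan is to import the standard savings trick for computable martingales, viewing a $\Phi$-martingale as a martingale with respect to the pushforward measure $\nu(\tau) := \mu(\Phi^{-1}(\tau\{0,1\}^\omega))$, which satisfies $\nu(\sigma) = \nu(\sigma 0) + \nu(\sigma 1)$. By rescaling I may assume $b$ takes value $1$ at the empty string.

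For each $k \geq 0$ I would build a ``stopped'' companion $b^{(k)}$ that follows $b$ until $b$ first reaches $2^{k+1}$ along the path and then freezes at that value. Explicitly: if there is a shortest prefix $\tau \preceq \sigma$ with $b(\tau) \geq 2^{k+1}$, call it $\sigma_0^k$ and set $b^{(k)}(\sigma) := b(\sigma_0^k)$; otherwise set $b^{(k)}(\sigma) := b(\sigma)$. A short case analysis confirms that $b^{(k)}$ is a $\Phi$-martingale: before the crossing it coincides with $b$ and inherits $b$'s identity, while after the crossing $b^{(k)}$ is constant on the cone above $\sigma_0^k$ and the martingale identity collapses to $\nu(\sigma) = \nu(\sigma 0) + \nu(\sigma 1)$.

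Now set $b'(\sigma) := \sum_{k \geq 0} 2^{-k-1} b^{(k)}(\sigma)$, which is again a $\Phi$-martingale by linearity. I would verify the two needed properties in tandem. Fix $Y \in {\rm Succ}(b)$ and any $k \in \omega$. Since $b$ succeeds on $Y$, there is a prefix $\tau \prec Y$ with $b(\tau) \geq 2^{k+1}$; for every $\rho \succeq \tau$ and every $j \leq k$, the threshold $2^{j+1}$ has already been crossed by some prefix of $\tau$, so $b^{(j)}(\rho) \geq 2^{j+1}$ and hence
\[
b'(\rho) \;\geq\; \sum_{j=0}^{k} 2^{-j-1} \cdot 2^{j+1} \;=\; k+1 \;\geq\; k.
\]
This simultaneously gives the saving property for $b'$ and $Y \in {\rm Succ}(b')$. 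Conversely, if $b$ is bounded by some $B$ along $Y$, then $b^{(k)}(Y \upharpoonright n)$ is always a value $b(\sigma)$ for some prefix $\sigma \preceq Y \upharpoonright n$, hence also bounded by $B$, so $b'(Y \upharpoonright n) \leq B$ and $Y \notin {\rm Succ}(b')$. Thus ${\rm Succ}(b) = {\rm Succ}(b')$.

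The only delicate point is computability: the test ``$b(\tau) \geq 2^{k+1}$'' is only semi-decidable from the Cauchy name for $b(\tau)$, so a naive implementation could stall when $b(\tau) = 2^{k+1}$ exactly. This is the step that makes the proof ``just as a proof for the usual computable martingale'', and it is handled in the standard way—for instance by perturbing to a computable threshold that $b$ provably avoids, or by formalizing ``first crossing'' as a uniformly c.e.\ stopping time—and neither fix alters the estimates above.
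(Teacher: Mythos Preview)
Your proposal is correct and is precisely the approach the paper has in mind: the paper does not actually give a proof of this lemma, stating only that ``a proof can be given just as a proof for the usual computable martingale'' and closing with a \qed. Your argument is the standard savings trick, carried out for martingales with respect to the pushforward measure $\nu(\tau)=\mu(\Phi^{-1}([\tau]))$, and your acknowledgement that the computability of the crossing times needs the usual perturbation fix is exactly the caveat one expects; since $\Phi$ is total computable on Cantor space, $\nu$ is a computable (indeed dyadic-rational-valued) measure, so the standard fixes go through unchanged.
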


\begin{lemma}\label{L2}
Let $\Phi:\{0,1\}^\omega\to\{0,1\}^\omega$ be a computable function and let $b:\{0,1\}^{<\omega}\to\mathbb{R}_{\ge 0}$ be a $\Phi$-martingale. 
Suppose that the limit 
$$b^\prime(\sigma)=\lim_{n\to\infty}\sum_{\tau\in\{0,1\}^n}b(\tau)\dfrac{\mu(\sigma\{0,1\}^\omega\cap\Phi^{-1}(\tau\{0,1\}^\omega))}{\mu(\sigma\{0,1\}^\omega)}$$
exists for all $\sigma\in\{0,1\}^{<\omega}$. 
Then $b^\prime$ is a martingale. 
Furthermore, 
$$\Phi^{-1}({\rm Succ}(b))\subset{\rm Succ}(b^\prime)$$ 
holds if $b$ satisfies the saving property. 
\end{lemma}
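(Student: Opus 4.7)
The proof splits into two parts, matching the two assertions. For the first, I would verify the martingale identity $b'(\sigma)=\tfrac12(b'(\sigma 0)+b'(\sigma 1))$ at the level of each partial sum in the defining limit. Writing $S_n(\sigma)$ for the $n$-th partial sum and using the disjoint decomposition $\sigma\{0,1\}^\omega=\sigma 0\{0,1\}^\omega\sqcup\sigma 1\{0,1\}^\omega$ together with $\mu(\sigma 0\{0,1\}^\omega)=\mu(\sigma 1\{0,1\}^\omega)=\tfrac12\mu(\sigma\{0,1\}^\omega)$, one gets $S_n(\sigma)=\tfrac12(S_n(\sigma 0)+S_n(\sigma 1))$ for every $n$; passing to the limit (which exists by hypothesis for $\sigma$, $\sigma 0$ and $\sigma 1$) gives the identity for $b'$.

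For the inclusion $\Phi^{-1}({\rm Succ}(b))\subseteq{\rm Succ}(b')$, fix $X$ with $Y:=\Phi(X)\in{\rm Succ}(b)$ and an arbitrary $k\in\omega$. Apply the saving property to $Y$ to obtain $\tau\subsetneq Y$ with $b(\rho)\ge k$ for every $\rho\in\tau\{0,1\}^{<\omega}$. The key geometric observation is that $\Phi$, being a total computable, hence continuous, map on the compact space $\{0,1\}^\omega$, sends clopen preimages to clopen sets: $\Phi^{-1}(\tau\{0,1\}^\omega)$ is clopen, hence a finite union of basic cylinders. Since $X$ lies in it, there exists $m_0$ such that for all $m\ge m_0$, the whole cylinder $(X\upharpoonright m)\{0,1\}^\omega$ is contained in $\Phi^{-1}(\tau\{0,1\}^\omega)$.

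For such $m$ and any $n\ge|\tau|$, I would split $S_n(X\upharpoonright m)$ according to whether $\tau'\in\{0,1\}^n$ extends $\tau$ or is incomparable with it. Incomparable terms vanish, because $(X\upharpoonright m)\{0,1\}^\omega\cap\Phi^{-1}(\tau'\{0,1\}^\omega)\subseteq\Phi^{-1}(\tau\{0,1\}^\omega)\cap\Phi^{-1}(\tau'\{0,1\}^\omega)=\emptyset$. The surviving terms satisfy $b(\tau')\ge k$, and the measures $\mu((X\upharpoonright m)\{0,1\}^\omega\cap\Phi^{-1}(\tau'\{0,1\}^\omega))$ over $\tau'\supseteq\tau$ of length $n$ sum exactly to $\mu((X\upharpoonright m)\{0,1\}^\omega\cap\Phi^{-1}(\tau\{0,1\}^\omega))=\mu((X\upharpoonright m)\{0,1\}^\omega)$. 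Hence $S_n(X\upharpoonright m)\ge k$, so $b'(X\upharpoonright m)\ge k$. As $k$ is arbitrary, $\limsup_m b'(X\upharpoonright m)=\infty$, placing $X$ in ${\rm Succ}(b')$.

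The heart of the argument, and the only non-routine step, is the replacement of a would-be Lebesgue density statement at $X$ inside $\Phi^{-1}(\tau\{0,1\}^\omega)$ by the exact pointwise fact that this preimage is clopen. Totality (hence continuity) of $\Phi$ is what makes this work; without it one would only get density $1$ at $\mu$-almost every point, which would be too weak to put the particular $X$ at hand into ${\rm Succ}(b')$.
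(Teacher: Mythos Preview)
Your proof is correct and follows essentially the same route as the paper's: the martingale identity is checked at the level of the finite sums via the disjoint splitting $[\sigma]=[\sigma0]\sqcup[\sigma1]$, and the success inclusion is obtained by using continuity of $\Phi$ to find a prefix $\sigma\prec X$ with $[\sigma]\subseteq\Phi^{-1}([\tau])$, after which only the terms with $\tau'\supseteq\tau$ survive and each contributes $b(\tau')\ge k$. The only cosmetic difference is that you phrase the continuity step as ``$\Phi^{-1}([\tau])$ is clopen, hence a finite union of basic cylinders,'' whereas the paper simply invokes continuity to get one suitable $\sigma$; the content is identical.
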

\begin{proof}
By assumption, $b^\prime$ is a function from $\{0,1\}^{<\omega}$ into $\mathbb{R}_{\ge 0}$. 
Thus to show that $b^\prime$ is a martingale, we need to prove the equality for the martingales. 
Fix $\sigma\in\{0,1\}^{<\omega}$, 
We have the following equalities 
\begin{align*}
&b^\prime(\sigma 0)+b^\prime(\sigma 1)\\
=&\lim_{n\to\infty}\sum_{\tau\in\{0,1\}^n}b(\tau)\dfrac{\mu(\sigma 0\{0,1\}^\omega\cap\Phi^{-1}(\tau\{0,1\}^\omega))+\mu(\sigma 1\{0,1\}^\omega\cap\Phi^{-1}(\tau\{0,1\}^\omega))}{2^{-1}\mu(\sigma\{0,1\}^\omega)}\\
=&2\lim_{n\to\infty}\sum_{\tau\in\{0,1\}^n}b(\tau)\dfrac{\mu(\sigma\{0,1\}^\omega\cap\Phi^{-1}(\tau\{0,1\}^\omega))}{\mu(\sigma\{0,1\}^\omega)}\\
=&2b^\prime(\sigma). 
\end{align*}
Thus, $b^\prime$ is a martingale. 

Assume that $b$ satisfies the saving property. 
Fix $X\in\{0,1\}^\omega$ such that $\Phi(X)\in{\rm Succ}(b)$ and fix $k\in\omega$. 
By the saving property of $b$, 
we can find a finite initial segment $\tau$ of $\Phi(X)$ such that 
$b(\rho)\ge k$ for all $\rho\in\tau\{0,1\}^{<\omega}$. 
By the continuity of $\Phi$, 
there exists a finite initial segment $\sigma$ of $X$ with $\Phi(\sigma\{0,1\}^\omega)\subset\tau\{0,1\}^\omega$. 
Now, we have the following inequalities 
\begin{align*}
&b^\prime(\sigma)=\lim_{n\to\infty}\sum_{\rho\in\{0,1\}^n}b(\rho)\dfrac{\mu(\sigma\{0,1\}^\omega\cap\Phi^{-1}(\rho\{0,1\}^\omega))}{\mu(\sigma\{0,1\}^\omega)}\\
=&\lim_{n\to\infty}\sum\left\{b(\rho)\dfrac{\mu(\sigma\{0,1\}^\omega\cap\Phi^{-1}(\rho\{0,1\}^\omega))}{\mu(\sigma\{0,1\}^\omega)}:\rho\in\{0,1\}^n, \rho\supset\tau\right\}\\
\ge&\lim_{n\to\infty}\sum\left\{k\dfrac{\mu(\sigma\{0,1\}^\omega\cap\Phi^{-1}(\rho\{0,1\}^\omega))}{\mu(\sigma\{0,1\}^\omega)}:\rho\in\{0,1\}^n, \rho\supset\tau\right\}=k.
\end{align*}
Since we fixed $k$ arbitrarily, $\lim_{n\to\infty}b^\prime(X\upharpoonright n)=\infty$. 
Thus $\Phi^{-1}({\rm Succ}(b))\subset{\rm Succ}(b^\prime)$ holds. 
\end{proof}

\begin{lemma}\label{L3}
Let $\Phi:\{0,1\}^\omega\to\{0,1\}^\omega$ be a computable injection 
and let $b:\{0,1\}^{<\omega}\to\mathbb{R}_{\ge 0}$ be a computable $\Phi$-martingale. 
Then the function $b^\prime$ defined by 
$$b^\prime(\sigma)=\lim_{n\to\infty}\sum_{\tau\in\{0,1\}^n}b(\tau)\dfrac{\mu(\sigma\{0,1\}^\omega\cap\Phi^{-1}(\tau\{0,1\}^\omega))}{\mu(\sigma\{0,1\}^\omega)}$$
is a (total) computable martingale. 
\end{lemma}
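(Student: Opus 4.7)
The plan is to use the injectivity of $\Phi$ to show that the defining limit is actually attained at some finite stage $N = N(\sigma)$, from which totality, computability, and the martingale property will all follow.

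First, because $\Phi$ is a continuous injection from the compact space $\{0,1\}^\omega$ into the Hausdorff space $\{0,1\}^\omega$, it restricts to a homeomorphism onto its image, so $\Phi^{-1}$ is uniformly continuous on $\Phi(\{0,1\}^\omega)$. This gives the key refinement property: for every $k$ there exists $N$ such that whenever $n \geq N$ and $\tau \in \{0,1\}^n$, the diameter of the (clopen) preimage $\Phi^{-1}(\tau\{0,1\}^\omega)$ is at most $2^{-k}$, so all of its points share the same first $k$ bits. Taking $k = |\sigma|$, for $n \geq N$ every non-empty $\Phi^{-1}(\tau\{0,1\}^\omega)$ is contained in a single $\rho\{0,1\}^\omega$ with $|\rho| = |\sigma|$, and is therefore either entirely contained in $\sigma\{0,1\}^\omega$ or disjoint from it. Letting $T_n = \{\tau \in \{0,1\}^n : \Phi^{-1}(\tau\{0,1\}^\omega) \subseteq \sigma\{0,1\}^\omega\}$, we obtain, for $n \geq N$,
\[
\sum_{\tau \in \{0,1\}^n} b(\tau)\mu(\sigma\{0,1\}^\omega \cap \Phi^{-1}(\tau\{0,1\}^\omega)) \;=\; \sum_{\tau \in T_n} b(\tau)\mu(\Phi^{-1}(\tau\{0,1\}^\omega)).
\]

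I then claim this quantity is independent of $n$ once $n \geq N$. By the $\Phi$-martingale identity, each summand for $\tau \in T_n$ splits as $b(\tau 0)\mu(\Phi^{-1}(\tau 0\{0,1\}^\omega)) + b(\tau 1)\mu(\Phi^{-1}(\tau 1\{0,1\}^\omega))$, and both $\tau 0$ and $\tau 1$ belong to $T_{n+1}$. Conversely, every $\rho \in T_{n+1}$ with non-empty preimage has $\rho^{-} \in T_n$: since $|\rho^{-}| \geq N$, the set $\Phi^{-1}(\rho^{-}\{0,1\}^\omega)$ is either contained in $\sigma\{0,1\}^\omega$ or disjoint from it, and as it contains the non-empty set $\Phi^{-1}(\rho\{0,1\}^\omega) \subseteq \sigma\{0,1\}^\omega$ the disjoint alternative is ruled out. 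The sums at levels $n$ and $n+1$ therefore agree, so the sequence stabilizes at step $N$, the limit defining $b'(\sigma)$ exists, and $b'(\sigma) = \mu(\sigma\{0,1\}^\omega)^{-1} \sum_{\tau \in T_N} b(\tau)\mu(\Phi^{-1}(\tau\{0,1\}^\omega))$.

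For computability, each preimage $\Phi^{-1}(\tau\{0,1\}^\omega)$ admits a canonical finite clopen decomposition $\bigsqcup_i \rho_i\{0,1\}^\omega$ that is uniformly computable in $\tau$: one simultaneously enumerates strings $\rho$ witnessing $\Phi(\rho\{0,1\}^\omega) \subseteq \tau\{0,1\}^\omega$ and strings witnessing $\Phi(\rho\{0,1\}^\omega) \cap \tau\{0,1\}^\omega = \emptyset$, and by compactness of $\{0,1\}^\omega$ together with totality of $\Phi$ a finite stage suffices to classify every $X$. Hence $\mu(\Phi^{-1}(\tau\{0,1\}^\omega))$ and the condition $\Phi^{-1}(\tau\{0,1\}^\omega) \subseteq \sigma\{0,1\}^\omega$ are uniformly computable in $\tau,\sigma$; on input $\sigma$ one searches effectively for the least $N$ satisfying the refinement condition and then computes the stabilized finite sum, yielding $b'$ as a total computable function. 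The martingale identity then comes directly from Lemma~\ref{L2}. The main conceptual step, and the one where injectivity is essential, is the topological refinement argument: without injectivity the diameters of $\Phi^{-1}(\tau\{0,1\}^\omega)$ need not shrink as $|\tau| \to \infty$, and both existence and computability of the limit can fail.
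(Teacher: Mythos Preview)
Your proof is correct and follows essentially the same approach as the paper: find a level $N$ at which each preimage $\Phi^{-1}(\tau\{0,1\}^\omega)$ is either contained in $\sigma\{0,1\}^\omega$ or disjoint from it, show via the $\Phi$-martingale identity that the defining sum stabilizes from level $N$ onward, argue that $N$ can be found effectively, and then invoke Lemma~\ref{L2} for the martingale property. The only cosmetic difference is in how you justify the existence of such an $N$: the paper observes that the compact sets $\Phi(\sigma\{0,1\}^\omega)$ and $\Phi(\{0,1\}^\omega\setminus\sigma\{0,1\}^\omega)$ are disjoint (by injectivity) and hence separated at some finite level $n$ on the image side, whereas you pass to uniform continuity of $\Phi^{-1}$ on $\Phi(\{0,1\}^\omega)$ and bound the diameters of preimages directly. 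These are dual formulations of the same fact, and the resulting effective search and stabilization arguments coincide.
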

\begin{proof}
Note that $\sigma\{0,1\}^\omega$ and $\{0,1\}^\omega\setminus\sigma\{0,1\}^\omega$ are both compact. 
Thus their images under $\Phi$ are again compact (and therefore closed) by the continuity of $\Phi$, and they are disjoint by the injectivity of $\Phi$. 
Thus there exists a natural number $n\in\omega$ such that 
the sets $\{\Phi(X)\upharpoonright n: X\in\sigma\{0,1\}^\omega\}$ and $\{\Phi(X)\upharpoonright n: X\in\{0,1\}^\omega\setminus\sigma\{0,1\}^\omega\}$ are disjoint. 
Note that such a natural number $n$ can be found uniformly in a given $\sigma\in\{0,1\}^{<\omega}$ 
since $\Phi$ is computable. 
If a natural $n$ satisfies the property 
\begin{equation}\label{DP}
\{\Phi(X)\upharpoonright n: X\supset\sigma\}\cap\{\Phi(X)\upharpoonright n: X\not\supset\sigma\}=\emptyset,
\end{equation}
then so does $n+1$. 
Moreover, it $n$ satisfies \eqref{DP}, then the equalities 
\begin{align*}
&\sum_{\tau\in\{0,1\}^n}b(\tau)\dfrac{\mu(\sigma\{0,1\}^\omega\cap\Phi^{-1}(\tau\{0,1\}^\omega))}{\mu(\sigma\{0,1\}^\omega)}\\
=&\sum\left\{b(\tau)\dfrac{\mu(\Phi^{-1}(\tau\{0,1\}^\omega))}{\mu(\sigma\{0,1\}^\omega)}:\exists X\supset\sigma, \tau=\Phi(X)\upharpoonright n\right\}\\
=&\sum\left\{b(\tau i)\dfrac{\mu(\Phi^{-1}(\tau i\{0,1\}^\omega))}{\mu(\sigma\{0,1\}^\omega)}:i\in\{0,1\}, \exists X\supset\sigma, \tau=\Phi(X)\upharpoonright n\right\}\\ 
=&\sum_{\tau\in\{0,1\}^{n+1}}b(\tau)\dfrac{\mu(\sigma\{0,1\}^\omega\cap\Phi^{-1}(\tau\{0,1\}^\omega))}{\mu(\sigma\{0,1\}^\omega)}
\end{align*}
holds since $b$ is a $\Phi$-martingale. 
It follows that 
$$b^\prime(\sigma)=\sum_{\tau\in\{0,1\}^n}b(\tau)\dfrac{\mu(\sigma\{0,1\}^\omega\cap\Phi^{-1}(\tau\{0,1\}^\omega))}{\mu(\sigma\{0,1\}^\omega)}.$$
Since we can find a natural number $n$ satisfying the above equation uniformly in a given $\sigma$ as we noted, 
$b^\prime$ is a (total) computable function. 
Together with the previous lemma, we conclude that $b^\prime$ is a computable martingale. 
\end{proof}

\begin{theorem}
Let $\Phi:\{0,1\}^\omega\to\{0,1\}^\omega$ be a computable injection. 
Then the images of computable random sequences under $\Phi$ are computable random w.r.t. the measure $\mu\circ\Phi^{-1}$. 
\end{theorem}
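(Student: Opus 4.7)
The plan is to prove the contrapositive: given a computable $X$-image $\Phi(X)$ that is not computably random with respect to $\nu := \mu\circ\Phi^{-1}$, produce a computable (ordinary) martingale that succeeds on $X$, contradicting the computable randomness of $X$. The three lemmas already established are precisely tailored to this path: Lemma~\ref{L1} lets us upgrade a computable $\Phi$-martingale to one with the saving property, Lemma~\ref{L3} turns it into a total computable martingale $b'$, and Lemma~\ref{L2} transports success from $\Phi(X)$ back to $X$.

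First I would observe that a $\nu$-martingale and a $\Phi$-martingale are literally the same object. Indeed, by definition of the pushforward, $\nu(\tau\{0,1\}^\omega) = \mu(\Phi^{-1}(\tau\{0,1\}^\omega))$, so the $\nu$-martingale equation $b(\tau)\nu(\tau) = b(\tau 0)\nu(\tau 0) + b(\tau 1)\nu(\tau 1)$ is exactly the defining equation of a $\Phi$-martingale. Hence any computable $\nu$-martingale $b$ that succeeds on $\Phi(X)$ is, after passing to a savings modification via Lemma~\ref{L1}, a computable $\Phi$-martingale with the saving property that still has $\Phi(X) \in \mathrm{Succ}(b)$.

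Next, since $\Phi$ is a computable injection, Lemma~\ref{L3} applies to yield a total computable ordinary martingale
\[
b^\prime(\sigma)=\lim_{n\to\infty}\sum_{\tau\in\{0,1\}^n}b(\tau)\frac{\mu(\sigma\{0,1\}^\omega\cap\Phi^{-1}(\tau\{0,1\}^\omega))}{\mu(\sigma\{0,1\}^\omega)}.
\]
Because $b$ has the saving property, Lemma~\ref{L2} gives $\Phi^{-1}(\mathrm{Succ}(b))\subset \mathrm{Succ}(b^\prime)$; since $\Phi(X)\in\mathrm{Succ}(b)$, we get $X\in\mathrm{Succ}(b^\prime)$. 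This contradicts the assumption that $X$ is computably random with respect to $\mu$.

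The main conceptual point is just recognizing that a $\nu$-martingale \emph{is} a $\Phi$-martingale, so the lemma machinery applies directly; the only place injectivity is essential is Lemma~\ref{L3}, where it guarantees that for each $\sigma$ the defining sum stabilizes at a finite stage and so $b^\prime$ is total and computable rather than merely lower semicomputable. I do not anticipate any additional obstacle beyond assembling the three lemmas in this order.
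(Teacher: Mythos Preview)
Your proposal is correct and follows essentially the same route as the paper: take a computable $\Phi$-martingale succeeding on $\Phi(X)$, upgrade it via Lemma~\ref{L1} to have the saving property, apply Lemma~\ref{L3} to obtain the total computable martingale $b'$, and use Lemma~\ref{L2} to conclude $b'$ succeeds on $X$. The only addition is your explicit remark that a $\nu$-martingale and a $\Phi$-martingale are the same thing, which the paper leaves implicit.
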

\begin{proof}
Fix $X\in\{0,1\}^\omega$ whose image under $\Phi$ is not computable random w.r.t. $\mu\circ\Phi^{-1}$. 
Choose a computable $\Phi$-martingale $b$ succeeding on $\Phi(X)$. 
By Lemma \ref{L1}, we may safely assume that $b$ satisfies the saving property. 
Define $b^\prime$ as 
$$b^\prime(\sigma)=\lim_{n\to\infty}\sum_{\tau\in\{0,1\}^n}b(\tau)\dfrac{\mu(\sigma\{0,1\}^\omega\cap\Phi^{-1}(\tau\{0,1\}^\omega))}{\mu(\sigma\{0,1\}^\omega)}$$
for all $\sigma\in\{0,1\}^{<\omega}$. 
By Lemma \ref{L2}, we know that $b^\prime$ is a computable martingale. 
Finally, we have ${\rm Succ}(b^\prime)\supset\Phi^{-1}({\rm Succ}(b))$ by Lemma \ref{L2}. 
Thus $b^\prime$ succeeds on $X$ since $b$ does on $\Phi(X)$. 
It follows that $X$ is not computable random. 
This completes the proof. 
\end{proof}

\section[Nies: Normality of a real relative to  non-integral bases]{Nies: Normality of a real relative to  non-integral bases, and uniform distribution}

By Andr\'e Nies, started March-April 2013. Represents work with Figueira.
\newcommand{\fract}{\mbox{\rm \textsf{frac}}}
\subsection{Normality}
For a real $x$ let $\fract (x)$ denote the fractional part $x-\lfloor x \rfloor$. (Some sources use $\{x\}$, which may  stem from the time of typewriters when fonts and symbols were limited.) A sequence $\seq {y_j}\sN j$ of reals in $[0,1]$ is \emph{uniformly distributed} if for each interval $[u,v]\sub [0,1]$, the proportion  of $i< N$ with  $y_j \in [u,v]$ tends to $v-u$ as $N \to \infty$. In  other words, the Bernoulli($(v-u)$) distributed random variables $1_{[u,v]}(y_j)$ satisfy the law of large numbers.
\begin{definition}\label{def:normal other bases} Let $r> 1$. We say that $x \in \RR$ is \emph{normal in base $r$} if the sequence $\seq {\fract {(xr^n)}}\sN n$ is uniformly distributed in $[0,1]$.
\end{definition}
For every $r>1$, the set of reals that are  normal for base  $r$ is conull. However, it is unknown whether, for instance, $1/2$ is normal in base $3/2$.

A real $x$ is  \emph{absolutely normal}  if it is normal in all integer bases $>1$. Let us say that~$x$ is \emph{rationally normal} if it is normal in all rational  bases $>1$.

In the following we will see that the rationally normal reals form a proper subset of the  absolutely normal reals, even though they  still are conull. This is a special case of a result by Brown, Moran and Pearce  \cite[Thm 2]{Brown.Moran.etal:86}.

Sets $A,B \sub (1, \infty)$ are called \emph{multiplicatively independent} (m.i.) if there are no $a\in A, b \in B, r,s \in \NN$ such that $a^r= b^s$. For instance, $A= \NN\setminus \{0,1\} $ and $B=\{3/2\}$ are m.i. The result of Brown et al.\ says that given m.i.\ sets of algebraic numbers, every real is the sum of four numbers that are normal for all bases in $A$, but none in $B$. In particular, there are uncountably many reals that are absolutely normal, but not normal  for the base $3/2$.
Figueira and Nies \cite{Figueira.Nies:13}  show that every polynomially random real is absolutely normal.  This is also in upcoming work of  Mayordomo and Hitchcock, who use   polynomial dimension as an intermediate notion. The following likely conjecture would extend that  result.
\begin{conjecture} Every polynomially random real is rationally normal.
\end{conjecture}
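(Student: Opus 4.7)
The plan is to prove the contrapositive, extending the Figueira--Nies route from polynomial randomness to absolute normality. Suppose $x$ is not normal in some rational base $r = p/q > 1$, with $\gcd(p,q) = 1$. By Weyl's equidistribution criterion the non-normality is equivalent to the existence of a nonzero integer $h$ and $\delta > 0$ such that
\[ \Bigl|\sum_{n<N} e^{2\pi i h x r^n}\Bigr| \geq \delta N \quad \text{for infinitely many } N; \]
equivalently, the Erd\H{o}s--Tur\'an--Koksma inequality applied to $(\fract(xr^n))$ shows that failure of equidistribution is detected by a single biased Fourier mode. Fix such $h$ and $\delta$, and pigeonhole a unit phase $\alpha$ from (say) the eighth roots of unity so that $\mathrm{Re}(\overline\alpha\,\Sigma_N(x)) \geq (\delta/4) N$ infinitely often, where $\Sigma_N(x) := \sum_{n<N} e^{2\pi i h xr^n}$.

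Next extract a polynomial-time martingale. Each term $e^{2\pi i h xr^n}$ is $O(|h|r^n)$-Lipschitz in $x$, so $\Sigma_N(x)$ is approximable to uniform error $o(1)$ using only the first $K_N = O(N\log r + \log N)$ bits of $x$, and this approximation $\widetilde\Sigma_N(y)$ is computable in time polynomial in $K_N$. Define a tilted-density martingale at scale $N$ by
\[ M^{(N)}(y) := 1 + c \cdot \mathrm{Re}\bigl(\overline\alpha\,\widetilde\Sigma_N(y)\bigr) / N \qquad \text{for } y \in \{0,1\}^{K_N}, \]
with $c > 0$ small enough that $M^{(N)} \geq 0$ and $\int M^{(N)}\,d\lambda = 1 + o(1)$; extend $M^{(N)}$ as the corresponding Lebesgue-conditional expectation on shorter prefixes and stop it on longer ones. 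The weighted sum $M := \sum_N 2^{-N} M^{(N)}$ is a single polynomial-time martingale, and via the savings trick together with the infinitely many witnesses $N$ one shows that $M$ grows unboundedly along $x$, contradicting polynomial randomness.

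The main obstacle is the second-moment bound
\[ \int_0^1 |\Sigma_N(x)|^2\,dx = O(N), \]
which is needed to keep $\int M^{(N)}\,d\lambda$ within $1+o(1)$ and hence to make the tilted-density construction a genuine martingale on average. The cross-terms reduce to
\[ \int_0^1 e^{2\pi i h(r^m - r^n)x}\,dx = O\bigl((|h|\,|r^m - r^n|)^{-1}\bigr), \]
which for integer bases $r$ vanish identically since $h(r^m - r^n) \in \ZZ \setminus \{0\}$, but for rational $r = p/q$ with $q \geq 2$ only decay like $r^{-\max(m,n)}/|h|$. The assumption $\gcd(p,q)=1$ together with $r > 1$ guarantees $r^m \neq r^n$ for $m \neq n$ and keeps the decay summable, yielding the $O(N)$ bound with constants depending on $r$---but delicate accounting is required to keep these constants independent of $N$. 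This quantitative passage from integer to rational bases is the real content beyond the Figueira--Nies argument; once it is in hand, the step of covering all rational $r>1$ and nonzero integer $h$ simultaneously, by a further weighted sum of the individual $M_{r,h}$ with weights decaying in the height of $r$ and in $|h|$, is routine and preserves polynomial-time computability.
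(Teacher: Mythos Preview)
First, note that in the paper this statement is a \emph{conjecture}: the authors explicitly present it as an open extension of the Figueira--Nies theorem, and give no proof. So there is nothing to compare your argument against except the known integer-base case and the surrounding discussion.

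Your proposal has a genuine gap in the martingale step. Each tilted density satisfies $|M^{(N)}(y)-1|\le c\,|\widetilde\Sigma_N(y)|/N\le c$, so $0\le M^{(N)}\le 1+c$ uniformly. Hence the weighted sum $M=\sum_N 2^{-N}M^{(N)}$ is bounded by $2(1+c)$ on every string, and a bounded martingale cannot succeed on any real; the savings trick converts $\limsup=\infty$ into $\lim=\infty$ but does nothing for a martingale whose values never leave a compact interval. The second-moment bound $\int|\Sigma_N|^2=O(N)$ that you highlight is correct (the off-diagonal terms indeed sum to $O(1)$ for rational $r>1$), but it is not the ingredient that makes $M$ succeed---you never actually use it in the displayed construction.

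What is missing is a \emph{multiplicative} combination, e.g.\ $\prod_j M^{(N_j)}$ along a sparse sequence of witnesses, so that the $(1+c\delta/4)$ gains compound. For that product to be a martingale one needs each $M^{(N_{j+1})}$ to have conditional mean $1$ given the earlier factors, not merely unconditional mean $1+o(1)$. For integer bases this holds because the characters $e^{2\pi i h r^n x}$ are exactly orthogonal and, more fundamentally, the base-$r$ digits of $x$ are i.i.d.\ under Lebesgue measure, so disjoint blocks of digits give genuinely independent information. For rational $r=p/q$ with $q\ge 2$ the map $x\mapsto rx\bmod 1$ is not measure-preserving (the paper remarks this explicitly), there is no i.i.d.\ digit structure, and the characters are only approximately orthogonal; the conditional-mean-one property fails in general and must be recovered by some further device. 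That device---not the unconditional $L^2$ bound---is the real content separating the rational case from the integer case, and your sketch does not supply it.
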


\subsection{Uniform distribution and Weyl's theorem}

Weyl  (1916) proved that for every sequence $\seq {a_i}\sN i$ of distinct integers, for almost every $y\in [0,1]$,  the sequence $\seq {\fract{(a_i y)}}\sN i$  is uniformly distributed. Avigad (Uniform distribution and algorithmic randomness, 2012) calls $y$ {\em UD random} if this happens for every \emph{computable} sequence of distinct integers. He shows that Schnorr randomness implies UD randomess; on the other hand, UD does not imply even Kurtz, so it is not an actual randomness notion.

\begin{conjecture} \label{Conj:UD poly} Suppose the function $1^j \to a_j$ is polynomial time computable. Then for every polynomially random real $y$, the sequence $\seq {\fract{(a_i y)}}\sN i$  is uniformly distributed.
\end{conjecture}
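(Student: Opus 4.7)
The plan is to adapt the classical $L^2$ proof of Weyl's theorem to the polynomial-time setting, in the spirit of the argument of Figueira and Nies \cite{Figueira.Nies:13} for absolute normality. By Weyl's equidistribution criterion, it suffices to show that for every nonzero integer $h$,
$$
S_{h,N}(y) := \frac{1}{N}\sum_{i<N} e^{2\pi i h a_i y} \longrightarrow 0 \quad \text{as } N \to \infty.
$$
The analytic input is the orthogonality identity, which (after pruning any duplicates from $(a_i)$, itself a polynomial-time operation, so that WLOG the $a_i$ are distinct) gives
$$
\int_0^1 |S_{h,N}(y)|^2 \, dy = \frac{1}{N^2}\sum_{i,j<N}\int_0^1 e^{2\pi i h(a_i - a_j)y}\,dy = \frac{1}{N}.
$$
Chebyshev then yields $\lambda\{y \in [0,1] : |S_{h,N}(y)| > \varepsilon\} \le 1/(N\varepsilon^2)$.

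Next I would construct, for each fixed nonzero $h$, a polynomial-time martingale $d_h$ that succeeds on every $y$ for which $S_{h,N}(y)\not\to 0$. Take a polynomial subsequence $N_k = k^4$ and tolerance $\varepsilon_k = k^{-1/2}$, so that the bad set $V_{h,k} = \{y : |S_{h,N_k}(y)| > \varepsilon_k\}$ has measure at most $k^{-3}$. Since $1^j \mapsto a_j$ is polynomial-time, $a_j$ has bit-length bounded by a polynomial in $j$; thus approximating $S_{h,N_k}(y)$ to within $\varepsilon_k/2$ requires only $\mathrm{poly}(h,k)$ bits of $y$ and polynomial time. A Solovay-style martingale on the classes $\{V_{h,k}\}_k$, combined with the savings trick, yields a martingale $d_h$, polynomial-time uniformly in $h$, that succeeds on any $y$ lying in infinitely many $V_{h,k}$. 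Finally set $d(y) := \sum_{h \ge 1} 2^{-h-1} d_h(y)$; the uniform polynomial-time bound on $d_h$ makes $d$ polynomial-time, and if $y$ is polynomially random then no $d_h$ succeeds on it, so $S_{h,N_k}(y) \to 0$ for every $h \ne 0$.

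The convergence along the subsequence passes to the full sequence by an elementary triangle-inequality argument: for $N_k \le N < N_{k+1}$,
$$
S_{h,N}(y) - \tfrac{N_k}{N}\, S_{h,N_k}(y) \;=\; \tfrac{1}{N}\sum_{N_k \le i < N} e^{2\pi i h a_i y}
$$
has absolute value at most $(N_{k+1}-N_k)/N_k = O(1/k)$, and $|1 - N_k/N| = O(1/k)$ as well, so $S_{h,N_k}(y) \to 0$ forces $S_{h,N}(y) \to 0$; Weyl's criterion then gives UD. The main obstacle is the uniform polynomial-time bookkeeping underlying $d$: while each $V_{h,k}$ is polynomial-time in its own parameters, the weights $2^{-h-1}$, the subsequence rate $N_k = k^4$, and the Solovay-style summation must be orchestrated so that $d(\sigma)$ is computable in time polynomial in $|\sigma|$ yet still accumulates unbounded capital on every failure of UD. The analogous balancing for the integer-base quantifier in the absolute normality argument of Figueira--Nies suggests the same machinery should go through, with $h$ here playing the role of the base $b$ there.
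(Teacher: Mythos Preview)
The statement you are attempting to prove is labelled a \emph{conjecture} in the paper; it is not proven there. The paper only remarks afterwards that a ``feasible version'' of the Miller--Figueira--Nies reduction ``should extend an affirmative answer'' to it. So there is no proof in the paper to compare your attempt against, and what you have written is a proposed attack on an open statement.

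Your overall plan is the natural one and matches the template the paper itself uses for the Schnorr case: Weyl's criterion, the $L^2$ identity $\int_0^1 |S_{h,N}|^2\,dy=1/N$, Chebyshev along a sparse subsequence $N_k$, and then a test built from the bad sets $V_{h,k}$.

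One minor point: your ``prune duplicates, WLOG distinct'' step is not a WLOG. If $a_{2j}=0$ and $a_{2j+1}=j$, the pruned sequence is $0,1,2,\ldots$ and is u.d.\ for a.e.\ $y$, but the original sequence has half its terms equal to $0$ and is never u.d. The conjecture, read in the context of Weyl's theorem, should simply be taken with the hypothesis that the $a_i$ are distinct integers; say that rather than claiming a reduction.

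The central gap is exactly the part you flag as ``the main obstacle'': producing a martingale $d$ with $d(\sigma)$ computable in time polynomial in $|\sigma|$. You correctly observe that membership in $V_{h,k}$ is decidable from $m_k=\mathrm{poly}(h,k)$ bits of $y$, but the Solovay-style martingale $d_h(\sigma)=\sum_k \lambda(V_{h,k}\cap[\sigma])/\lambda([\sigma])$ is \emph{not} obviously polynomial-time: for those $k$ with $m_k>|\sigma|$, the summand asks for the conditional measure of a set defined by an analytic inequality on $m_k-|\sigma|$ further bits, and there is no evident polynomial-time way to compute or even adequately approximate it. These tail terms are not negligible, since $V_{h,k}$ may be concentrated inside $[\sigma]$, making the term close to~$1$. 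Saying the Figueira--Nies machinery ``should go through'' is not a proof; their argument treats digit expansions in integer bases, where the bad sets are unions of cylinders at a scale tied directly to $|\sigma|$, a structure that arbitrary $a_j$ (with bit-length up to $j^c$ for arbitrary $c$) need not have. This bookkeeping is precisely why the paper records the statement as a conjecture rather than a theorem; your sketch is a reasonable plan of attack but not yet a proof.
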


Weyl's theorem can be extended to certain sequences $\seq {u_i}$ of distinct reals: if there is $k >0$ such that $|u_i-u_j| > k$ for $i \neq j$, then $\seq {\fract{(u_i y)}}\sN i$  is uniformly distributed for almost every $y$. See for instance \cite[Cor 4.7]{Kuipers.Niederreiter:12} (first edition 1976),  where it is derived from a more general theorem due to Koksma.

Joseph Miller, Figueira and Nies working at  Miramar near Buenos Aires,  have given an alternative  proof,  reducing the more general case to the case of integers. This proof is effective. In particular, for any computable sequence of reals as above, Schnorr randomness is sufficient. A feasible version of their  proof should extend an affirmative answer to  Conjecture~\ref{Conj:UD poly} to the situation where $a_j$ is a polynomial time computable real uniformly in $1^j $.

We turn this Weyl's theorem extension into a notion of randomness:
\begin{definition}
$x$ is {\em strongly UD-random} iff for every computable sequence $(u_n)$ of rationals such that $(\exists k>0)(\forall i,j)\ [i\not=j\to|u_i-u_j|>k]$, the sequence $(u_nx)$ is uniformly distributed modulo one.
\end{definition}

Strong UD-random implies rational normality. Is strong UD-randomness a strictly stronger notion than UD-randomness? 

Check connection with \cite{Franklin.Greenberg.etal:12,Bienvenu.Day.etal:12}, but observe that the transformation $T(x)$ defined as $xq$ modulo one (for some fixed rational $q$) is not measure preserving, i.e.\ it is not true that $\lambda A=\lambda T^{-1}(A)$ for all measurable set $A$. For $q=3/2$ and $A=[0,1/2]$ we have $T^{-1}(A)=[0,1/3]\cup [2/3,1]$, and so $1/2=\lambda A \not=\lambda T^{-1}(A)=2/3$.

Let $(x_n)$ be any sequence of reals.
For $h\in\ZZ$ and $n\in\NN$ define
$$
S_h(n,x)=\frac{1}{n}\sum_{j<n}e^{2\pi i h x_j}.
$$
Weyl's criterion states that the sequence $(x_n)$ is uniformly distributed modulo one if and only if $\lim_{n\to\infty}S_h(n,x)=0$ for all integers $h\not=0$.

%

\bigskip

Observe that
\begin{eqnarray*}
|S_h(n,x)|^2 &=& \frac{1}{n^2}\left(\sum_{j<n}e^{2\pi i h x_n}\right)\left(\sum_{j<n}e^{-2\pi i h x_n}\right)
\\
&=&\frac{1}{n^2}\left(n+\sum_{\stackrel{i,j< n}{i\not=j}}e^{2\pi i h (x_i-x_j)}\right)
\\
&=&\frac{1}{n^2}\left(n+\sum_{i<j<n}e^{2\pi i h (x_i-x_j)}+e^{2\pi i h (x_j-x_i)}\right)
\\
&=&\frac{1}{n^2}\left(n+2\sum_{i<j<n}\cos(2\pi h (x_i-x_j))\right),
\end{eqnarray*}
and so $|S_h(n,x)|^2$ is uniformly computable in $h$ and $n$.

\bigskip

Recall that Schnorr randomness can be characterized in terms of total Solovay tests \cite{Downey.Griffiths:04}. A {\em total Solovay test} is a sequence $(A_n)$ of uniformly $\Sigma^0_1$ classes such that $\sum_n\lambda A_n$ is finite and a computable real. A set {\em passes} this test if it is in only finitely many of the $A_n$. A set is Schnorr random iff it passes all total Solovay tests.

We modify the proof of Avigad that Schnorr randomness implies UD-randommess to obtain the following:

\begin{theorem}
Schnorr randomness implies strong UD-randomness.
\end{theorem}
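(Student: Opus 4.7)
The strategy is to adapt Avigad's Schnorr-to-UD argument, replacing the exact $L^2$ orthogonality that holds when the $a_n$ are distinct integers by an $L^2$ estimate exploiting the hypothesis $|u_i - u_j| > k$. First I would invoke Weyl's criterion: $(u_n x)$ is uniformly distributed mod $1$ iff $S_h(N, x) := \frac{1}{N}\sum_{n<N} e^{2\pi i h u_n x} \to 0$ for every nonzero integer $h$. Since $(h u_n)_n$ is itself a computable $|h|k$-separated sequence of rationals, the general case reduces to $h = 1$; write $T(N,x) = S_1(N, x)$.

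Specializing the identity for $|S_h(n,x)|^2$ displayed in the excerpt to $x_j := u_j x$ and $h = 1$, and integrating over $[0,1]$, yields
\begin{align*}
\int_0^1 |T(N,x)|^2\,dx \;=\; \frac{1}{N} \;+\; \frac{2}{N^2}\sum_{i<j<N} \frac{\sin(2\pi (u_i - u_j))}{2\pi(u_i - u_j)}.
\end{align*}
The crude bound $|\sin(2\pi a)/(2\pi a)| \le 1/(2\pi k)$ applied termwise produces only an $O(1)$ second summand, which is not Borel--Cantelli summable. The key sharper estimate uses that the remaining $u_j$'s are themselves pairwise $k$-separated: for fixed $i$, at most $2m$ of them lie within distance $mk$ of $u_i$, so the $m$-th closest $u_j$ (among $j < N$, $j \ne i$) is at distance at least $mk/2$, giving
\begin{align*}
\sum_{\substack{j<N\\ j\ne i}} \frac{1}{|u_i - u_j|} \;\le\; \frac{2}{k}\sum_{m=1}^{N}\frac{1}{m} \;=\; O\!\left(\frac{\log N}{k}\right).
\end{align*}
Summing over $i$ and substituting back gives $\int_0^1 |T(N,x)|^2\,dx \le C \log N / (N k)$ for an absolute constant $C$. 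This harmonic counting is the step I expect to be the main technical obstacle; without it the $L^2$ norm of $T(N,\cdot)$ on $[0,1]$ does not tend to zero.

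With the estimate in hand, I would pass to the subsequence $N_m = m^2$, for which $\sum_m C\log(m^2)/(m^2 k \epsilon^2)$ is a computable convergent series for each rational $\epsilon > 0$. The class
\begin{align*}
B_m^\epsilon \;=\; \{x \in [0,1] : |T(m^2, x)| > \epsilon\}
\end{align*}
is uniformly $\Sigma^0_1$, and Chebyshev's inequality combined with the $L^2$ bound gives $\lambda(B_m^\epsilon) \le C \log(m^2)/(m^2 k \epsilon^2)$. Packaged appropriately, this provides a total Solovay test (in the sense of \cite{Downey.Griffiths:04}) covering $\{x : x \in B_m^\epsilon \text{ i.o.}\}$, so every Schnorr random $x$ satisfies $T(m^2, x) \to 0$ (running the argument for each rational $\epsilon > 0$). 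To lift convergence from the square subsequence to the full sequence, for $N \in [m^2, (m+1)^2]$ I would write
\begin{align*}
T(N, x) \;=\; \frac{m^2}{N}\, T(m^2, x) \;+\; \frac{1}{N}\sum_{n = m^2}^{N-1} e^{2\pi i u_n x},
\end{align*}
whence $|T(N, x)| \le |T(m^2, x)| + (2m+1)/m^2 \to 0$. Running this for each $h \ne 0$ and applying Weyl's criterion finishes the argument.
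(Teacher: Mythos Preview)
Your proposal is correct and follows essentially the same approach as the paper: Weyl's criterion, an $L^2$ bound of order $(\log N)/N$ for the exponential sums, passage to the square subsequence $N=m^2$ so that the Chebyshev/Markov bounds are summable, packaging as a total Solovay test, and finally the elementary interpolation back to the full sequence.

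The only noteworthy differences are cosmetic. First, you rederive the $L^2$ estimate by the harmonic counting argument on the $k$-separated points, whereas the paper simply quotes the bound $\int_0^1|S_h(n,x)|^2\,dx\le 1/n+8\log(3n)/(|h|kn)$ from the proof of Koksma's General Metric Theorem in Kuipers--Niederreiter; these are the same computation. Second, you run a separate Solovay test $B_m^\epsilon$ for each rational $\epsilon>0$, while the paper uses a single test $A_n=\{z:|S_h(n^2,z)|^2>1/\log n\}$ with a decaying threshold, which absorbs all $\epsilon$ at once and makes the summability check $\sum_n \log n\cdot(\log n^2)/n^2<\infty$ a one-liner. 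Either packaging works; the paper's is slightly slicker, yours is slightly more transparent.
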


\begin{proof}
Suppose $(u_n)$ is a sequence of rationals such that $(\forall i,j)\ [i\not=j\to|u_i-u_j|>k]$ for some $k>0$ and that the sequence $(u_nx)$ is not uniformly distributed modulo one. Let $x_i=u_ix$. By the Weyl criterion, there is $h\in\ZZ\setminus\{0\}$ such that it is not true that
$
\lim_{n\to \infty} S_h(n,x)=0.
$
Since for any $m$ such that $n^2 \leq m < (n+1)^2$ we have
$|S(n,x)|\leq |S(m^2,x)|+2/\sqrt{n}$, we conclude that it is not true that
$
\lim_{n\to \infty} S_h(n^2,x)=0.
$
Define
$$
A_n=\{z\in[0,1] \mid |S_h(n^2,z)|^2>1/\log n\}.
$$
Suppose that $\delta>0$ is such that there are infinitely many $n$ such that $|S_h(n^2,x)|^2>\delta$. For large enough $n$ this implies that $|S_h(n^2,x)|^2>1/\log n$, and so $x\in A_n$ for infinitely many $n$, so it only remains to show that $(A_n)$ is a total Solovay test.

By Markov's inequality, we have
$$
\lambda A_n\leq \log n \int_0^1|S_h(n^2,x)|^2dx,
$$
and by the proof of Koksma's General Metric Theorem in \cite[Theorem 4.3]{Kuipers.Niederreiter:12},
$$
\int_0^1|S_h(n,x)|^2dx\leq \frac{1}{n}+\frac{8\log (3n)}{|h|kn}.
$$
Hence
$$
\sum_n\lambda A_n\leq \sum_n \log n \left(\frac{1}{n^2}+\frac{8\log (3n^2)}{|h|kn^2}\right)<\infty.
$$

It is clear that $(A_n)$ is a sequence of uniformly $\Sigma^0_1$ classes, that $\lambda A_n$ is computable uniformly in $n$, and hence that $\sum_n \lambda A_n$ is computable. Therefore $(A_n)$ is a total Solovay test.
\end{proof}

Polynomial time randomness is incomparable to Schnorr randomness. On the one hand there are polynomial time random reals which are computable, and these cannot be Schnorr random. On the other hand, one can see from the proof of \cite[Theorem 7.11.6]{Downey.Hirschfeldt:book} that there is a real which is Schnorr random and not polynomial time random.


\newtheorem*{thm*}{Theorem}
\section{Rute: No-randomness-from-nothing for \\ computable randomness}

(By Jason Rute, Pennsylvania State University.) Given a randomness notion, e.g.~Schnorr randomness or Martin-Löf
randomness, there are two properties that one would naturally expect
to hold---randomness preservation and no-randomness-from-nothing.
We will show the second holds for computable randomness. 

Let $\mu$ be a computable probability measure on $\{0,1\}^{\mathbb{N}}$.
Assume $T\colon\{0,1\}^{\mathbb{N}}\rightarrow\{0,1\}^{\mathbb{N}}$
is a total computable map (later we will work with $\mu$-a.e.~computable
maps), and that $\mu_{T}$ is the push-forward measure given by $\mu_{T}(A)=\mu(T^{-1}(A))$. 

The first desirable property is \emph{randomness preservation} or
\emph{randomness conversation}:
\begin{quote}
If $X\in\{0,1\}^{\mathbb{N}}$ is $\mu$-random, then $T(X)$ is $\mu_{T}$-random.
\end{quote}
The intuition behind this property is that if we use one random object
$X$ to generate another object $T(X)$ (for example, use a sequence
of independent $1/2$-weighted coin-flips into simulate a sequence
of independent $1/4$-weighted coin-flips), then $T(X)$ is random
as well. This property is easy to prove for most of the standard randomness
notions---such as Kurtz randomness, Schnorr randomness, Martin-Löf
randomness, weak $n$-randomness, and $n$-randomness---as well as
a few of the more exotic notions---such as difference randomness,
Demuth randomness, and Oberwolfach randomness. However, it was shown
by Bienvenu and Porter \cite{Bienvenu.Porter:12}, as well as Rute \cite{Rute:14}
that it does not hold for computable randomness.

The second desirable property is \emph{no-randomness-from-nothing}:
\begin{quotation}
If $X\in\{0,1\}^{\mathbb{N}}$ is $\mu_{T}$-random, then $X=T(Y)$
for some $\mu$-random $Y\in\{0,1\}^{\mathbb{N}}$.
\end{quotation}
The idea behind this property is that all randoms in the push-forward
measure come from a random source. Or conversely, one cannot condense
a set of non-random objects into a random object. This property is
also known as \emph{surjectivity} since if randomness preservation
holds, then no-randomness-from-nothing is equivalent to saying that
$T$ is a surjective map when the domain and range of $T$ are restricted
to the randoms. It is known that no-randomness-from-nothing holds
for Martin-Löf randomness \cite{Bienvenu.Porter:12}, weak-$n$ randomness
($n\geq2$) and $n$-randomness {[}the last two are personal communication from Laurent
Bienvenu{]}. Bienvenu and Porter\cite{Bienvenu.Porter:12} asked if no-randomness-from-nothing
also holds for computable randomness and Schnorr randomness.

\begin{thm*}[No-randomness-from-nothing for computable randomness.]
If $\mu$ is a computable probability measure on $\{0,1\}^{\mathbb{N}}$,
$T\colon\{0,1\}^{\mathbb{N}}\rightarrow\{0,1\}^{\mathbb{N}}$ is a
$\mu$-a.e.~computable map, and $Y\in\{0,1\}^{\mathbb{N}}$ is $\mu_{T}$-computably
random, then $Y=T(X)$ for some $\mu$-computably random $X\in\{0,1\}^{\mathbb{N}}$.
\end{thm*}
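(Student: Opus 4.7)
The plan is to produce $X \in T^{-1}\{Y\}$ effectively from $Y$, using the disintegration of $\mu$ along $T$ and the hypothesis of $\mu_T$-computable randomness of $Y$ to make that normally only $\mu_T$-a.e.\ defined object effective at the point $Y$.

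First, I would establish an effective disintegration at $Y$. Since $\mu$ is computable and $T$ is $\mu$-a.e.\ computable, the conditional probabilities
\[
p_\sigma(\tau) \;=\; \frac{\mu([\sigma] \cap T^{-1}[\tau])}{\mu_T([\tau])}
\]
are uniformly lower semicomputable in $\sigma,\tau$ and $\mu_T$-a.e.\ well defined. Using that $Y$ is $\mu_T$-computably random, I would show that $p_\sigma(Y \upharpoonright n)$ converges as $n \to \infty$, uniformly in $\sigma$, to the values of a $Y$-computable conditional probability measure $\mu^Y$ supported on the fibre $T^{-1}\{Y\}$: any failure of convergence is a $\mu_T$-null event captured by a computable $\mu_T$-martingale, contradicting the hypothesis.

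Second, using this effective disintegration, I would construct $X$ recursively from $Y$, at each stage extending $X\upharpoonright n$ by a bit that preserves $\mu^Y([X\upharpoonright (n+1)]) > 0$ and, via a priority diagonalisation in the style of Turetsky's construction earlier in this section, defeats the $n$-th partial computable $\mu$-martingale $N_n$. The key correspondence making this work is that, for any total computable $\mu$-martingale $M$, the push-forward
\[
\bar M(\tau) \, \mu_T([\tau]) \;=\; \int_{T^{-1}[\tau]} M(Z) \, d\mu(Z)
\]
(with $M$ first replaced by its truncations at increasing cut-offs, in the spirit of Higuchi's Lemma~\ref{L3}, to preserve computability) is a computable $\mu_T$-martingale. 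Applying the savings gadget of Lemma~\ref{L1} to $M$ beforehand, unbounded growth of $M$ along $X$ propagates through the fibre to unbounded growth of $\bar M$ along $Y$, so any diagonalisation failure in the construction of $X$ would contradict $\mu_T$-computable randomness of $Y$.

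The main obstacle I expect is step one: upgrading the lower semicomputable approximations $p_\sigma(Y\upharpoonright n)$ to a genuinely $Y$-computable conditional measure. The argument relies essentially on $Y$ being \emph{computably} random rather than merely Martin-L\"of random, since the intended ``capturing'' device for divergent approximations is a computable $\mu_T$-martingale, not a $\mu_T$-ML test. A secondary subtlety is verifying that the cut-off procedure used in defining $\bar M$ is compatible with the savings property, so that success of $M$ on one path of the fibre really does show up as $\bar M(Y \upharpoonright n) \to \infty$. Once step one is in place, step two should follow a standard Turetsky-style diagonal construction adapted to the fibre.
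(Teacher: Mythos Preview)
Your step one is where the approach breaks: computable randomness of $Y$ with respect to $\mu_T$ gives only that the ratios $\mu([\sigma]\cap T^{-1}[Y{\upharpoonright}n])/\mu_T(Y{\upharpoonright}n)$ \emph{converge} (this is Doob, Lemma~\ref{lem:doob}), not that the limit is $Y$-computable. A computable-randomness hypothesis does not supply a modulus of convergence; that is the kind of strengthening one gets from Schnorr-type randomness, not from computable randomness. So there is no reason to expect the conditional measure $\mu^Y$ to be $Y$-computable, and without that your Turetsky-style diagonalisation in step two cannot even get off the ground. The push-forward device $\bar M$ you describe is essentially the standard martingale-averaging argument behind randomness preservation, and since randomness preservation is known to \emph{fail} for computable randomness (Bienvenu--Porter, Rute), that device cannot carry the weight you want it to: success of $M$ at one fibre point, or even along the branch you are building, does not force $\bar M(Y{\upharpoonright}n)\to\infty$, because the fibre mass near $X$ may be arbitrarily small.

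The paper sidesteps exactly this obstacle. It never tries to make the conditional measure effective. Instead it works with the graph measure $\nu=\mu_{(\mathrm{id},T)}$, forms the (generally non-computable) conditional $\nu(\cdot\mid Y)$, and takes $X$ to be \emph{blind} $\nu(\cdot\mid Y)$-Martin-L\"of random. The key lemma is a van~Lambalgen-type statement using the integral-test characterisation of computable randomness: if $Y$ is $\nu_2$-computably random and $X$ is blind $\nu(\cdot\mid Y)$-ML random, then $(X,Y)$ is $\nu$-computably random. The point is that for any test pair $(t,\kappa)$ witnessing non-randomness of $(X,Y)$, the slice $t^Y$ has $\int t^Y\,d\nu(\cdot\mid Y)\le \lim_n \kappa(\varepsilon\times Y{\upharpoonright}n)/\mu_T(Y{\upharpoonright}n)<\infty$, where finiteness of the last limit uses only the \emph{convergence} guaranteed by computable randomness of $Y$, not any computability of the limit. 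Finally, since $(\mathrm{id},T)$ is an a.e.\ computable isomorphism onto its image, $\nu$-computable randomness of $(X,Y)$ gives $\mu$-computable randomness of $X$ and $Y=T(X)$. No diagonalisation is needed: \emph{any} blind ML-random for the conditional measure works.
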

This result can be generalized by replacing $\{0,1\}^{\mathbb{N}}$
with other computable metric spaces and $T$ with effectively measurable
maps---Schnorr layerwise computable maps. These generalizations will
appear in an upcoming paper. Also in that paper, I will give a very
natural class of maps such that randomness preservation holds for
computable randomness, and no-randomness-from-nothing holds for Schnorr
randomness.

First some notation and definitions. Let $\varepsilon$ be the empty
string. For a measure $\mu$ on $\{0,1\}^{\mathbb{N}}$ we will use
the notation $\mu(\sigma):=\mu([\sigma])$ and for a measure $\mu$
on $\{0,1\}^{\mathbb{N}}{\times}\{0,1\}^{\mathbb{N}}$, we will use
the notation $\mu(\sigma\times\tau):=\mu([\sigma]\times[\tau])$.
We say a measure $\mu$ on $\{0,1\}^{\mathbb{N}}$ is computable if
$\sigma\mapsto\mu(\sigma)$ is computable (and similarly for measures on $\{0,1\}^{\mathbb{N}}\times\{0,1\}^{\mathbb{N}}$). 

For computable randomness we will use an integral test characterization.
\begin{definition}[{Rute \cite{Rute:14}}]
Let $\mu$ be a computable measure on $\{0,1\}^{\mathbb{N}}$. A
sequence $X\in\{0,1\}^{\mathbb{N}}$ is \emph{$\mu$-computably random}
iff $t(X)<\infty$ for all lower semicomputable functions $t\colon\{0,1\}^{\mathbb{N}}\rightarrow[0,\infty]$
and all computable measures $\nu$ on $\{0,1\}^{\mathbb{N}}$ such
that 
\begin{equation}
\int_{[\sigma]}t\, d\mu\leq\nu(\sigma)\qquad(\sigma\in\{0,1\}^{*}).\label{eq:test-pair1}
\end{equation}

\end{definition}
A similar definition can be used to define computable randomness on
$\{0,1\}^{\mathbb{N}}{\times}\{0,1\}^{\mathbb{N}}$.
\begin{definition}
Let $\mu$ be a computable measure on $\{0,1\}^{\mathbb{N}}{\times}\{0,1\}^{\mathbb{N}}$.
A pair $(X,Y)\in\{0,1\}^{\mathbb{N}}{\times}\{0,1\}^{\mathbb{N}}$
is \emph{$\mu$-computably random} iff $t(X,Y)<\infty$ for all lower
semicomputable functions $t\colon\{0,1\}^{\mathbb{N}}{\times}\{0,1\}^{\mathbb{N}}\rightarrow[0,\infty]$
and all computable measures $\nu$ on $\{0,1\}^{\mathbb{N}}\times\{0,1\}^{\mathbb{N}}$
such that 
\begin{equation}
\int_{[\sigma]\times[\tau]}t\, d\mu\leq\nu(\sigma\times\tau)\qquad(\sigma,\tau\in\{0,1\}^{*}).\label{eq:test-pair2}
\end{equation}
\end{definition}

This next proposition justifies the previous definition.
\begin{prop}
Let $\mu$ be a computable measure on $\{0,1\}^{\mathbb{N}}{\times}\{0,1\}^{\mathbb{N}}$.
A pair $(X,Y)\in\{0,1\}^{\mathbb{N}}{\times}\{0,1\}^{\mathbb{N}}$
is $\mu$-computably random iff $X\oplus Y$ is computably random
on the pushforward measure of $\mu$ under the map $(A,B)\mapsto A\oplus B$.\end{prop}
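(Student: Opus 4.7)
The plan is to exploit that the interleaving map $\phi\colon\{0,1\}^{\mathbb{N}}{\times}\{0,1\}^{\mathbb{N}}\to\{0,1\}^{\mathbb{N}}$, $\phi(A,B)=A\oplus B$, is a total computable bijection with total computable inverse $Z\mapsto(Z_{\text{even}},Z_{\text{odd}})$, so $(\phi,\phi^{-1})$ is a computable homeomorphism. Cylinders translate cleanly: for each $\rho\in\{0,1\}^*$, $\phi^{-1}([\rho])=[\sigma_\rho]\times[\tau_\rho]$ is a single product cylinder with $|\sigma_\rho|+|\tau_\rho|=|\rho|$; and for each $\sigma,\tau$, $\phi([\sigma]\times[\tau])$ is the disjoint union of finitely many cylinders $[\rho]$ of length $|\sigma|+|\tau|$, uniformly computably in $\sigma,\tau$. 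It follows that the pushforward of any computable measure on either side is computable on the other, so $\mu_\phi$ (the pushforward of $\mu$ under $\phi$, in the proposition's notation) is computable iff $\mu$ is.

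For the direction $(\Leftarrow)$, suppose $X\oplus Y$ fails $\mu_\phi$-computable randomness via a test pair $(t,\nu)$. I would set $t'(A,B):=t(A\oplus B)$, which is lower semicomputable since $\phi$ is computable, and let $\nu'$ be the pullback measure on the product space defined on cylinders by $\nu'([\sigma]\times[\tau]):=\nu(\phi([\sigma]\times[\tau]))$; the cylinder decomposition above ensures $\nu'$ is a computable measure. A change of variables then yields
\[
\int_{[\sigma]\times[\tau]} t'\,d\mu \;=\; \int_{\phi([\sigma]\times[\tau])} t\,d\mu_\phi \;\leq\; \nu(\phi([\sigma]\times[\tau])) \;=\; \nu'([\sigma]\times[\tau]),
\]
where the inequality sums the hypothesis $\int_{[\rho]}t\,d\mu_\phi\leq\nu([\rho])$ over the finitely many $[\rho]$ composing $\phi([\sigma]\times[\tau])$. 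Since $t'(X,Y)=t(X\oplus Y)=\infty$, the pair $(t',\nu')$ witnesses that $(X,Y)$ is not $\mu$-computably random.

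The direction $(\Rightarrow)$ proceeds symmetrically. Given a failing pair $(t',\nu')$ on the product space, define $t(Z):=t'(\phi^{-1}(Z))$ (lower semicomputable) and let $\nu$ be the pushforward of $\nu'$ under $\phi$ (computable by the cylinder correspondence). Then
\[
\int_{[\rho]} t\,d\mu_\phi \;=\; \int_{\phi^{-1}([\rho])} t'\,d\mu \;=\; \int_{[\sigma_\rho]\times[\tau_\rho]} t'\,d\mu \;\leq\; \nu'([\sigma_\rho]\times[\tau_\rho]) \;=\; \nu([\rho]),
\]
and $t(X\oplus Y)=t'(X,Y)=\infty$, so $X\oplus Y$ is not $\mu_\phi$-computably random.

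There is no real obstacle here: the content of the proof is entirely bookkeeping about the computable measure-theoretic isomorphism $\phi$. The only items worth checking carefully are that the pullback/pushforward measures are computable and that $t,t'$ are lower semicomputable, both of which follow from the uniform computability of $\phi$ and $\phi^{-1}$ on cylinders. The change-of-variables identities are immediate from the definition of pushforward.
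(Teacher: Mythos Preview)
Your proof is correct and follows essentially the same approach as the paper: translate test pairs $(t,\nu)$ back and forth via the computable bijection $\phi(A,B)=A\oplus B$ and its inverse, using change of variables and the cylinder correspondence. Your version is considerably more detailed than the paper's two-sentence sketch; one cosmetic point is that your labels $(\Leftarrow)$ and $(\Rightarrow)$ are swapped---each argument you give is actually the contrapositive of the opposite direction of the stated equivalence.
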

\begin{proof}
Call this pushforward measure $\mu'$. Given any test pair $t,\nu$
satisfying (\ref{eq:test-pair2}) with $\mu$, consider the test pair
$t',\nu'$ where $t'(X\oplus Y)=t(X,Y)$ and $\nu'$ is the pushforward
of $\nu$ under the map $(A,B)\mapsto A\oplus B$. Then $t',\nu'$
satisfies (\ref{eq:test-pair1}) with $\mu'$. Conversely, any test
pair $t,\nu$ satisfying (\ref{eq:test-pair1}) can be translated
into a test pair $t,\nu$ satisfying (\ref{eq:test-pair2}) with $\mu$.\end{proof}
\begin{definition}
A partial map $T\colon\{0,1\}^{\mathbb{N}}\rightarrow\{0,1\}^{\mathbb{N}}$
is said to be $\mu$-a.e.~computable for a probability measure $\mu$
if $T$ is partial computable and $\mu(\operatorname{dom}T)=1$.\end{definition}
\begin{lemma}[{Rute \cite{Rute:14}}]
\label{lem:isomorphism}Let $\mu$ be a computable probability measure
on $\{0,1\}^{\mathbb{N}}$, let $F\colon\{0,1\}^{\mathbb{N}}\rightarrow\{0,1\}^{\mathbb{N}}$
be a $\mu$-a.e.~computable map, and let $G\colon\{0,1\}^{\mathbb{N}}\rightarrow\{0,1\}^{\mathbb{N}}$
be a $\mu_{F}$-a.e.~computable map such that 
\[
G(F(X))=X\ \ \mu\text{-a.e.}\qquad\text{and}\qquad F(G(Y))=Y\ \ \mu_{F}\text{-a.e.}
\]
Then $F$ and $G$ both preserve computable randomness and if $X$
is $\mu$-computably random and $Y$ is $\mu_{F}$-computably random
then 
\[
G(F(X))=X\qquad\text{and}\qquad F(G(Y))=Y.
\]

\end{lemma}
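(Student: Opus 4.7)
The plan is to apply the integral test characterization of computable randomness, first establishing preservation under both $F$ and $G$ (which is symmetric, so I focus on $F$) and then deriving the exact identities from preservation plus a null-set avoidance argument.

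The key preliminary is that every $\mu$-computably random $X$ lies in $\mathrm{dom}(F)$. Writing $\mathrm{dom}(F) = \bigcap_n O_n$ where $O_n := \{Z : \text{first } n \text{ bits of } F(Z) \text{ converge}\}$ is effectively open and $\mu$-conull, each $O_n^c$ is effectively closed with computable $\mu$-measure $0$, a Kurtz-style null test avoided by any computable random; the analogue for $G$ and $\mu_F$ will then give $F(X) \in \mathrm{dom}(G)$ whenever $F(X)$ is $\mu_F$-computably random. For the preservation step, suppose for contradiction that $X$ is $\mu$-computably random but $(t, \nu)$ is a test pair witnessing that $F(X)$ fails to be $\mu_F$-computably random. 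I set $t'(Z) := t(F(Z))$ on $\mathrm{dom}(F)$ (and $0$ off); this is lower semicomputable with $t'(X) = \infty$. Using $G \circ F = \mathrm{id}$ $\mu$-a.e.,
\[
\int_{[\sigma]} t' \, d\mu \;=\; \int_{G^{-1}[\sigma]} t \, d\mu_F \;\le\; \nu(G^{-1}[\sigma]),
\]
reducing the task to producing a computable measure $\nu'$ with $\nu'(\sigma) \ge \nu(G^{-1}[\sigma])$; the resulting pair $(t', \nu')$ then contradicts $\mu$-computable randomness of $X$.

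For the exact identities, by preservation $F(X)$ is $\mu_F$-computably random, and the domain argument for $G$ gives $F(X) \in \mathrm{dom}(G)$. Consider
\[
E := \{Z \in \mathrm{dom}(F) \cap F^{-1}(\mathrm{dom}(G)) : G(F(Z)) \ne Z\}.
\]
This is $\Sigma^0_1$ (one effectively enumerates $Z$ once some bit of $G(F(Z))$ computes to a value differing from $Z$) and $\mu$-null by hypothesis. The lower semicomputable function $\infty \cdot \mathbf{1}_E$ together with the zero measure form a valid integral test pair (since $\int_{[\sigma]} \infty \cdot \mathbf{1}_E \, d\mu = 0$), so any $\mu$-computably random $X$ lies outside $E$, giving $G(F(X)) = X$. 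The identity $F(G(Y)) = Y$ for $\mu_F$-computably random $Y$ follows symmetrically.

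The main obstacle I anticipate is the construction of $\nu'$ in the preservation step. The value $\nu(G^{-1}[\sigma])$ is only lower semicomputable (since $G^{-1}[\sigma]$ is $\Sigma^0_1$) and $\nu$ is not assumed absolutely continuous with respect to $\mu_F$, so the naive pushforward $G_* \nu$ is not a priori a computable measure. The plan is to exploit the tree additivity $\nu(G^{-1}[\sigma]) = \nu(G^{-1}[\sigma 0]) + \nu(G^{-1}[\sigma 1])$ and the computable total mass $\nu(2^{\mathbb{N}})$, possibly adding a uniform correction compensating for the defect $\nu(2^{\mathbb{N}}) - \nu(\mathrm{dom}(G))$, to produce simultaneous lower- and upper-semicomputable approximations — the upper bound for each $\sigma$ coming from $\nu(2^{\mathbb{N}})$ minus the lower approximations for the siblings of $\sigma$ — that agree, giving a genuine computable measure $\nu'$ dominating $\nu(G^{-1}[\cdot])$.
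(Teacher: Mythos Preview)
The paper does not prove this lemma; it is quoted from Rute~\cite{Rute:14}, so there is no in-paper argument to compare against. Your domain argument and your exact-identity argument are correct as sketched, but the preservation step has a real gap that your proposed fix does not close.

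You correctly reduce preservation to finding a computable measure $\nu'$ with $\nu'(\sigma)\ge\nu(G^{-1}[\sigma])$ for all $\sigma$. The sibling-complement upper bound you propose is
\[
\nu(2^{\mathbb N})-\sum_{|\sigma'|=|\sigma|,\ \sigma'\ne\sigma}\nu(G^{-1}[\sigma'])
\;=\;\nu(G^{-1}[\sigma])+\delta,
\qquad \delta:=\nu(2^{\mathbb N})-\nu(\mathrm{dom}(G)),
\]
which exceeds the lower bound by the defect $\delta$. Since the test-pair measure $\nu$ is completely unrelated to $\mu_F$, $\delta$ is an arbitrary upper-semicomputable real in $[0,\nu(2^{\mathbb N})]$ and you have no computable handle on it. Adding the uniform correction $2^{-|\sigma|}\delta$ preserves additivity but not computability: you would be trying to pin down $\nu(G^{-1}[\sigma])+2^{-|\sigma|}\delta$ between approximations tending to $\nu(G^{-1}[\sigma])$ from below and to $\nu(G^{-1}[\sigma])+\delta$ from above, and these never meet for $|\sigma|\ge 1$ unless $\delta=0$. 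In fact a straightforward diagonalisation builds a lower-semicomputable additive $a$ with $a(\varepsilon)\le 1$ that is dominated by \emph{no} computable measure, so bounding by $\nu(G^{-1}[\cdot])$ is too crude in principle and this route cannot be completed. The extra structure you are discarding is that $t'=t\circ F$ is a pullback through $F$, and the conditional weights $\mu([\sigma]\cap F^{-1}[\tau])$ are genuinely computable (each is lower semicomputable, and for fixed $|\tau|$ they sum to the computable $\mu([\sigma])$ because $F$ is $\mu$-a.e.\ defined). The argument in~\cite{Rute:14} --- compare Higuchi's Lemmas~\ref{L2}--\ref{L3} in this blog for the injective case --- works with these weights and the $\mu_F$-martingale $\tau\mapsto\nu(\tau)/\mu_F(\tau)$; the a.e.-inverse $G$ is used only to control the \emph{success set} of the resulting $\mu$-martingale, not to manufacture the bounding measure.
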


\begin{lemma}
\label{lem:graph}Let $\mu$ be a computable probability measure on
$\{0,1\}^{\mathbb{N}}$, let $T\colon\{0,1\}^{\mathbb{N}}\rightarrow\{0,1\}^{\mathbb{N}}$
be a $\mu$-a.e.~computable map, and let $(\mathrm{id},T)$ be the
map $X\mapsto(X,T(X))$. If $(X,Y)$ is $\mu_{(\mathrm{id},T)}$-computably
random, then $X$ is $\mu$-computably random and $Y=T(X)$.\end{lemma}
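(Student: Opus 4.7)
The plan is to apply Lemma~\ref{lem:isomorphism} with the pair $(F,G)$, where $F$ is the projection $\pi_1(X,Y) = X$ onto the first coordinate and $G = (\mathrm{id},T)$ is the graph map from the statement. Intuitively, the pushforward $\nu := \mu_{(\mathrm{id},T)}$ is concentrated on the graph of $T$, so $F$ and $G$ are mutual inverses almost everywhere, and the lemma lets us transport computable randomness back and forth along this identification.

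First I would check the standing hypotheses. The projection $F$ is total computable, while $G$ is $\mu$-a.e.\ computable by assumption on $T$. Because the pushforward of a computable measure under an a.e.~computable map is again computable, $\nu = \mu_G$ is computable. A direct unfolding shows that the further pushforward of $\nu$ under $F$ equals $\mu$: for any cylinder $[\sigma]$,
\[
\nu_F([\sigma]) \;=\; \nu([\sigma]\times\{0,1\}^{\mathbb{N}}) \;=\; \mu\bigl((\mathrm{id},T)^{-1}([\sigma]\times\{0,1\}^{\mathbb{N}})\bigr) \;=\; \mu([\sigma]\cap\operatorname{dom} T) \;=\; \mu([\sigma]).
\]
Moreover $F(G(X)) = X$ whenever $X\in\operatorname{dom} T$, hence $\mu$-a.e., and $G(F(X,Y)) = (X,T(X))$ which equals $(X,Y)$ on the graph of $T$, hence $\nu$-a.e.

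Since Lemma~\ref{lem:isomorphism} is stated for maps on $\{0,1\}^{\mathbb{N}}$ and $F,G$ live on the product space, I would pass through the homeomorphism $(A,B)\mapsto A\oplus B$. By the Proposition above, computable randomness for $\nu$ on the product corresponds exactly to computable randomness for $\nu'$ (the pushforward under $\oplus$) on $\{0,1\}^{\mathbb{N}}$, and $F,G$ transfer to $\mu$-a.e.\ and $\nu'$-a.e.\ computable maps between $\{0,1\}^{\mathbb{N}}$ and itself satisfying the two-sided a.e.~inverse condition. The main technical point to keep in mind is that computability of the pushforward $\nu$ (and so of $\nu'$), which is what allows Lemma~\ref{lem:isomorphism} to apply, really does follow from $\mu$-a.e.\ computability of $T$; I would cite or briefly re-derive this.

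Now applying Lemma~\ref{lem:isomorphism}: given that $(X,Y)$ is $\nu$-computably random, we obtain $G(F(X,Y)) = (X,Y)$ with no exceptions, that is $(X,T(X)) = (X,Y)$, so $Y = T(X)$. Simultaneously, the lemma says $F$ preserves computable randomness, so $X = F(X,Y)$ is $\nu_F$-computably random, i.e., $\mu$-computably random. This yields both conclusions. The only real obstacle I foresee is bookkeeping around the product-space version of the integral test and the computability of $\nu$; everything else is essentially a direct invocation of Lemma~\ref{lem:isomorphism}.
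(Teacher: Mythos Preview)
Your proposal is correct and follows essentially the same approach as the paper: both apply Lemma~\ref{lem:isomorphism} to the pair consisting of the graph map $(\mathrm{id},T)$ and the first projection $\pi_1$. The only difference is orientation: the paper takes the base measure to be $\mu$ with $F=(\mathrm{id},T)$ and $G=\pi_1$, whereas you take the base measure to be $\nu=\mu_{(\mathrm{id},T)}$ with $F=\pi_1$ and $G=(\mathrm{id},T)$. Your orientation forces you to check that $\nu$ is computable (which you correctly flag), while the paper's orientation sidesteps this since $\mu$ is computable by hypothesis; otherwise the arguments are identical, and your extra care about passing through $\oplus$ to land in $\{0,1\}^{\mathbb{N}}$ is a detail the paper leaves implicit.
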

\begin{proof}
Notice that $(\mathrm{id},T)$ and its inverse $(X,Y)\mapsto X$
satisfy the conditions of Lemma~\ref{lem:isomorphism}. Therefore
if $(X,Y)$ is $\mu_{(\mathrm{id},T)}$-computably random, then $X$
is $\mu$-computably random. And by the composition $(X,Y)\mapsto X\mapsto(X,T(X))$,
we have that $Y=T(X)$.
\end{proof}
The main idea of this blog entry is as follows. The measure $\mu_{(\mathrm{id},T)}$
is supported on the graph of $T$, 
\[
\{(X,T(X)):X\in\{0,1\}^{\mathbb{N}}\}.
\]
Therefore, given a $\mu_{T}$-computably random $Y$, by this last
lemma, it is sufficient to find some $X$ which makes $(X,Y)$
$\mu_{(\mathrm{id},T)}$-computably random. That is the goal of the
rest of this article.
\begin{lemma}[{Folklore \cite[Theorem~7.1.3]{Downey.Hirschfeldt:book}}]
\label{lem:doob}Assume $X\in\{0,1\}^{\mathbb{N}}$ is $\mu$-computably
random and $\nu$ is a computable measure. Then the following limit
converges, 
\[
\lim_{n}\frac{\nu(X{\upharpoonright_{n}})}{\mu(X{\upharpoonright_{n}})}.
\]

\end{lemma}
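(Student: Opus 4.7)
The plan is to observe that the ratio $R(\sigma) := \nu(\sigma)/\mu(\sigma)$ (set to $0$ when $\mu(\sigma) = 0$) is a computable $\mu$-martingale, and then to apply an effective version of Doob's martingale convergence theorem. The martingale identity
\[
R(\sigma 0)\mu(\sigma 0) + R(\sigma 1)\mu(\sigma 1) \;=\; \nu(\sigma 0) + \nu(\sigma 1) \;=\; \nu(\sigma) \;=\; R(\sigma)\mu(\sigma)
\]
holds whenever $\mu(\sigma) > 0$, and computability of $R$ is immediate from that of $\mu$ and $\nu$.

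By the equivalence (due to Rute) between the integral-test characterization of $\mu$-computable randomness given earlier in this section and the classical martingale characterization, $R$ cannot succeed on the $\mu$-computably random $X$, so $\limsup_n R(X\upharpoonright n) < \infty$. The nontrivial step is to upgrade this boundedness to convergence. Suppose for contradiction that $\lim_n R(X\upharpoonright n)$ does not exist. Then there are rationals $p < q$ with $\liminf_n R(X\upharpoonright n) < p < q < \limsup_n R(X\upharpoonright n)$, and the sequence $R(X\upharpoonright \cdot)$ upcrosses $[p,q]$ infinitely often.

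For each such pair I would construct a computable $\mu$-martingale $N_{p,q}$ implementing Doob's classical ``buy low, sell high'' strategy driven by $R$: hold cash flat while $R > p$; once $R < p$, convert all current capital into ``units of $R$'' (so $N_{p,q}$ then tracks $R$); once $R > q$, liquidate back to cash. The martingale equation survives each trade because value is preserved at the switching nodes, and in between switches $N_{p,q}$ is a constant multiple of $R$, which is itself a $\mu$-martingale. With full reinvestment, each completed upcrossing multiplies the capital by at least $q/p > 1$, so $N_{p,q}(X\upharpoonright n) \to \infty$. The weighted sum $N := \sum_{p < q \in \QQ} 2^{-\langle p, q\rangle} N_{p,q}$ (with each $N_{p,q}$ normalized to initial capital $1$) is then a single computable $\mu$-martingale that succeeds on $X$, contradicting $\mu$-computable randomness.

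The main technical care will be needed in making each $N_{p,q}$ an honest total computable $\mu$-martingale: the switching events $R(\sigma) < p$ and $R(\sigma) > q$ are $\Sigma^0_1$ from $\mu$ and $\nu$, so can be waited for computably; cylinders where $\mu(\sigma) = 0$ can be handled by declaring $N_{p,q}$ to equal its parent value along such subtrees, which trivially preserves the martingale equation. Any genuine oscillation of $R(X\upharpoonright \cdot)$ between a finite $\liminf$ and $\limsup$ is detected by some pair $p < q$ in $\QQ$, so the success of $N$ on $X$ is guaranteed.
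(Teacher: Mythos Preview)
The paper does not supply its own proof of this lemma; it simply cites the folklore result as Theorem~7.1.3 in Downey--Hirschfeldt and moves on. Your argument is precisely the standard one behind that citation: recognize $R(\sigma)=\nu(\sigma)/\mu(\sigma)$ as a computable $\mu$-martingale, then run Doob's upcrossing inequality effectively via the ``buy below $p$, sell above $q$'' strategy to convert oscillation into unbounded growth of a computable martingale. So your proposal is correct and aligned with the intended reference.

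One small remark on the technical caveat you already flag. The events $R(\sigma)<p$ and $R(\sigma)>q$ are $\Sigma^0_1$ but not decidable, so the cleanest way to keep $N_{p,q}$ an honest computable $\mu$-martingale is to let the strategy \emph{wait} (holding capital constant) until the relevant inequality is witnessed at some stage, and only then switch; strict inequalities guarantee the switch eventually fires whenever the threshold is genuinely crossed. This is exactly how the argument is carried out in Downey--Hirschfeldt, and it avoids any appeal to partial martingales beyond the harmless fact that $R$ itself need only be defined on cylinders of positive $\mu$-measure. With that adjustment your sketch goes through without difficulty.
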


\begin{lemma}
Let $\mu$ be a computable probability measure on $\{0,1\}^{\mathbb{N}}\times\{0,1\}^{\mathbb{N}}$
with second marginal $\mu_{2}$ (that is $\mu_{2}(\tau)=\mu(\varepsilon\times\tau)$.)
Assume $Y\in\{0,1\}^{\mathbb{N}}$ is $\mu_{2}$-computably random.
The following properties hold.
\begin{enumerate}
\item For a fixed $\tau\in\{0,1\}^{*}$, $\mu(\cdot\times\tau)$
is a probability measure,
\[
\mu(\sigma0\times\tau)+\mu(\sigma1\times\tau)=\mu(\sigma\times\tau).
\]

\item For a fixed $\sigma\in\{0,1\}^{*}$, $\mu(\sigma\times\cdot)$
is a probability measure.
\item The following limit converges 
\[
\mu(\sigma|Y):=\lim_{n}\frac{\mu(\sigma\times Y{\upharpoonright_{n}})}{\mu_{2}(Y{\upharpoonright_{n}})}.
\]

\item The function $\mu(\cdot|Y)$ defines a probability measure
\[
\mu(\sigma0|Y)+\mu(\sigma1|Y)=\mu(\sigma|Y).
\]

\item For a continuous function $f$, if $f^{Y}=f(\cdot,Y)$ then
\[
\int f^{Y}\, d\mu(\cdot|Y)=\lim_{n}\frac{\int_{[\varepsilon]\times[Y{\upharpoonright_{n}}]}f\, d\mu}{\mu_{2}(Y{\upharpoonright_{n}})}.
\]

\item For a nonnegative lower semicontinuous function $t$, if $t^{Y}=t(\cdot,Y)$
then
\[
\int t^{Y}\, d\mu(\cdot|Y)\leq\lim_{n}\frac{\int_{[\varepsilon]\times[Y{\upharpoonright_{n}}]}t\, d\mu}{\mu_{2}(Y{\upharpoonright_{n}})}.
\]

\end{enumerate}
\end{lemma}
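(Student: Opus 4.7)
The plan is to handle the six items in order, using the Doob-style convergence lemma (the ``Folklore'' lemma just proved) as the engine for item (3) and letting the remaining items follow by routine limit passages. Parts (1) and (2) are immediate from $\mu$ being a countably additive probability measure on a product of Cantor spaces: fixing either coordinate and restricting to a cylinder yields a finite positive measure on the other, and the stated additivity across the splitting $\sigma 0\mid\sigma 1$ (resp.\ $\tau 0\mid\tau 1$) is simply $\mu$'s own additivity applied to disjoint cylinders in the product.

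For part (3), I would apply the Doob lemma to the computable sub-probability measure $\nu_{\sigma}(\tau) := \mu(\sigma\times\tau)$ against $\mu_{2}$. Since $Y$ is $\mu_{2}$-computably random and $\nu_\sigma$ is computable uniformly in $\sigma$, the limit $\mu(\sigma|Y) = \lim_n \nu_\sigma(Y\uh_{n})/\mu_2(Y\uh_{n})$ exists; if $\mu(\sigma)=0$ the claim is trivial, otherwise one renormalizes by $\mu(\sigma)$ to invoke the lemma verbatim. Part (4) then follows by passing the prefinite additivity $\mu(\sigma 0\times Y\uh_{n})+\mu(\sigma 1\times Y\uh_{n})=\mu(\sigma\times Y\uh_{n})$ through the limit after division by $\mu_2(Y\uh_n)$, and the total mass is $1$ because at $\sigma=\varepsilon$ the ratio is constantly $1$.

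For part (5) I would first verify the identity on indicators of product cylinders $\chi_{[\rho]\times[\tau]}$: then $\chi^Y = \chi_{[\tau]}(Y)\,\chi_{[\rho]}$, and for $n\ge|\tau|$ either $[Y\uh_n]\subseteq[\tau]$ or $[Y\uh_n]\cap[\tau]=\emptyset$, so both sides reduce to $\chi_{[\tau]}(Y)\,\mu(\rho|Y)$. By linearity this extends to finite cylinder step functions; Stone--Weierstrass on the totally disconnected compact space $\{0,1\}^\mathbb{N}\times\{0,1\}^\mathbb{N}$ then gives uniform approximation of any continuous $f$ by such step functions, and the probability-measure property from (4) converts $\|f-f_k\|_\infty \to 0$ into uniform-in-$n$ control of both sides. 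Part (6) then follows by approximating a nonnegative lower semicontinuous $t$ from below by an increasing sequence of nonnegative continuous $f_k$: monotone convergence on the left and (5) applied to each $f_k$, combined with $f_k\le t$ pointwise, gives
\[
\int t^Y\, d\mu(\cdot|Y) \;=\; \sup_k \lim_n \frac{\int_{[\varepsilon]\times[Y\uh_{n}]} f_k\, d\mu}{\mu_2(Y\uh_{n})} \;\le\; \liminf_n \frac{\int_{[\varepsilon]\times[Y\uh_{n}]} t\, d\mu}{\mu_2(Y\uh_{n})}.
\]

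The step needing the most care is the uniform-approximation interchange in (5): I must ensure that the error in approximating $f$ by a cylinder step function $f_k$ translates into a uniform-in-$n$ error on the ratio $\int_{[\varepsilon]\times[Y\uh_n]}(f-f_k)\,d\mu / \mu_2(Y\uh_n)$. This is exactly where the probability-measure structure from (4) is used: the ratio is then bounded by $\|f-f_k\|_\infty$ uniformly in $n$, so an $\varepsilon/3$ argument (triangle inequality among $\int f_k^Y\,d\mu(\cdot|Y)$, the ratio for $f_k$, and the ratio for $f$) closes the loop.
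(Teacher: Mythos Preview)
Your proposal is correct and follows essentially the same route as the paper: (1)--(2) are immediate, (3) invokes the Doob-type convergence lemma on the computable measure $\tau\mapsto\mu(\sigma\times\tau)$, (4) passes additivity through the limit, (5) reduces to cylinder step functions and uses density in the sup norm, and (6) approximates $t$ monotonically from below. The only cosmetic differences are that the paper writes $t=\sum_k f_k$ and invokes Fatou for sums, whereas you use an increasing sequence $f_k\uparrow t$ and the elementary bound $\sup_k\lim_n\le\liminf_n$; and you spell out the $\varepsilon/3$ uniform-approximation step in (5) that the paper leaves implicit.
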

\begin{proof}
(1) and (2) are apparent. Then (3) follows from (2) and Lemma~\ref{lem:doob}.
Then (4) follows from (1) and the definition of $\mu(\cdot|Y)$.

As for (5), first consider the case where $f$ is a step function
of the form $\sum_{i=0}^{k}a_{i}\mathbf{1}_{[\sigma_{i}]}$. This
case follows from (4). Since such step functions are dense in the
continuous functions under the norm $\|f\|=\sup_{X}f(X)$, we have
the result.

As for (6), $t=\sum_{k}f_{k}$ for a sequence of continuous nonnegative
$f^{k}$. Then we apply the monotone convergence theorem (MCT) for
integrals and Fatou's lemma for sums,
\begin{multline*}
\int t^{Y}\, d\mu(\cdot|Y)=\int\sum_{k}f_{k}^{Y}\, d\mu(\cdot|Y)\overset{\textrm{MCT}}{=}\sum_{k}\int f_{k}^{Y}\, d\mu(\cdot|Y)\\
=\sum_{k}\lim_{n}\frac{\int_{[\varepsilon]\times[Y{\upharpoonright_{n}}]}f_{k}\, d\mu}{\mu_{2}(Y{\upharpoonright_{n}})}\overset{\textrm{Fatou}}{\leq}\lim_{n}\sum_{k}\frac{\int_{[\varepsilon]\times[Y{\upharpoonright_{n}}]}f_{k}\, d\mu}{\mu_{2}(Y{\upharpoonright_{n}})}=\lim_{n}\frac{\int_{[\varepsilon]\times[Y{\upharpoonright_{n}}]}t\, d\mu}{\mu_{2}(Y{\upharpoonright_{n}})}.
\end{multline*}
\end{proof}
\begin{definition}
Let $\mu$ be a measure on $\{0,1\}^{\mathbb{N}}$ which may not be
computable. A sequence $X\in\{0,1\}^{\mathbb{N}}$ is \emph{blind $\mu$-Martin-Löf random}
if $t(X)<\infty$ for all lower semicomputable $t$ such that $\int t\, d\mu<\infty$.
\end{definition}
Finally, we have the tools to find some $X$ to pair with $Y$. What
follows is a variation of van Lambalgen's theorem.
\begin{lemma}
\label{lem:vL}Let $\mu$ be a computable measure on $\{0,1\}^{\mathbb{N}}{\times}\{0,1\}^{\mathbb{N}}$.
Let $Y$ be $\mu_{2}$-computably random. Let $X$ be blind $\mu(\cdot|Y)$-Martin-Löf
random. Then $(X,Y)$ is $\mu$-computably random.\end{lemma}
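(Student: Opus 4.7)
The strategy is a van Lambalgen style argument: suppose toward contradiction that $(X,Y)$ is not $\mu$-computably random, witnessed by a test pair $(t,\nu)$ with $t(X,Y)=\infty$. From $\nu$ I will extract a computable marginal on the second coordinate, use computable randomness of $Y$ relative to $\mu_2$ to control a certain Radon--Nikodym-type ratio via Lemma~\ref{lem:doob}, and then reduce matters to a blind Martin--Löf test on $X$ with respect to $\mu(\cdot\,|\,Y)$ via part~(6) of the preceding conditional-measure lemma.

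Concretely, set $\nu_2(\tau):=\nu(\varepsilon\times\tau)$; this is a computable probability measure on $\{0,1\}^{\mathbb N}$. Taking $\sigma=\varepsilon$ in (\ref{eq:test-pair2}) gives $\int_{[\varepsilon]\times[\tau]}t\,d\mu\le\nu_2(\tau)$ for every $\tau\in\{0,1\}^{*}$. Since $Y$ is $\mu_2$-computably random, Lemma~\ref{lem:doob} (applied with the computable measures $\mu_2$ and $\nu_2$) supplies a finite limit $L:=\lim_n \nu_2(Y{\upharpoonright_n})/\mu_2(Y{\upharpoonright_n})$. Applying part~(6) of the preceding lemma to the lower semicontinuous function $t$ and writing $t^Y(X):=t(X,Y)$, I obtain
\[
\int t^Y\,d\mu(\cdot\,|\,Y)\;\le\;\lim_n\frac{\int_{[\varepsilon]\times[Y{\upharpoonright_n}]}t\,d\mu}{\mu_2(Y{\upharpoonright_n})}\;\le\;\lim_n\frac{\nu_2(Y{\upharpoonright_n})}{\mu_2(Y{\upharpoonright_n})}\;=\;L\;<\;\infty.
\]
Thus $t^Y$ is a lower semicontinuous function of $X$ whose integral against $\mu(\cdot\,|\,Y)$ is finite. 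Invoking blind $\mu(\cdot\,|\,Y)$-Martin--Löf randomness of $X$ forces $t^Y(X)=t(X,Y)<\infty$, contradicting the choice of the test pair $(t,\nu)$.

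The subtle point, and the place I expect to be the real obstacle, is that $t^Y$ is lower semicomputable only \emph{relative} to the parameter $Y$: approximating it from below requires reading bits of $Y$. For the final step to apply as stated, one must read ``blind $\mu(\cdot\,|\,Y)$-Martin--Löf random'' as permitting lower semicomputable tests that use $Y$ as an oracle---which is the natural convention, since $\mu(\cdot\,|\,Y)$ itself is only computable with $Y$ as oracle. If one insists on unrelativized tests, the detour is to decompose $t=\sum_k f_k$ into continuous pieces as in the proof of the preceding lemma, observe that each $f_k(X,Y)$ depends on only finitely many bits of $Y$, and splice together an unrelativized lower semicomputable majorant of $t^Y$ whose integral against $\mu(\cdot\,|\,Y)$ is still controlled by~$L$. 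Either route closes the argument, but clarifying this oracle issue is where the care lies.
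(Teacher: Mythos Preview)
Your proof is correct and follows the paper's argument essentially line for line: assume a test pair $(t,\nu)$ with $t(X,Y)=\infty$, bound $\int t^Y\,d\mu(\cdot\,|\,Y)$ by $\lim_n \nu(\varepsilon\times Y{\upharpoonright_n})/\mu_2(Y{\upharpoonright_n})$ via part~(6) of the conditional-measure lemma and Lemma~\ref{lem:doob}, then invoke blind randomness of $X$ to force $t(X,Y)<\infty$. The oracle subtlety you flag is genuine, but the paper treats it no more carefully than you do---it simply asserts that $t^Y$ is lower semicomputable and applies the blind-ML definition directly, so your caveat is an improvement rather than a gap.
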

\begin{proof}
Assume $Y$ is $\mu_{2}$-computably random, but $(X,Y)$ is not $\mu$-computably
random. Then there is a lower semicomputable function $t$ and a computable
measure $\nu$ such that $t(X,Y)=\infty$ and $\int_{[\sigma]\times[\tau]}t\, d\mu\leq\nu(\sigma\times\tau)$.

Let $t^{Y}=t(\cdot,Y)$. Then $t^{Y}$ is lower semicomputable. Moreover,
since $t$ is lower semicontinuous, we have that
\[
\int t^{Y}\, d\mu(\cdot|Y)\leq\lim_{n}\frac{\int_{[\varepsilon]\times[Y{\upharpoonright_{n}}]}t\, d\mu}{\mu_{2}(Y{\upharpoonright_{n}})}\leq\lim_{n}\frac{\nu(\varepsilon\times Y{\upharpoonright_{n}})}{\mu_{1}(Y{\upharpoonright_{n}})}
\]
where the right-hand side converges to a finite value by Lemma~\ref{lem:doob}
since $Y$ is $\mu_{2}$-computably random. Therefore, $X$ is not
blind Martin-Löf random since $\int t^{Y}\, d\mu(\cdot|Y)<\infty$
and $t^{Y}(X)=t(X,Y)=\infty$.\end{proof}
\begin{thm}[No-randomness-from-nothing for computable randomness.]
If $\mu$ is a computable probability measure on $\{0,1\}^{\mathbb{N}}$,
$T\colon\{0,1\}^{\mathbb{N}}\rightarrow\{0,1\}^{\mathbb{N}}$ is a
$\mu$-a.e.~computable map, and $Y\in\{0,1\}^{\mathbb{N}}$ is $\mu_{T}$-computably
random, then $Y=T(X)$ for some $\mu$-computably random $X\in\{0,1\}^{\mathbb{N}}$.\end{thm}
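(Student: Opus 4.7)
The plan is to assemble the machinery already developed. Given the $\mu$-a.e.~computable map $T$ and the $\mu_T$-computably random $Y$, I would consider the graph map $(\mathrm{id},T)\colon X\mapsto(X,T(X))$ and its pushforward $\mu_{(\mathrm{id},T)}$ on $\{0,1\}^{\mathbb{N}}{\times}\{0,1\}^{\mathbb{N}}$. The second marginal of $\mu_{(\mathrm{id},T)}$ is exactly $\mu_T$, for which $Y$ is computably random by hypothesis. By Lemma~\ref{lem:vL}, if I can produce an $X$ that is blind $\mu_{(\mathrm{id},T)}(\cdot\mid Y)$-Martin-L\"of random, then $(X,Y)$ will be $\mu_{(\mathrm{id},T)}$-computably random; and Lemma~\ref{lem:graph} then forces $X$ to be $\mu$-computably random and $Y=T(X)$, which is precisely the desired conclusion.

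The argument thus reduces to two sub-tasks. First, I need to verify that $\mu_{(\mathrm{id},T)}$ is itself a computable measure on the product. The sets $\{Z:\sigma\prec Z\text{ and }\tau\prec T(Z)\}$ are effectively open (witnessed by partial computations of $T$), so their $\mu$-measures are lower semicomputable uniformly in $\sigma,\tau$. Because $\sum_{|\tau|=n}\mu_{(\mathrm{id},T)}(\sigma\times\tau)=\mu([\sigma])$ modulo a $\mu$-null set (using $\mu(\operatorname{dom}T)=1$) and this total is computable, each of the finitely many lower semicomputable summands is actually computable, uniformly in $\sigma$ and $\tau$. This is precisely computability of $\mu_{(\mathrm{id},T)}$.

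Second, I need to produce the blind $\mu_{(\mathrm{id},T)}(\cdot\mid Y)$-Martin-L\"of random $X$. The conditional-measure lemma just above Lemma~\ref{lem:vL} guarantees that $\mu_{(\mathrm{id},T)}(\cdot\mid Y)$ is a well-defined probability measure on $\{0,1\}^{\mathbb{N}}$, because $Y$ is $\mu_T$-computably random. Blind Martin-L\"of randomness refers only to lower semicomputable tests with finite integral; since there are only countably many lower semicomputable functions and each such test must be finite almost everywhere with respect to \emph{any} probability measure, the set of blind Martin-L\"of randoms for $\mu_{(\mathrm{id},T)}(\cdot\mid Y)$ has full measure, hence in particular is nonempty. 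Pick any such $X$.

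The main conceptual point I expect to have to defend is exactly this last step: the conditional measure $\mu_{(\mathrm{id},T)}(\cdot\mid Y)$ is built from $Y$ and so is typically not computable, which is why the ordinary Martin-L\"of definition cannot be used directly; the blind variant is tailored to cope with non-computable measures while still yielding a full-measure set of randoms. Once that point is absorbed, the existence of $X$ is automatic and Lemmas~\ref{lem:graph} and~\ref{lem:vL} carry the remaining work, so the whole proof amounts to chaining together: pick a blind random $X$ for $\mu_{(\mathrm{id},T)}(\cdot\mid Y)$; invoke the van Lambalgen-style Lemma~\ref{lem:vL} to lift $(X,Y)$ to computable randomness for $\mu_{(\mathrm{id},T)}$; then invoke Lemma~\ref{lem:graph} to conclude $Y=T(X)$ with $X$ computably $\mu$-random.
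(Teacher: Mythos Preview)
Your proposal is correct and follows exactly the same route as the paper: pick $X$ blind $\mu_{(\mathrm{id},T)}(\cdot\mid Y)$-Martin-L\"of random (measure-one many exist), apply Lemma~\ref{lem:vL} to get $(X,Y)$ computably random for $\mu_{(\mathrm{id},T)}$, then apply Lemma~\ref{lem:graph}. You have added the explicit verification that $\mu_{(\mathrm{id},T)}$ is a computable measure, which the paper leaves implicit but which is indeed needed for the lemmas to apply; your argument for this is the standard one and is fine.
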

\begin{proof}
Let $\nu=\mu_{(\mathrm{id},T)}$, and let $X$ be blind $\nu(\cdot|Y)$-Martin-Löf
random (there are $\nu(\cdot|Y)$-measure-one many, so there is at
least one). By Lemma~\ref{lem:vL}, $(X,Y)$ is $\mu_{(\mathrm{id},T)}$-computably
random. By Lemma~\ref{lem:graph}, $X$ is computably random and
$Y=T(X)$.\end{proof}

\part{Higher randomness}

\section{Monin: Separating higher weakly-2-random from $\Pi^1_1$-random}
Written by Benoit Monin in December 2013, joint work with Laurent Bienvenu and Noam Greenberg.

\begin{theorem} There is a higher weakly 2-random $X$ that is not $\Pi^1_1$-random. \end{theorem}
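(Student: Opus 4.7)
The goal is to produce a single real $X$ that simultaneously (a) lies in the largest $\Pi^1_1$-null class $\mathcal{U}$ (so $X$ is not $\Pi^1_1$-random) and (b) avoids every null higher $G_\delta$ set, i.e.\ every intersection $\bigcap_n V_n$ of $\Sigma^1_1$-open sets with $\mu(\bigcap_n V_n)=0$ (so $X$ is higher weakly $2$-random). My plan is an effective forcing construction inside $\mathcal{U}$ that diagonalises against a $\Pi^1_1$-enumeration of all null higher $G_\delta$ tests $\langle V^e_n\rangle_{n\in\omega}$.

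First I would list the candidate tests: the class of sequences $(V^e_n)_n$ of uniformly $\Sigma^1_1$-open sets with $\mu(\bigcap_n V^e_n)=0$ can be indexed by a $\Pi^1_1$ set of indices $e$, because $\Sigma^1_1$-open sets admit a universal uniformly-$\Sigma^1_1$ listing and the nullity condition on the intersection is $\Pi^1_1$. This enumeration is coarse (not uniformly effective in the classical sense), but only a $\Pi^1_1$-effective enumeration is required for the construction since the construction itself will be carried out at level $\Pi^1_1$.

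Next I would set up forcing. Conditions are $\Pi^1_1$ subclasses $P\subseteq\mathcal{U}$ which are \emph{higher-fat}: there is some $\delta>0$ such that $P$ is not covered by any $\Sigma^1_1$-open set of $\mu$-measure less than $\delta$. The starting condition is a carefully chosen $\Pi^1_1$ perfect subclass of $\mathcal{U}$; the existence of such higher-fat subclasses of $\mathcal{U}$, despite $\mu(\mathcal{U})=0$, is the crucial input and is where Hjorth--Nies-style structural results on $\mathcal{U}$ (its stratification by computable ordinals, and the failure of $\mathcal{U}$ itself to be a null higher $G_\delta$) would be invoked. Given a higher-fat $P_e$ and the $e$-th test, I use $\mu(V^e_n)\to 0$ to pick $n_e$ with $\mu(V^e_{n_e})$ much smaller than the fatness constant of $P_e$, and let $P_{e+1}=P_e\setminus V^e_{n_e}$; this remains a $\Pi^1_1$ subclass of $\mathcal{U}$ and remains higher-fat by a subtraction argument. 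A higher compactness / $\Pi^1_1$-König-lemma argument yields a real $X\in\bigcap_e P_e\subseteq\mathcal{U}$, and by construction $X\notin V^e_{n_e}$ for every $e$, so $X$ misses every null higher $G_\delta$ class and therefore is higher weakly $2$-random.

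The principal obstacle is the tension between $\mathcal{U}$ being classically null and needing a robust notion of ``largeness'' for the forcing conditions to survive diagonalisation against arbitrary null $G_\delta$ covers. The key technical ingredient I would expect to take the most work is proving that the fatness property is preserved when passing to $\Pi^1_1$-definable subclasses of $\mathcal{U}$ and that fatness genuinely supplies nonempty intersections at the limit; this is essentially a higher-effective measure-theoretic refinement exploiting that $\mathcal{U}$, although Lebesgue-null, cannot be covered from outside by a single higher $G_\delta$ null class, and more generally has enough $\Pi^1_1$ structure to absorb countably many small $\Sigma^1_1$-open subtractions.
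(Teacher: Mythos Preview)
Your proposal has a genuine gap at its core, and the paper's approach is entirely different.

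The gap is in the notion of ``higher-fat'' conditions inside $\mathcal{U}$. You define $P\subseteq\mathcal{U}$ to be higher-fat if it is not covered by any higher effectively open set of measure $<\delta$; you then need such $P$ to exist and to survive the subtraction $P\mapsto P\setminus V^e_{n_e}$. But this is precisely where the difficulty of the theorem lives, and you have not given a mechanism for producing such conditions. The set $\mathcal{U}$ of non-$\Pi^1_1$-randoms decomposes as the non-higher-ML-randoms together with $\{X:\omega_1^X>\omega_1^{\mathrm{CK}}\}$. The first part \emph{is} covered by higher effectively open sets of arbitrarily small measure (namely the levels of the universal higher ML test), so any fat $P$ must live essentially inside $\{X:\omega_1^X>\omega_1^{\mathrm{CK}}\}$. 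That set is $\Pi^1_1$ but has no useful $\Pi^1_1$-closed internal structure to support a forcing poset of the kind you describe; no ``Hjorth--Nies-style structural result'' supplies it. Your invocation of a higher K\"onig-lemma for the intersection $\bigcap_e P_e$ is likewise unsupported: you have a nested sequence of $\Pi^1_1$ (not $\Sigma^1_1$-closed) classes inside a null set, with no compactness available. In effect, the ``key technical ingredient'' you defer is the entire content of the theorem.

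The paper takes a completely different route. It does not force inside $\mathcal{U}$ at all. Instead it builds $X$ as the limit of a higher $\Delta^0_2$ approximation $\{X_s\}_{s<\omega_1^{\mathrm{CK}}}$ via a priority construction along the computable ordinals. Non-$\Pi^1_1$-randomness is obtained not by placing $X$ into a specific $\Pi^1_1$ null class chosen in advance, but by making the approximation \emph{wicked}: for every computable $s$, $X$ is not in the closure of $\{X_t\}_{t<s}$. This forces $\omega_1^X>\omega_1^{\mathrm{CK}}$ directly, since $X$ can compute a function cofinal in $\omega_1^{\mathrm{CK}}$ from the approximation. Higher weak $2$-randomness is achieved positively: the construction maintains, at every stage $s$, a finite path $\sigma^0_s\prec\sigma^1_s\prec\cdots$ together with $\Sigma^1_1$-closed sets $F_{e_k,d_k}$ drawn from the higher $\Sigma^0_2$ classes of full measure, satisfying measure lower bounds $\lambda(\bigcap_{k\le n}F_{e_k,d_k}\cap[\sigma^n_s])\ge 2^{-q_n}$. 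A combinatorial density lemma guarantees at least two admissible extensions at each level, so that one can be banished whenever infinitely many changes threaten; this simultaneously yields wickedness and ensures the limit $X$ lies in every full-measure higher $\Sigma^0_2$ class. The argument is a finite-injury-style construction over $\omega_1^{\mathrm{CK}}$, not a forcing argument, and its measure bookkeeping is on the positive side (staying inside large closed sets) rather than on the negative side (avoiding small open covers) that your proposal attempts.
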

For background see  the 2013 Logic Blog \cite{LogicBlog:13}

We say that a higher $\Delta^0_2$ approximation $\{X_s\}_{s \in \wck}$ of a sequence $X$ is \textit{wicked} if for any computable ordinal $s$ we have that $X$ is not in the closure of $\{X_t\}_{t < s}$. A sequence $X$ is \textit{wicked} if it has a \textit{wicked} higher $\Delta^0_2$ approximation. It is easy to prove that for every wicked sequence $X$ we have $\omega_1^X>\wck$:

\begin{lemma} 
Let $\{X_s\}_{s \in \wck}$ be a wicked $\Delta^0_2$ approximation of $X$. Then $\omega_1^X>\wck$.
\end{lemma}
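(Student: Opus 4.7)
The plan is to exhibit, from $X$, a well-ordering of $\omega$ of order type at least $\wck$; since such an ordering forces $\wck$ itself to be $X$-computable, this yields $\omega_1^X > \wck$. For each $n \in \omega$ set
\[
\sigma(n) \;=\; \sup\{\, s < \wck : \forall t < s,\ X_t \not\supset X\!\upharpoonright\! n \,\}.
\]
Because $\{X_s\}_{s \in \wck}$ converges pointwise to $X$, $\wck$-admissibility gives some $s_n < \wck$ past which $X_t \supset X\!\upharpoonright\! n$ for all $t \in [s_n, \wck)$, so $\sigma(n) \leq s_n < \wck$. Wickedness, applied at each computable $s$, produces an $n$ with $X_t \not\supset X\!\upharpoonright\! n$ for every $t < s$, hence $\sigma(n) \geq s$. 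Therefore $\sup_n \sigma(n) = \wck$.

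Next I would argue that, using $X$ as oracle, one can \emph{uniformly} in $n$ produce an $X$-computable well-ordering $W_n$ of length $\sigma(n)$. The higher $\Delta^0_2$ assumption makes $\{X_s\}$ uniformly computable given a notation for $s$, and comparison with $X\!\upharpoonright\! n$ is trivially $X$-computable; hence the predicate ``$X_t \supset X\!\upharpoonright\! n$'' is $X$-decidable in $t$. Given a notation $a \in \KO$, $X$ can then decide whether $|a|_{\KO} < \sigma(n)$ by checking, along the $\KO$-predecessors of $a$, whether some $X_t$ already extends $X\!\upharpoonright\! n$. Restricting $\KO$ to those $a$ giving a negative answer yields the desired $X$-computable initial segment of order type $\sigma(n)$.

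Finally, taking the $X$-computable ordered sum $W = \sum_n (W_n+1)$ gives an $X$-computable well-ordering of type $\sup_n \sigma(n) = \wck$, so $\wck$ is $X$-computable and $\omega_1^X > \wck$.

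The main obstacle is the middle step: one must make precise that the effectivity of a higher $\Delta^0_2$ approximation genuinely allows $X$ to produce, uniformly in $n$, an actual $\KO$-notation for $\sigma(n)$, not merely that $\sigma(n)$ happens to be computable. This is exactly where wickedness interacts with the chosen definition of ``higher $\Delta^0_2$'': any reasonable such definition renders the ``already caught up'' relation $X$-decidable on notations, delivering the uniformity.
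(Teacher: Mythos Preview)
Your function $\sigma$ is exactly the paper's $f(n)=$ least $s$ with $X_s\rest n = X\rest n$, and your argument that $\sup_n \sigma(n)=\wck$ from wickedness is the same as the paper's. The divergence is in how you finish.

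The gap is in the middle step, and your own closing paragraph does not actually close it. You claim that, given $a\in\KO$, the oracle $X$ can \emph{decide} whether $|a|<\sigma(n)$ by searching the $\KO$-predecessors of $a$ for some $b$ with $X_{|b|}\rest n = X\rest n$. But that search only \emph{semi}-decides the relation $|a|\ge\sigma(n)$: it is a $\Sigma^0_1(X)$ condition (an unbounded search over the c.e.\ set $\{b:b\le_\KO a\}$). Its negation $|a|<\sigma(n)$ is $\Pi^0_1(X)$, not $X$-decidable. So ``restricting $\KO$ to those $a$ giving a negative answer'' yields a set that is at best $\Pi^1_1(X)$ (you still need $a\in\KO$, which is $\Pi^1_1$), not an $X$-computable well-ordering. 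Nothing in the definition of a higher $\Delta^0_2$ approximation makes that universal quantifier over predecessors collapse to something $X$ can decide.

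The paper sidesteps this entirely. Rather than trying to produce $X$-computable well-orderings of type $\sigma(n)$, it observes that the very analysis you carried out shows $f$ is a \emph{total} $\Pi^1_1(X)$ function into $\wck$, hence $\Delta^1_1(X)$. Then one line of standard higher recursion theory finishes: if $\omega_1^X=\wck$, $\Sigma^1_1(X)$-bounding would force the range of $f$ below $\wck$, contradicting cofinality. In other words, what your construction really establishes is the $\Pi^1_1(X)$-definability of $\sigma$; the correct move is to invoke boundedness, not to push for $X$-computability of the individual $W_n$.
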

\begin{proof}
Let $f:\omega \rightarrow \wck$ be the $\Pi^1_1(X)$ function defined by setting $f(n)$ to the smallest ordinal $s$ so that $X_s \rest n = X \rest n$. As the function is total it is $\Delta^1_1(X)$. The fact that the approximation is wicked directly implies that $\sup_n f(n) = \wck$, and then $\omega_1^X>\wck$.
\end{proof}

 The goal of this section is to show that there is a wicked weakly-2-random sequence. Let $\{S_i\}_{i \in \omega}$ be an enumeration of all the higher $\Sigma^0_2$ sets. For each $S_i$ and each $j$ let us define the $\Sigma^1_1$ closed set $F_{i,j}$ so that $S_i=\bigcup_{j} F_{i, j}$.\\

\noindent \textbf{Sketch of the proof:}\\

We will build the sequence $X$ as a limit point of sequences $\{X_s\}_{s < \wck}$. Each $X_s$ is built as a limit of a sequence of strings $\{\s^n_s\}_{n < \omega}$ of bigger and bigger length.\\ 

At each stage we will ensure that $X_s$ is in some sense \textit{higher weakly-2-random at stage $s$}. What do we mean by this ? For some $s$ and some $n$, as long as $\lambda(S_n[s])=1$, we believe that $X_s$ should belong to $S_n[s]$. If at some point we have $\lambda(S_n[s])<1$ then $n$ is removed from the set of indices that we use to make $X_s$ weakly-2-random.\\ 

Concreatly we have at each stage $s$ a set of indices $\{e_n\}_{n \in \omega}$ which are initialized at stage 0 with $e_n=n$. Suppose that at stage $s$ we have for each $n$ that $\lambda(S_{e_n}[s])=1$. Then it is easy to build a $\Delta^1_1$ sequence $X_s$ in $\bigcap_n S_{e_n}[s]$:\\ 

We start by an arbitrary $\s^0$ and suppose that $\lambda(F_{e_{0},d_{0}} \cap \s^0) \geq \varepsilon_0/2$. Assuming that for some $n$ we have $\lambda(\bigcap_{k \leq n} F_{e_k,d_k} \cap \s^n) \geq \varepsilon_0/2$, we then continue recursively the construction as follow:\\

\begin{enumerate}
\item[Step 1:] We find one strict extention $\s^{n+1}_s$ of $\s^n_s$ so that $\lambda(\bigcap_{k\leq n} F_{e_k,d_k} \cap \s^{n+1}_s)[s] \geq \varepsilon_{n+1}$.\\

\item[Step 2:] We search for a closed set $F_{e_{n+1},d_{n+1}}$ inside $S_{e_{n+1}}$ so that $\lambda(\bigcap_{k\leq n+1} F_{e_k,d_k} \cap \s^{n+1}_s)[s] \geq \varepsilon_{n+1} / 2$. \\
\end{enumerate}

This way we build the whole sequence $X_s$, and the measure requirement that we satisfy at each step allow us to conclude that $X_s \in \bigcap_n S_{e_n}[s]$. To have that the $X_s$ converge to some $X$, we have to keep the chosen strings and closed sets at stage $s+1$ equal if possible to those of stage $s$. When do we have to change them ? three things can happend :\\


\begin{enumerate}

\item We might have $\lambda(S_{e_n})[s]=1$ for all $s < t$ but $\lambda(S_{e_n})[t]<1$.\\

\item We might have $\lambda(\bigcap_{k\leq n} F_{e_k,d_k} \cap \s^{n})[t] \geq \varepsilon_n / 2$ and $\lambda(\bigcap_{k\leq n} F_{e_k,d_k} \cap \s^{n+1})[s] \geq \varepsilon_{n+1}$ for all $s< t$ but $\lambda(\bigcap_{k\leq n} F_{e_k,d_k} \cap \s^{n+1})[t] < \varepsilon_{n+1}$\\

\item We might have $\lambda(\bigcap_{k\leq n} F_{e_k,d_k} \cap \s^{n+1})[t] \geq \varepsilon_{n+1}$ and $\lambda(\bigcap_{k\leq n+1} F_{e_k,d_k} \cap \s^{n+1})[s] \geq \varepsilon_{n+1} / 2$ for all $s<t$ but $\lambda(\bigcap_{k\leq n+1} F_{e_k,d_k} \cap \s^{n+1})[t] < \varepsilon_{n+1} / 2$\\
\end{enumerate}

If (1) happends then the index $e_n$ is setted to some index $a$ so that $\lambda(S_a)=1$ and then $e_n$ can change at most once. If (2) happends it is the responssability of the string $\s^{n+1}$ to change. If (3) happends it is the responsability of the index $d_{n+1}$ to change.\\ 

In (2), it is easy to chose $\varepsilon_{n+1}$ in a way that a string which fits the requirement always exists. Then it is enougth to pick them in lexicographic order to be sure that we have one which fits the requirement after a finite number of change.\\

In (3) as long as $\lambda(S_{e^{n+1}})=1$ we are also sure that we will change only finitely often of index $d_{n+1}$. However if $\lambda(S_{e^{n+1}})<1$ it can happend that $d_{n+1}$ will change infinitely often at stage $s_1<s_2<\dots$ before we have at stage $t=\sup_n s_n$ that $\lambda(S_{e^{n+1}})[t]<1$ (and then that $e^{n+1}$ is set to $a$).\\

There is nothing we can do to prevent those infinitely many changes, which could lead as well to infinitely many changes of the string $\s_{n+1}$. However we can still ensure that if this happends, the string $\s_{n}$ is banished, so that the final $X$ is still wicked.\\

To do so, we need to chose the $\varepsilon_n$ in a way that at least two extentions satisfying the requirements exists. So that if one is banished, still one remains. before starting the formal proof, we give a lemma which will help us to chose the right $\varepsilon_n$ in the construction.
%
%
%
%
%
%

\begin{lemma} \label{nicelemma}
let $\s$ be a string and $F$ a closed set so that $\lambda(F \cap [\s]) \geq 2^{-|\s|-n}$. Then there is at least $2$ extensions $\tau_1,\tau_2$ of $\s$ of length $|\s|+n+1$ so that for $i \in \{1,2\}$ we have $\lambda(F \cap [\tau_i]) \geq 2^{-|\tau_i| - n-1}$.
\end{lemma}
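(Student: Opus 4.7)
The plan is to prove this by a direct pigeonhole/counting argument using the $\sigma$-additivity of $\lambda$. There are exactly $2^{n+1}$ extensions of $\sigma$ of length $|\sigma|+n+1$, and
\[
\lambda(F\cap [\sigma]) \;=\; \sum_{\tau \supseteq \sigma,\ |\tau|=|\sigma|+n+1} \lambda(F\cap [\tau]),
\]
so the strategy is to show that if too few of the $\tau$'s carry mass $\ge 2^{-|\tau|-n-1} = 2^{-|\sigma|-2n-2}$, then this sum is forced below the assumed lower bound $2^{-|\sigma|-n}$, contradicting the hypothesis.

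More precisely, I would assume for contradiction that at most one extension $\tau^*$ satisfies $\lambda(F\cap [\tau^*]) \ge 2^{-|\tau^*|-n-1}$. For $\tau^*$ itself I use the trivial bound $\lambda(F\cap [\tau^*]) \le \lambda([\tau^*]) = 2^{-|\sigma|-n-1}$, and for each of the remaining $2^{n+1}-1$ extensions $\tau$ the failure of the condition gives $\lambda(F\cap [\tau]) < 2^{-|\sigma|-2n-2}$. Adding these bounds,
\[
\lambda(F\cap [\sigma]) \;<\; 2^{-|\sigma|-n-1} + (2^{n+1}-1)\cdot 2^{-|\sigma|-2n-2}.
\]
A quick arithmetic simplification (writing $a = 2^{-|\sigma|-n}$) turns the right-hand side into $a - 2^{-|\sigma|-2n-2} < a = 2^{-|\sigma|-n}$, contradicting the assumption $\lambda(F\cap [\sigma]) \ge 2^{-|\sigma|-n}$. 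Hence at least two such extensions exist.

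There is really no obstacle beyond bookkeeping the exponents: the key balancing fact is that $2^{n+1}\cdot 2^{-|\sigma|-2n-2} = 2^{-|\sigma|-n-1}$, which is exactly half of the hypothesis mass, leaving room for one "full" extension $\tau^*$ of measure up to $2^{-|\sigma|-n-1}$ but forcing a second one as soon as we want to account for the remaining half of the mass. If desired, one could phrase this symmetrically as: the set of bad extensions $\tau$ (with $\lambda(F\cap [\tau]) < 2^{-|\tau|-n-1}$) contributes at most $2^{n+1}\cdot 2^{-|\sigma|-2n-2} = 2^{-|\sigma|-n-1}$, so the good extensions must jointly contribute at least $2^{-|\sigma|-n-1}$, and since each contributes at most $2^{-|\sigma|-n-1}$ there must be at least two of them.
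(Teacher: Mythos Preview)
Your proof is correct and is essentially identical to the paper's own argument: both assume at most one extension meets the bound, use the trivial estimate $\lambda(F\cap[\tau^*])\le 2^{-|\sigma|-n-1}$ for that one and the failed bound for the remaining $2^{n+1}-1$ extensions, and obtain the same sum $2^{-|\sigma|-n-1}+(2^{n+1}-1)2^{-|\sigma|-2n-2}=2^{-|\sigma|-n}-2^{-|\sigma|-2n-2}<2^{-|\sigma|-n}$. The only cosmetic difference is that the paper keeps a $\le$ until the final step, whereas you carry the strict inequality through from the bad extensions; both are fine.
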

\begin{proof}
We have that $\lambda(F \cap [\s])=\sum_{\tau_i}\lambda(F \cap [\tau_i])$. Suppose that for strictly less than $2$ extensions of length $|\s|+n+1$ we have $\lambda(F \cap [\tau_i]) \geq 2^{-|\tau_i| - n-1}$. Then we have:
$$
\begin{array}{rcl}
\sum_{\tau_i}\lambda(F \cap [\tau_i])&\leq&2^{-|\s|-n-1} + (2^{n+1}-1)2^{-|\tau_i| - n-1}\\
&\leq&2^{-|\s|-n-1}+2^{n+1}2^{-|\s| - 2n - 2} - 2^{-|\s| - 2n-2}\\
&\leq&2^{-|\s|-n-1}+2^{-|\s| - n - 1} - 2^{-|\s| - 2(n+1)}\\
&<&2^{-|\s|-n}
\end{array}
$$
which is a contradiction.
\end{proof}

\noindent We now start the formal proof.\\

\noindent \textbf{The construction}:\\

Let $\{S_i\}_{i \in \omega}$ be an enumeration of all the higher $\Sigma^0_2$ sets. For each $S_i$ and each $j$ let us define the $\Sigma^1_1$ closed set $F_{i,j}$ so that $S_i=\bigcup_{j} F_{i, j}$.\\

Set $q_{0}=1$ and $m_0=1$ then recursively set $m_{n+1}=q_n+2$ and $q_{n+1}=2q_{n}+4-m_n$. Finally, let $a$ be an integer so that $\lambda(S_a)=1$. \\

At stage $0$, at substage $0$, set $P^0_0$ to be the set of strings of length $m_0$, ordered lexicographically. Then set $\s^{0}_{0}$ to be the first string of $P^0_0$, set $e^0_0=0$, set $d^0_0=0$. At substage $n+1$, set $P^{n+1}_0$ to be the set of strings of length $m_{n+1}$, ordered lexicographically. Then set $\s^{n+1}_{0}$ to be the first string of $P^{n+1}_0$ extending $\s^{n}_{0}$, set $e^{n+1}_0=n+1$, set $d^{n+1}_0=0$.\\

At successor stage $s+1$, set by convention $\s^{-1}_{s+1}=\epsilon$ and $F_{e^{-1}_{s+1},d^{-1}_{s+1}} = 2^\omega$ (in order to avoid making a difference between substage $0$ and other substages). At substage $n$ (starting from $0$), if $\lambda(S_{e^{n}_{s}})[s+1]=1$ set $e^n_{s+1}=e^n_{s}$ and $P^n_{s+1}=P^n_{s}$, otherwise set $e^n_{s+1}$ to be equal to $a$ and $P^n_{s+1}=P^n_{s}-\{\s^n_{s}\}$. Then set $\s^n_{s+1}$ to be the first string $\s$ of $P^n_{s+1}$ extending $\s^{n-1}_{s+1}$ so that $\lambda(\bigcap_{k \leq n-1} F_{e^{k}_{s+1},d^{k}_{s+1}} \cap [\s])[s+1] \geq 2^{-q_{n}}$.\\

Still at substage $n$, if $\lambda(\bigcap_{k \leq n} F_{e^{k}_{s},d^{k}_{s}} \cap [\s^{n}_{s+1}])[s+1] \geq 2^{-q_n-1}$ set $d^n_{s+1}=d^n_{s}$. Otherwise set $d^n_{s+1}$ to be the smallest $d$ bigger than $d^n_{s}$ so that $\lambda(\bigcap_{k \leq n-1} F_{e^{k}_{s+1},d^{k}_{s+1}} \cap F_{e^{n}_{s+1},d} \cap [\s^{n}_{s+1}])[s+1] \geq 2^{-q_n-1}$.\\


Finally after every substage, define $A_{s+1}$ to be the unique element in $\bigcap_n [\s^n_{s+1}]$.\\

At limit stage $s$, compute a sequence of ordinals so that $s=\sup_m s_m$. By convention, like at successor stage set $\s^{-1}_{s}=\epsilon$ and $F_{e^{-1}_{s+1},d^{-1}_{s+1}} = 2^\omega$. Then for each $n\geq 0$ set $e^n_{s}$ to be the convergence value of $\{e^n_{s_m}\}_{m \in \omega}$ and set $P^n_s$ to be the convergence value of $\{P^n_{s_m}\}_{m \in \omega}$. (among other things we will have to prove that we always have convergence). \\

At substage $n$, if $\{\s^{n}_{s_m}\}_{m \in \omega}$ does not converge set $\s^{n}_s$ to be the first string of $P^n_s$ extending $\s^{n-1}_s$, otherwise set $\s^{n}_s$ to be the convergence value. If $\{d^{n}_{s_m}\}_{m < \omega}$ does not converge, set $d^{n}_{s}$ to $0$, otherwise set it to its convergence value.\\

Finally after every substage, define $A_{s}$ to be the unique element in $\bigcap_n [\s^n_{s}]$.\\
\\
\textbf{The verification}:\\

\noindent Claim 1: For every $n$ the sequence $\{e^n_t\}_{t < \wck}$ can change at most once:\\

Fix some $n$ and suppose that there is a stage $s$ for which $e^n_{s+1} \neq e^n_{s}$. Let $s$ be the smallest such stage. By construction we have that $e^n_{s+1}=a$ and as $\lambda(S_{a})=1$, we can prove by induction on stages that for any $t>s$ we have $e^n_{t}=a$. this implies that for the sequence $\{e^n_t\}_{t < \wck}$ can change at most once.\\

\noindent Claim 2: At every stage $s$ the set $\{e^n_s\ |\ n \in \omega\}$ contains all the indices $e$ such that $\lambda(S_e)=1$.\\

It is true at stage $0$ as $\{e^n_0\ |\ n \in \omega\}=\omega$. Suppose it is true at stage $s$. At stage $s+1$, if $e^n_{s+1}$ is different from $e^n_{s}$ it is by construction because $\lambda(S_{e^n_{s}})[s+1]<1$. Then using induction hypothesis, Claim 2 is also true at stage $s+1$. At limit stage $s=\sup_n s_m$, we have using claim 1 that the sequence $\{e^n_{s_m}\ |\ n \in \omega\}_{s_m < s}$ converges to $\{e^n_{s}\ |\ n \in \omega\}_{s}$ and then by induction hypothesis we have that Claim 2 is true at limit stage $s$.\\

\noindent Claim 3: At every stage $s$ the sequence set $\bigcap_n [\s^n_s]$ contains one and only one element $A_s$.\\ 

This is obvious by construction, as at any stage $s$ and for any $n$ the string $\s^{n+1}_s$ extends $\s^n_s$.\\

\noindent Claim 4: For every stage $s$, any string $\tau$ of size $m_n$ and any closed set $F$ such that $\lambda(F \cap [\tau]) \geq 2^{-q_{n}-1}$, there is a string $\s\in P^{n+1}_{s+1}$ which extends $\tau$ so that $\lambda(F \cap [\s]) \geq  2^{-q_{n+1}}$.\\

Suppose that $\lambda(F \cap [\tau]) \geq 2^{-q_{n}-1}$ for $|\tau|=m_n$. Then $\lambda(F \cap [\tau]) \geq 2^{-m_n-(q_{n}+1-m_n)}$. Using lemma \ref{nicelemma} we have two strings $\tau_1$ and $\tau_2$ of length $m_n+(q_{n}+1-m_n)+1=q_n+2=m_{n+1}$ so that for $i \in \{1,2\}$ we have $\lambda(F \cap [\tau_i]) \geq 2^{-(q_{n}+2)-(q_{n}+1-m_n)-1} = 2^{-2q_{n}-4+m_n} = 2^{-q_{n+1}}$. Also as $\tau_1$ and $\tau_2$ are of length $m_{n+1}$ they belong to $P^{n+1}_{0}$. By construction and by Claim 1, at any stage $s$ we have that $P^{n+1}_{0}$ contains at most one more string than $P^{n+1}_{s+1}$. Then at any stage $s$ we have at least one string $\s \in P^{n+1}_{s+1}$ which extends $\tau$ and so that $\lambda(F \cap [\s]) \geq  2^{-q_{n+1}}$.\\

\noindent Claim 5: The sequence $A_s$ is \textit{wicked}:\\

For a given $n$, let $B_n$ be the set of stages $s$ so that $\s^{n}_s=\s^{n}$ if the sequence $\{\s^{n}_s\}_{s < \wck}$ converges to $\s^{n}$ and $B_n=\emptyset$ if the sequence $\{\s^{n}_s\}_{s < \wck}$ does not converge. Let us prove by induction on $n$ that:\\
\begin{enumerate}
\item[(1)] The set $B_n$ is not empty.\\
\item[(2)] The sequence of sets $\{d^{k}_t\ |\ k\leq n\}_{t \in B_{n}}$ can change at most finitely often.\\
\item[(3)] The sequence of sets $\{\s^{n+1}_t\}_{t \in B_n}$ can change at most finitely often.\\
\end{enumerate}

The sentence (1) is true for $n=-1$, as $\s^{-1}_t=\epsilon$ for every $t$. The sentence (2) is true for $n=-1$ as the the sequence $\{d^{-1}_t\}_{t < \wck}$ never changes. The sentence (3) is true for $n=-1$ as by claim 1, the sequence $\{\s^0_t\}_{t < \wck}$ can change at most once.\\ 

Let us suppose that (1), (2) and (3) are true up to $n$ and let us show that (1) is true at level $n+1$. By induction hypothesis (1) at level $n$ we have a stage $s$ so that for all $t>s$ we have $\s^n_t=\s^n_s$. Also by induction hypothesis (3) at level $n$ we have a stage $r>s$ so that for all $t>r$ we have $\s^{n+1}_r=\s^{n+1}_t$. Then $(1)$ is true at stage $n+1$.\\

Let us suppose that (1), (2) and (3) are true at level $n$, that (1) is true at level $n+1$ and let us show that (2) is true at level $n+1$. By induction hypothesis (2) at level $n$ and the fact that $B_{n+1} \subseteq B_{n}$ we have that :\\
\begin{enumerate}
\item[(4)] The sequence of sets $\{d^{k}_t\ |\ k\leq n\}_{t \in B_{n+1}}$ can change at most finitely often.\\
\end{enumerate}
By definition of $B_{n+1}$ we have that:\\
\begin{enumerate}
\item[(5)] The sequence $\{\s^{n+1}_t\}_{t \in B_{n+1}}$ can change at most finitely often.\\
\end{enumerate}
Also by claim 1 we have that:\\
\begin{enumerate}
\item[(6)] The sequence of sets $\{e^{k}_t\ |\ k \leq n+1\}_{t \in B_{n+1}}$ can change at most finitely often.\\
\end{enumerate}

Suppose for contradiction that the sequence $\{d^{n+1}_t\}_{t \in B_{n+1}}$ changes infinitely often. From (4), (5) and (6) we can deduce that there are two stages $r_1 < r_2$ so that :\\
\begin{enumerate}
\item[(7)] The sequence of sets $\{d^{k}_t\ |\ k \leq n\}_{t \in [r_1, r_2[ \cap B_{n+1}}$ never changes.
\item[(8)] The sequence of sets $\{e^{k}_t\ |\ k \leq n+1\}_{t \in [r_1, r_2[ \cap B_{n+1}}$ never changes.
\item[(9)] The sequence $\{\s^{n+1}_t\}_{t \in [r_1, r_2[ \cap B_{n+1}}$ never changes.
\item[(10)] The sequence $\{d^{n+1}_t\}_{t \in [r_1, r_2[ \cap B_{n+1}}$ changes infinitely often.
\item[(11)] We have $r_1 \in B_{n+1}$ and we have that $r_2$ is the smallest stage bigger than $r_1$ and infinitely many stages in $B_{n+1}$ such that (10) is true.\\
\end{enumerate}

Let us call the unique values for (7), (8) and (9) respectively $\{d^{k}\ |\ k \leq n\}$, $\{e^{k}\ |\ k \leq n+1\}$ and $\s^{n+1}$ (note that $\s^{n+1}$ is the convergence value for the sequence $\{\s^{n+1}_t\}_{t < \wck}$). From (7), (8) and (9) we deduce that:\\
\begin{enumerate}
\item[(12)] $\lambda(\bigcap_{k \leq n} F_{e^{k}, d^{k}} \cap [\s^{n+1}])[t+1] \geq 2^{-q_{n+1}}$ for $t \in [r_1,r_2[\cap B_{n+1}$.\\
\end{enumerate}

As otherwise, by construction, at least one of the three sequences (7), (8) or (9) would not be constant. Also if $\lambda(S_{e^{n+1}})[r_2]=1$, we deduce from (12) that there is necessarily some $d$ so that for any $t \in [r_1,r_2[\cap B_{n+1}$ we have $\lambda(\bigcap_{k \leq n} F_{e^{k}, d^{k}} \cap F_{e^{n+1}, d} \cap [\s^{n+1}])[t+1] \geq q_{n+1}/2$. But using (10) we know that no such $d$ exists, as otherwise, by construction, the sequence of (10) would not change infinitely often. Then we have that $\lambda(S_{e^{n+1}})[r_2]<1$. Also $r_2$ is the smallest such stage, as otherwise, by construction the sequence (8) would not be constant. Also from (9), by construction and the fact that $r_2$ is a limit stage (by definition of $r_2$) we have that $\s^{n+1}=\s^{n+1}_{r_2}$ and then that $r_2 \in B_{n+1}$. Then still by construction we have that $P^{n+1}_{r_2+1}$ does not contain $\s^{n+1}_{r_2}$ at stage $r_2+1$, wich contradicts the fact that $r_2 \in B_{n+1}$. Then (2) is true at level $n+1$.\\

Suppose now that (1), (2) and (3) are true at level $n$, suppose also that (1) and (2) are true at level $n+1$ and let us show that (3) is true at level $n+1$. Suppose for contradiction that the sequence $\{\s^{n+2}_t\}_{t \in B_{n+1}}$ changes infinitely often. From induction hypothesis (2) at level $n+1$ we have that: \\

\begin{enumerate}
\item[(13)] The sequence of sets $\{d^{k}_t\ |\ k \leq n+1\}_{t \in B_{n+1}}$ can change at most finitely often.\\
\end{enumerate}

As before we have that $(5)$ is true and that $(6)$ is true, this time for $k \leq n+2$. We can deduce from that and from (13) that there are two stages $r_1 < r_2$ so that :\\

\begin{enumerate}
\item[(14)] The sequence of sets $\{d^{k}_t\ |\ k \leq n+1\}_{t \in [r_1, r_2[ \cap B_{n+1}}$ never changes.\\
\item[(15)] The sequence of sets $\{e^{k}_t\ |\ k \leq n+2\}_{t \in [r_1, r_2[ \cap B_{n+1}}$ never changes.\\
\item[(16)] The sequence $\{\s^{n+1}_t\}_{t \in [r_1, r_2[ \cap B_{n+1}}$ never changes.\\
\item[(17)] The sequence $\{\s^{n+2}_t\}_{t \in [r_1, r_2[ \cap B_{n+1}}$ changes infinitely often.\\
\end{enumerate}

Let us call the unique values for (14), (15) and (16) respectively $\{d^{k}\ |\ k \leq n+1\}$, $\{e^{k}\ |\ k \leq n+2\}$ and $\s^{n+1}$. From (14), (15) and (16) we deduce that:\\

\begin{enumerate}
\item[(18)] $\lambda(\bigcap_{k \leq n+1} F_{e^{k}, d^{k}} \cap [\s^{n+1}])[t+1] \geq 2^{-q_{n+1}-1}$ for $t \in [r_1,r_2[\cap B_{n+1}$.\\
\end{enumerate}

As otherwise, by construction, at least one of the three sequences (14), (15) or (16) would not be constant. Now, using Claim 4 with $\bigcap_{k \leq n+1} F_{e^{k}, d^{k}}[r_2]$ as the closed set $F$, we deduce that there is at least one string $\s \in P^{n+2}_{r_2}$ so that for any $t \in [r_1,r_2[\cap B_{n+1}$ we have $\lambda(\bigcap_{k \leq n+1} F_{e^{k}, d^{k}} \cap [\s])[t+1] \geq 2^{-q_{n+2}}$ which directly contradicts (17).\\

We now use (1) and (3) in order to prove Claim 5. By (1) we have that $X_s$ converges to $X$. By (3) we have for each $n$ that the sequence $\{\s^{n+1}_t\ |\ \s^{n+1}_t \succ \s^{n}\}_{t < \wck}$ can change at most finitely often. Suppose now for contradiction that there is some $t$ so that $X$ is a limit point of $\{X_s\}_{s < t}$. \\


Then using Claim 6 (see further) we have that $X$ is weakly-2-random and then not $\Delta^1_1$, so $X_t \neq X$. This implies that for some $n$ we have infinitely many stages $s_0<s_1<s_2< \dots$ so that $\s^n_{2i}=X \rest {m_n}$ and $\s^n_{2i+1} \neq X \rest {m_n}$. Let $n$ be the smallest such integer. We have that $n \geq 1$ as $\{\s_0\}_{s < \wck}$ changes at most finitely often. Also by minimality of $n$ we have some $k$ so that $\s^{n-1}_{s_j}=X \rest {m_{n-1}}$ for all $j\geq k$, which implies that the sequence $\{\s^{n}_t\ |\ \s^{n}_t \succ \s^{n-1}\}_{t < \wck}$ changes infinitely often. Thus $X$ is wicked.\\

\noindent Claim 6: The sequence $X$ is weakly-2-random:\\

At convergence we have a sequence of closed set $\{F^n=F_{e^n,d^n}\}_{n \in \omega}$ so that each $F_{e^i,d^i}$ is a component of $S_{e^i}$. Consider also a sequence of ordinal $\{s_n\}_{n \in \omega}$, cofinal in $\wck$, and the closed set $A=\{X_{s_n}\}_{n < \omega} \cup \{X\}$. We have for any $n$ and any stage $t$ that the intersection of closed set $\bigcap_{s < t} \bigcap_{m\leq n} F_m[s] \cap A$ is not empty because by construction there is some stage $s_k$ so that $X_{s_k} \in \bigcap_{m\leq n} F_m[t]$. Then also $\bigcap_{t < \wck} \bigcap_{n} F_n[t] \cap A$ is not empty, and as $X$ is the only non $\Delta^1_1$ point of $A$ we have that $X$ belongs to $\bigcap_{n} F_n$. This combined with Claim 2 allow us to conclude that $X$ is weakly-2-random.

\section{Yu: A proof of a result by Kripke on the upward closure of null sets of hyper degrees}

In \cite{Sacks:69}, the following result is  credited to Kripke.
\begin{theorem}\label{theorem: Kripke}
Let  $A$ be  a null set of hyperdegrees such that $\mathbf{0}\not\in A$. Then the set $\mathcal{U}_h(A)=\{x\mid \exists y\in A(x\geq_h y)\}$ is also null.
\end{theorem}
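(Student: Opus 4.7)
The plan is to argue by contradiction. Set $N = \bigcup A$, so $N$ is a Lebesgue-null subset of $2^\omega$, saturated under $\equiv_h$, and (since $\mathbf{0} \notin A$) disjoint from $\mathrm{HYP}$. Suppose towards a contradiction that $\lambda(\mathcal{U}_h(A)) > 0$. The relation $\{(x,y) : y \leq_h x\}$ is $\Sigma^1_1$ with countable $x$-sections (namely $\mathrm{HYP}(x)$), so by Lusin--Novikov it decomposes as the union of graphs of countably many hyperarithmetic functionals $\{\Phi_e\}_{e \in \omega}$, yielding
$$\mathcal{U}_h(A) \ = \ \bigcup_{e \in \omega} \Phi_e^{-1}(N).$$
By countable subadditivity, some $\Phi_{e_0}^{-1}(N)$ has positive Lebesgue measure.

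Next I would build a $G_\delta$ null envelope $G$ of $N$ that still avoids $\mathrm{HYP}$. Enumerating $\mathrm{HYP} = \{h_j\}_{j \in \omega}$, for each $k$ I would choose an open $U_k \supseteq N$ with $\lambda(U_k) < 2^{-k}$ and $\{h_0,\dots,h_k\} \cap U_k = \emptyset$. This is possible because each $h_j$ lies outside the null set $N$: cover each point of $N$ by a small neighbourhood missing the finitely many $h_j$'s that must be avoided, then extract a Lindel\"of subcover. Setting $G = \bigcap_k U_k$ produces a $G_\delta$ Lebesgue-null set disjoint from $\mathrm{HYP}$ with $N \subseteq G$, so $\Phi_{e_0}^{-1}(G)$ is a Borel set of positive Lebesgue measure.

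The crux is then a hyperarithmetic basis theorem of Sacks: every $\Sigma^1_1$ set of positive Lebesgue measure contains a hyperarithmetic member (this is the measure-theoretic counterpart of the low basis theorem, one of Sacks' measure-uniformity results from the same 1969 paper that states Kripke's theorem). Applied to $\Phi_{e_0}^{-1}(G)$, this produces a hyperarithmetic $x_0$ with $\Phi_{e_0}^{x_0} \in G$. Since $x_0$ and the index $e_0$ are both hyperarithmetic, $\Phi_{e_0}^{x_0}$ is also hyperarithmetic, so $G \cap \mathrm{HYP} \neq \emptyset$, contradicting the choice of $G$.

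The main obstacle is that the envelope $G$ is built from $N$ and hence carries a parameter $p$ that is not itself hyperarithmetic; a direct application of the basis theorem will only deliver $x_0$ hyperarithmetic \emph{relative to} $p$. To remove this dependence I would use the saturation of $N$ under $\equiv_h$: the hyperdegree of $\Phi_{e_0}^{x_0}$ already lies in $A$, so replacing $\Phi_{e_0}^{x_0}$ by a suitable representative of its hyperdegree --- or, alternatively, replacing the basis-theorem step by a uniform cover argument directly on the $\Sigma^1_1$ relation $\{(x,y) : y \leq_h x\}$ \emph{before} taking the envelope, in the spirit of Sacks' measure-theoretic uniformity --- should let me recover an absolutely hyperarithmetic witness inside $N$, completing the contradiction.
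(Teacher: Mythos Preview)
Your approach has a genuine gap that you correctly identify but do not close. After applying the basis theorem relative to the parameter $p$ coding $G$, you obtain $x_0 \in \mathrm{HYP}(p)$ with $\Phi_{e_0}^{x_0} \in G$. Your first proposed fix asserts that ``the hyperdegree of $\Phi_{e_0}^{x_0}$ already lies in $A$'', but this is unjustified: you only know $\Phi_{e_0}^{x_0} \in G$, and $G$ properly contains $N$. If instead you try to apply the basis theorem to $\Phi_{e_0}^{-1}(N)$ directly, that set has no definability bound at all, so no basis theorem applies. Your second proposed fix (``a uniform cover argument \dots\ in the spirit of Sacks' measure-theoretic uniformity'') is not an argument but a hope; Sacks' uniformity gives that a \emph{single} nonhyperarithmetic $y$ has a null upper cone, and the whole difficulty of Kripke's theorem is passing from single cones to an uncountable union of cones. (A minor side issue: the relation $\{(x,y): y \le_h x\}$ is $\Pi^1_1$, not $\Sigma^1_1$; your Lusin--Novikov step can be salvaged on the conull set $\{x:\omega_1^x=\omega_1^{\mathrm{CK}}\}$ via the standard hyperarithmetic reductions, but this is not the main problem.)

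The paper closes exactly this gap with a different and much stronger tool: the higher Demuth theorem of Chong and Yu, which says that if $x$ is $\Pi^1_1(z)$-random and $y \le_h x$, then $y \equiv_h y_0$ for some $\Pi^1_1(z)$-random $y_0$. The proof then runs as follows. Since $A$ is null, choose $z$ so that every element of $\bigcup A$ is non-$\Pi^1_1(z)$-random. If $\mathcal{U}_h(A)$ were non-null it would contain a $\Pi^1_1(z)$-random $x$; pick $y \in \bigcup A$ with $y \le_h x$. Higher Demuth produces a $\Pi^1_1(z)$-random $y_0 \equiv_h y$, and since $A$ is degree-invariant, $y_0 \in \bigcup A$ --- contradicting the choice of $z$. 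Note how this argument sidesteps your parameter problem entirely: the parameter $z$ is chosen \emph{once} to make $A$ null in the $\Pi^1_1(z)$-sense, and the Demuth-type theorem transports randomness \emph{downward} through $\le_h$ into the hyperdegree of $y$. Your outline is groping toward precisely this ``randomness goes down to hyperdegrees below'' phenomenon, but that is a substantial theorem, not something recoverable from Lusin--Novikov plus a basis theorem.
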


Theorem \ref{theorem: Kripke} significantly strengthened Sacks' result that the set of hyperdegrees hyperarithmetically above a fixed nonhyperarithmetic real is null.   No published proof of the theorem was  known. Here we give a short proof based on a recent result due to Chong and Yu, a hyperarithmetic version of Demuth's theorem on downward closure of random degrees.

\begin{proof}
In \cite{Chong.Yu:nd}, it was proved that if $x$ is $\Pi^1_1$ random and $y\leq_h x$, then $y\equiv_h y_0$ for some $\Pi^1_1$ random real $y_0$. By partially relativizing the proof, we have that if $x$ is $\Pi^1_1(z)$ random and $y\leq_h x$, then $y\equiv_h y_0$ for some $\Pi^1_1(z)$ random real $y_0$.

Now suppose that $A$ is a null set of hyperdegrees such that $\mathbf{0}\not\in A$ and  $\mathcal{U}_h(A)=\{x\mid \exists y\in A(x\geq_h y)\}$ is not null (may be even non-measurable!). Then there is some real $z$ so that $A$ is a subset of non-$\Pi^1_1(z)$-random reals. Since $\mathcal{U}_h(A)$ is not null, it contains some real $x$ which is $\Pi^1_1(z)$-random. So there must be some $y\in A$ so that $y\leq_h x$. By Chong and Yu's result, there must be some $y_0\equiv_h y$ so that $y_0$ is $\Pi^1_1(z)$-random. But then $y_0\in A$, a contradiction. 
\end{proof}

The higher Demuth's Theorem can also be used to construct a null maximal antichain of hyperdegrees and Turing degrees. This answers  question 5.3 of Jockusch in \cite{Yu06}. Also it can be used to solve question 5.1 of Jockusch in \cite{Yu06}.

\section{Yu: higher randomness over continuous measures}

The following result is due to Chong and Yu.
\begin{theorem}\label{theorem: cy theorem on pi11 measure}
 $x\in \{z\mid z\in L_{\omega_1^z}\}$ if and only if $x$ cannot be $\Pi^1_1$-random relative to any continuous measure.
\end{theorem}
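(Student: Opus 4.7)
The plan is to prove the two directions separately, treating the result as the higher-recursion analogue of the Reimann--Slaman theorem that a real is non-recursive iff it is Martin-L\"of random for some continuous Borel probability measure. The role of ``recursive'' is played by the self-constructibility predicate $x \in L_{\omega_1^x}$, and the role of Martin-L\"of randomness by $\Pi^1_1$-randomness.

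\emph{Forward direction.} Suppose $x \in L_{\omega_1^x}$ and let $\mu$ be an arbitrary continuous probability measure on $2^\omega$. The aim is to enclose $x$ in a $\Pi^1_1(\mu)$-null class. The class
\[
A := \{y \in 2^\omega : y \in L_{\omega_1^y}\}
\]
is $\Sigma^1_1$ and contains $x$. For any continuous $\mu$ this class is $\mu$-null, because it decomposes as the union, over countable admissible ordinals $\alpha$, of the countable sets $L_\alpha \cap 2^\omega$, and $\mu$ has no atoms. One then invokes a $\Sigma^1_1$-covering principle to enclose $A$ in a $\Pi^1_1(\mu)$-class of $\mu$-measure zero. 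The uniformity in $\mu$ of the decomposition, combined with $\Sigma^1_1$-uniformisation, is what makes the covering step go through and yields a genuine $\Pi^1_1(\mu)$-null test capturing $x$.

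\emph{Reverse direction.} Assume $x \notin L_{\omega_1^x}$. I construct a continuous $\mu$ such that $x$ is $\Pi^1_1(\mu)$-random, via a tree-of-measures argument in the style of Reimann and Slaman. Enumerate the candidate $\Pi^1_1$-tests $(U_e^\mu)_{e \in \omega}$ parametric in $\mu$ and build $\mu$ in stages indexed below $\omega_1^x$. At stage $e$, having committed $\mu$ on a cylinder $[\sigma_e]$ with $\sigma_e \prec x$, refine the commitment along a longer prefix of $x$ so as to force $x \notin U_e^\mu$. The hypothesis $x \notin L_{\omega_1^x}$ enters precisely here: since $x$ is not captured by any countable admissible structure indexed below $\omega_1^x$, a $\Sigma^1_1$ search for a continuous extension of $\mu$ avoiding $U_e^\mu$ along $x$ is guaranteed to succeed. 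Limit stages are handled by taking coherent limits of the partial commitments.

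\emph{Main obstacle.} The most delicate step in the reverse direction is ensuring continuity of the limit $\mu$, i.e.\ $\mu(\{x\}) = 0$, since repeated commitments along $x$ could in principle pile all mass onto $\{x\}$. The remedy is to interleave every test-dodging substage with a splitting substage that redistributes the current mass onto two incomparable extensions of the stem. The assumption $x \notin L_{\omega_1^x}$ is exactly what guarantees that these splittings can be chosen $\Sigma^1_1$-uniformly through every limit below $\omega_1^x$: a self-constructible $x$ would at some stage force the splitting search to fail, collapsing the measure onto $\{x\}$ and violating continuity. This is the precise mechanism that aligns the two directions of the equivalence.
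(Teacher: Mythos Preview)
The paper does not actually prove this theorem: it states the result, attributes it to Chong and Yu (referencing their forthcoming book \cite{Chong.Yu:nd}), and then discusses it. So there is no proof in the paper to compare your argument against. What the paper \emph{does} say, however, is directly relevant to your reverse direction: it remarks that the Chong--Yu proof ``does not provide an explicit construction of $\lambda$'' when $x\notin L_{\omega_1^x}$, and it poses as an open question whether there is a constructive proof. Your stage-by-stage construction of $\mu$ along prefixes of $x$ is precisely such a constructive argument; if your sketch went through as written, you would be resolving that open question. That is a strong signal that the crucial step --- ``since $x$ is not captured by any countable admissible structure indexed below $\omega_1^x$, a $\Sigma^1_1$ search for a continuous extension of $\mu$ avoiding $U_e^\mu$ along $x$ is guaranteed to succeed'' --- is doing unjustified work. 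You have not explained why the hypothesis $x\notin L_{\omega_1^x}$ forces that search to terminate, nor why the interleaved splittings can be carried out $\Sigma^1_1$-uniformly. In the classical Reimann--Slaman theorem the analogous step is genuinely hard and proceeds via compactness of the space of measures rather than a direct inductive construction; there is no reason to expect the higher version to be easier.

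Your forward direction has the right shape but two slips. First, the class $A=\{y: y\in L_{\omega_1^y}\}$ is $\Pi^1_1$, not $\Sigma^1_1$ --- this is the Guaspari--Kechris--Sacks characterisation of the largest thin $\Pi^1_1$ set. That error is in your favour: since $A$ is already $\Pi^1_1$ (hence $\Pi^1_1(\mu)$) and $\mu$-null for continuous $\mu$, it is itself a test capturing $x$, and the ``$\Sigma^1_1$-covering'' step is unnecessary. Second, your countability argument is not one: writing $A$ as a union of the countable sets $L_\alpha\cap 2^\omega$ over \emph{uncountably} many countable admissible $\alpha$ does not yield a countable union. The set $A$ \emph{is} countable, but this requires the standard argument that every $y\in A$ is a $\Pi^1_1$-singleton.
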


By Theorem \ref{theorem: cy theorem on pi11 measure}, if $x\not\in L_{\omega_1^x}$, then $x$ is $\Pi^1_1$-random relative to some measure $\lambda$. However, the proof of Theorem \ref{theorem: cy theorem on pi11 measure} does not provide an explicit  construction of $\lambda$ provided $x\not\in L_{\omega_1^x}$.

If $x$ is hyperarithmetically equivalent to a $\Pi^1_1$-random real, then we can find a constructive proof. By (and the proof of ) higher Demuth Theorem and Reimann-Slaman Theorem, we have the following result.
\begin{proposition}\label{proposition: cy theorem pi11 random relative to hyp measure}
For any real $x$, $x\equiv_h r$ for some  $\Pi^1_1$-random real $r$ if and only if there is a hyperarithmetic continuous measure $\lambda$ so that $x$ is $\Pi^1_1$-random relative to measure $\lambda$.  
\end{proposition}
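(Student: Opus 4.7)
My plan is to prove both directions by transporting measures along hyperarithmetic reductions, mirroring the classical Reimann--Slaman argument for Turing degree and ML-randomness.

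For the direction ($\Leftarrow$), suppose $\lambda$ is a hyperarithmetic continuous measure and $x$ is $\Pi^1_1$-random with respect to $\lambda$. I would use the cumulative distribution function of $\lambda$ (identifying $2^\omega$ with $[0,1]$ in the usual way) to construct a hyperarithmetic measure-preserving map $\Psi\colon (2^\omega,\lambda)\to(2^\omega,\mu)$, where $\mu$ is Lebesgue measure. Because $\lambda$ is hyperarithmetic, $\Psi$ is hyperarithmetic, and because $\lambda$ is continuous, $\Psi$ is a bijection off a $\lambda$-null set with hyperarithmetic inverse off a $\mu$-null set. The higher analogue of randomness conservation under hyperarithmetic measure-preserving maps then gives that $r:=\Psi(x)$ is $\Pi^1_1$-random for $\mu$, and the two-sided hyperarithmetic relation yields $x\equiv_h r$.

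For the direction ($\Rightarrow$), suppose $x\equiv_h r$ with $r$ a $\Pi^1_1$-random real, and fix hyperarithmetic functionals $\Phi$ and $\Psi$ with $\Phi(r)=x$ and $\Psi(x)=r$. Set $\lambda := \Phi_*\mu$, the pushforward of Lebesgue measure under $\Phi$. Invoking (the proof of) the higher Demuth theorem, $\Phi$ may be chosen so that $\mu(\Phi^{-1}[\sigma])$ is uniformly hyperarithmetic in $\sigma$, making $\lambda$ a hyperarithmetic probability measure. Continuity of $\lambda$ follows from the injectivity of $\Phi$ on the $\Pi^1_1$-randoms: since $\Psi\circ\Phi$ is the identity on a $\mu$-conull set, each fibre $\Phi^{-1}(y)$ meets the $\Pi^1_1$-randoms in at most one point, so $\lambda\{y\}=\mu(\Phi^{-1}(y))=0$. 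Finally, randomness conservation along the hyperarithmetic measure-preserving map $\Phi$ gives that $x=\Phi(r)$ is $\Pi^1_1$-random with respect to $\lambda$.

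The main obstacle is producing a genuinely hyperarithmetic pushforward measure in the forward direction: an arbitrary hyperarithmetic reduction $\Phi$ only gives lower-semicomputable measures of open cylinders under pullback, which is insufficient. This is exactly the point at which the proof of the higher Demuth theorem is invoked, since its layered construction reorganises the reduction so that the preimage measures of basic clopen sets are uniformly hyperarithmetic. Once this is in hand, continuity of $\lambda$ and conservation of $\Pi^1_1$-randomness under $\Phi$ fall out easily from the a.e.\ invertibility recorded in the higher Demuth statement quoted above, completing the proposition.
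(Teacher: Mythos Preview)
Your overall approach is the right one and matches the paper's, which merely records that the proposition follows ``by (and the proof of) the higher Demuth Theorem and the Reimann--Slaman Theorem'' without further detail; what you have written is the natural way to unpack that citation. The $(\Leftarrow)$ direction via the cumulative distribution function is fine.

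There is, however, a real gap in your continuity argument for $(\Rightarrow)$. You assert that $\Psi\circ\Phi$ is the identity on a $\mu$-conull set, but the hypotheses only give $\Psi(\Phi(r))=r$ at the single point $r$; nothing in the higher Demuth proof arranges a.e.\ injectivity of $\Phi$. For instance, take $\Phi(z)=z$ on $[0]$ and $\Phi(z)=0^\omega$ on $[1]$, with $r\in[0]$ and $\Psi=\mathrm{id}$: then $x=r$, yet $\Phi_*\mu$ has an atom of mass $1/2$ at $0^\omega$. The repair is short and stays within your framework. The set $S=\{z:\Psi(\Phi(z))=z\}$ is hyperarithmetic and contains the $\Pi^1_1$-random $r$, so $\mu(S)>0$. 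Push forward the conditional measure $\mu_S=\mu(\,\cdot\cap S)/\mu(S)$ instead of $\mu$: since $\Phi$ is injective on $S$ (with left inverse $\Psi$), $\lambda:=\Phi_*\mu_S$ is atomless; it is hyperarithmetic because $S$, $\Phi$ and $\mu(S)$ are; and $r$ is $\Pi^1_1$-random for $\mu_S$ (any $\Pi^1_1$ $\mu_S$-null $N$ yields the $\Pi^1_1$ $\mu$-null set $N\cap S\not\ni r$), so $x=\Phi(r)$ is $\Pi^1_1$-random for $\lambda$ by the preservation step you already invoke.
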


The proof of Proposition \ref{proposition: cy theorem pi11 random relative to hyp measure} is constructive. In other words, if $r$ is $\Pi^1_1$ random and $x\equiv_h r$, then $\lambda$ can be constructed explicitly. So the proposition shed some light on the question whether there is a constructive proof of Theorem \ref{theorem: cy theorem on pi11 measure}. Also it can be asked that whether there is a ``level by level" proof of Theorem \ref{theorem: cy theorem on pi11 measure}. For example, if a continuous measure  $\lambda$ has a $\Pi^1_1$-presentation, then which reals can be $\Pi^1_1$-random relative to $\lambda$?

In summary, we have the following question:
\begin{question}
Is there a constructive proof of Theorem \ref{theorem: cy theorem on pi11 measure}? 
\end{question}

The question can also be lifted to constructible level. We have the following result.
\begin{proposition}\label{proposition: cy theorem l random relative to hyp measure}
For any real $x$, $x\equiv_L r$ for some  random real $r$ over $L$ if and only if there is a  continuous measure $\lambda$ in $L$ so that $x$ is random over $L$ relative to measure $\lambda$.
\end{proposition}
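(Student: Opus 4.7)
The proof will follow the same template as the preceding $\Pi^1_1$ proposition, with the hyperarithmetic notions replaced by their $L$ counterparts. The starting observation is that $x \equiv_L r$ is witnessed by a pair of Borel functions $\Phi,\Psi$ whose codes lie in $L$, with $\Phi(r)=x$ and $\Psi(x)=r$.

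For the $(\Rightarrow)$ direction, fix $r$ random over $L$ (for Lebesgue measure $\lambda_{\mathrm{Leb}}$) with $x \equiv_L r$, and take $\Phi,\Psi \in L$ as above. I would set $\lambda := \Phi_{\ast} \lambda_{\mathrm{Leb}}$, which is a Borel probability measure lying in $L$. To see $\lambda$ is continuous, note that on the $\lambda_{\mathrm{Leb}}$-conull set where $\Psi \circ \Phi$ is the identity, $\Phi$ is injective, so $\lambda(\{y\}) = \lambda_{\mathrm{Leb}}(\Phi^{-1}(y)) = 0$ for every $y$. Randomness conservation, applied inside $L$, then gives that $x = \Phi(r)$ is $\lambda$-random over $L$: any $L$-coded Borel $\lambda$-null set containing $x$ would pull back via $\Phi$ (a function in $L$) to an $L$-coded Borel $\lambda_{\mathrm{Leb}}$-null set containing $r$, contradicting randomness of $r$ over $L$.

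For the $(\Leftarrow)$ direction, suppose $x$ is $\lambda$-random over $L$ for some continuous $\lambda \in L$. The plan is to construct in $L$ a measure-preserving bijection (modulo null sets) $h \colon (2^\omega,\lambda) \to (2^\omega,\lambda_{\mathrm{Leb}})$ and then set $r := h(x)$. Since $\lambda$ is atomless and lies in $L$, the usual CDF-based construction based on $F_\lambda(Z) = \lambda(\{W : W \leq_{\mathrm{lex}} Z\})$, suitably re-encoded as a map on Cantor space, furnishes such an $h$ together with an inverse, and absoluteness places both in $L$. Randomness conservation then yields that $r$ is Lebesgue-random over $L$, and the bijection $h \in L$ with $h^{-1} \in L$ gives $x \equiv_L r$.

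The main obstacle is in the forward direction, specifically verifying the atomlessness of $\lambda$. This requires the two-sided nature of $x \equiv_L r$: without an $L$-Borel left-inverse $\Psi$, the pushforward $\Phi_{\ast}\lambda_{\mathrm{Leb}}$ could easily concentrate on points (for instance, if $\Phi$ collapses a positive-measure cylinder). Once atomlessness is secured from the $L$-Borel inverse, both the randomness preservation step in the forward direction and the measure-isomorphism step in the reverse direction are straightforward transfers of standard classical results to $L$ as the base model, which is why the proof is as constructive as the one given for Proposition in the $\Pi^1_1$ case.
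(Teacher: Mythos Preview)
The paper does not actually give a proof of this proposition; it is stated immediately after the $\Pi^1_1$ analogue, which in turn is justified only by a pointer to ``(the proof of) higher Demuth Theorem and Reimann--Slaman Theorem''. So the intended argument is the $L$-level transfer of those two ingredients, and your direct pushforward/CDF construction is very much in that spirit --- indeed it is essentially what the Reimann--Slaman machinery produces once unwound. Your $(\Leftarrow)$ direction via the measure-algebra isomorphism is correct.

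The gap is in your $(\Rightarrow)$ direction, specifically the ``starting observation'' that $x\equiv_L r$ is witnessed by Borel $\Phi,\Psi$ with codes in $L$. This is \emph{not} a general fact about $\equiv_L$. What \emph{is} standard is only half of it: since $r$ is random over $L$, every real in $L[r]$ --- in particular $x$ --- has the form $\Phi(r)$ for some Borel $\Phi$ coded in $L$ (random-forcing names for reals are Borel functions). The existence of a Borel $\Psi\in L$ with $\Psi(x)=r$, however, is precisely the nontrivial point, and without it your atomlessness argument for $\lambda=\Phi_*\lambda_{\mathrm{Leb}}$ collapses. Nothing prevents $\Phi$ from collapsing a positive-measure set in $L$ to a single point $y_0\ne x$.

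There are two clean repairs. First, one can actually produce $\Psi$: from $L[x]=L[r]$ and the forcing theorem there is a condition $p\ni r$ forcing $L[\dot x]=L[\dot r]$; below $p$ the complete subalgebra generated by $\dot x$ is all of the measure algebra, so the $\sigma$-algebra generated by $\Phi$ is full modulo null on $p$; hence $\Phi$ is essentially injective on $p$ and the Borel inverse $\Psi\in L$ exists, with $r$ in the relevant conull set by randomness. Second, and more simply, one can bypass $\Psi$ entirely: the set $A$ of atoms of $\lambda=\Phi_*\lambda_{\mathrm{Leb}}$ is a countable set in $L$, hence $A\subseteq L$; since $x\notin L$ (otherwise $r\in L[x]=L$, impossible), $x\notin A$, so $r$ lies in the positive-measure $L$-Borel set $B=2^\omega\setminus\Phi^{-1}(A)$, and the pushforward of normalized Lebesgue on $B$ under $\Phi$ is a continuous measure in $L$ for which $x$ is still random over $L$. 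Either fix completes your argument.
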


Then the further question is: Suppose that $ZF+AD+DC$, which reals can be $L[\hat{\lambda}]$-random relative to a continuous measure $\lambda$? Where $\hat{\lambda}$ is a presentation of $\lambda$.

Note that the set $$A=\{x\mid x \mbox{ cannot be }L[\lambda]\mbox{-random for any continuous measure }\lambda\}$$ is $\Pi^1_3$.  So $A$ is a $\Pi^1_3$ countable set under the assumption.

\begin{proposition}
$0^{\sharp}\in A$.
\end{proposition}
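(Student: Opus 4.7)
The plan is to argue by contradiction. I would assume that $0^{\sharp}$ is $L[\lambda]$-random with respect to some continuous measure $\lambda$, and then derive a contradiction by combining a relativization of Proposition~\ref{proposition: cy theorem l random relative to hyp measure} with the classical preservation fact that set forcing over an inner model cannot create $0^{\sharp}$.

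First, since $\lambda \in L[\lambda]$ trivially, I would apply the constructive proof of Proposition~\ref{proposition: cy theorem l random relative to hyp measure} with $L$ replaced throughout by $L[\lambda]$, to produce a real $r$ that is Lebesgue-random over $L[\lambda]$ and satisfies $0^{\sharp} \equiv_{L[\lambda]} r$. In particular $0^{\sharp} \in L[\lambda, r]$, and $L[\lambda, r]$ is a generic extension of $L[\lambda]$ by the standard random forcing.

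Next, since $\{0^{\sharp}\}$ is a closed null set whose Borel code is $0^{\sharp}$ itself, the hypothesis that $0^{\sharp}$ is $L[\lambda]$-random forces $0^{\sharp} \notin L[\lambda]$. The defining $\Pi^1_2$ property of $0^{\sharp}$, encoding the theory of $L$ together with any increasing sequence of Silver indiscernibles, has a unique witness in any transitive class model in which it has one; so $L[\lambda]$ contains some real satisfying that property if and only if $0^{\sharp} \in L[\lambda]$. Hence $L[\lambda] \models $ ``$0^{\sharp}$ does not exist''. I would then invoke the classical inner-model-theoretic fact that ccc set forcing over a model in which $0^{\sharp}$ does not exist cannot introduce $0^{\sharp}$ (via, e.g., preservation of Jensen's covering property under ccc extensions). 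Applied to random forcing over $L[\lambda]$, this yields $L[\lambda, r] \models $ ``$0^{\sharp}$ does not exist'', so $0^{\sharp} \notin L[\lambda, r]$, contradicting the first step.

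The main obstacle is justifying the first step: one must verify that the constructive proof of Proposition~\ref{proposition: cy theorem l random relative to hyp measure}, which rests on the higher Demuth theorem together with the Reimann--Slaman theorem, is uniform in an oracle parameter, so that $L$ may be systematically replaced by $L[\lambda]$ throughout. The preservation fact used in the second step is a standard consequence of the Jensen covering theorem and I would cite it as a black box.
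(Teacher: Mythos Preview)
Your argument is correct and follows a genuinely different route from the paper's. The paper works under $PD$ and invokes a result of Simpson: the function $j(x)=(\kappa^+)^{L[x]}$ (with $\kappa=\aleph_1$) satisfies $x\le_L y \Rightarrow \bigl(j(x)<j(y) \leftrightarrow x^{\sharp}\le_L y\bigr)$. Applying this with $x=\lambda$ and $y=\lambda\oplus 0^{\sharp}$ gives $\lambda^{\sharp}\le_L \lambda\oplus 0^{\sharp}$ whenever $0^{\sharp}\not\le_L\lambda$, and this is incompatible with $0^{\sharp}$ being random over $L[\lambda]$. Your approach bypasses Simpson's function entirely and relies instead on the classical fact that set forcing cannot create $0^{\sharp}$; this is more elementary and needs only the existence of $0^{\sharp}$ rather than $PD$.

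One simplification: the detour through the relativized Proposition~\ref{proposition: cy theorem l random relative to hyp measure} is not needed. If $0^{\sharp}$ is $L[\lambda]$-random for the continuous measure $\lambda$, then $0^{\sharp}$ is already generic over $L[\lambda]$ for the measure-algebra forcing associated with $\lambda$, which is ccc. Hence $L[\lambda][0^{\sharp}]$ is itself a ccc forcing extension of $L[\lambda]$, and your preservation step applies directly to it, without first converting to a Lebesgue-random $r$. This sidesteps the obstacle you identified about relativizing that proposition.
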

\begin{proof}
Assume that $PD$. By Simpson, there is a function $j:2^{\omega}\to Ord$ (for example, $j(x)=(\kappa^+)^{L[x]}$) where $\kappa=\aleph_1$ so that
\begin{enumerate}
\item $x\leq_L y\rightarrow j(x)\leq j(y)$; and
\item $x\leq_L y\rightarrow (j(x)<j(y)\leftrightarrow x^{\sharp}\leq_L y)$.
\end{enumerate}

So for any continuous measure $\lambda\not\geq_L 0^{\sharp}$, $\lambda\oplus 0^{\sharp}\geq_L \lambda^{\sharp}$. So $0^{\sharp}$ cannot be random over $\lambda$.

\end{proof}

So $A$ is neither $\Pi^1_2$ nor $\Sigma^1_2$.

By Proposition \ref{proposition: cy theorem l random relative to hyp measure}, $A$ is closed under $L$-equivalence relation. However, $A$ is not closed under $\Delta_3^1$-equivalence relation. For example, there is a $\Delta^1_3$ $L$-random real Turing below $0^{\sharp}$.

\begin{question}
Assuming $ZF+AD+DC$, give a characterization of $A$.
\end{question}

\section{Kjos-Hanssen and Yu: Mixed lowness for higher randomness}
          All the results below were proved by Kjos-Hanssen and Yu in 2011 when Yu was visiting Kjos-Hanssen in San Diego.
          
	The reals that are low for the implications from Martin-L\"of randomness to Kurtz randomness and Schnorr randomness have in the past been characterized as the non-DNR degrees and the c.e.\ traceable degrees. Here we obtain the analogous results in the setting of hyperarithmetic reducibility: lowness for the implications from $\Pi^1_1$-ML-randomness to $\Delta^1_1$-randomness and $\Delta^1_1$-Kurtz randomness equal $\Pi^1_1$-traceable hyperdegrees and non-$\Pi^1_1$-DNR hyperdegrees, respectively.

	\begin{definition}
	$X$ is $\Delta^1_1$-SNR if there exists a function $f\le_h X$ such that for each total $\Delta^1_1$ function $\Phi$, there do not exist infinitely many $e$ with $f(e)=\Phi(e)$.
	\end{definition}

	\begin{definition}
	$X$ is $\Pi^1_1$-DNR if there exists a function $f\le_h X$ such that there do not exist infinitely many $e$ with $f(e)=\Phi_e(e)$.
	Here $\Phi_e(i)=R_e(i)$ is a universal $\Pi^1_1$ function. Let $R_e$, $e\in\omega$ be an effective list of all the partial recursive predicates. Then $R_e(i)=m\iff \forall X\exists n$ $R_e(X,n,m,i)$.
	\end{definition}

	\begin{theorem}
	$\Delta^1_1$-SNR implies $\Pi^1_1$-DNR.
	\end{theorem}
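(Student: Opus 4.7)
The plan is to split on whether $\omega_1^X$ strictly exceeds $\wck$. Let $f\leq_h X$ witness that $X$ is $\Delta^1_1$-SNR. If $\omega_1^X>\wck$, then $\KO\leq_h X$, so every $\Pi^1_1$ predicate is $\Delta^1_1(X)$; in particular the graph of the partial function $e\mapsto\Phi_e(e)$ is $\leq_h X$, and I would set $g(e)=\Phi_e(e)+1$ when $\Phi_e(e)\downarrow$ and $g(e)=0$ otherwise. This $g$ is $\leq_h X$ and differs from $\Phi_e(e)$ wherever the diagonal converges, witnessing $\Pi^1_1$-DNR; the SNR hypothesis plays no role here.

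In the main case $\omega_1^X=\wck$, I will show that $f$ itself is $\Pi^1_1$-DNR. For each notation $o\in\KO$ the stage-$|o|$ approximation $\Psi_o(e):=\Phi_e^{|o|}(e)$ (defaulting to $0$ until the well-founded tree computation has converged) is a total $\Delta^1_1$ function of $e$, uniformly in $o$. SNR of $f$ against each $\Psi_o$ forces $A_o:=\{e:\Phi_e^{|o|}(e)\text{ has converged with value }f(e)\}$ to be finite for every $o$, and the full agreement set $A:=\{e:f(e)=\Phi_e(e)\}$ is the increasing union $\bigcup_{o\in\KO}A_o$.

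If $A$ were infinite, then for each $n\in\NN$ the least computable ordinal $\alpha_n<\wck$ with $|A_{\alpha_n}|\geq n$ would exist, and the $\alpha_n$ would be cofinal in $\wck$. I would verify that $n\mapsto\alpha_n$ is $\Delta^1_1(X)$-computable (via a canonical notation sequence), exhibiting a $\Delta^1_1(X)$ function $\NN\to\wck=\omega_1^X$ with unbounded range. This contradicts $\Sigma_1$-admissibility of $\omega_1^X$, i.e., the relativized Spector--Gandy bounding principle. Hence $A$ is finite and $f$ itself witnesses $\Pi^1_1$-DNR. The main obstacle is the complexity bookkeeping for the function $n\mapsto\alpha_n$: its graph is most naturally written as $\Pi^1_1(X)$ (``$|A_\alpha|\geq n$ and no smaller notation witnesses this''), but functional uniqueness together with closure of $\Pi^1_1(X)$ under number existentials should yield $\Delta^1_1(X)$ for a suitable notation-valued presentation, after which admissibility closes the argument immediately.
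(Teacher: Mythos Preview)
Your proposal is correct and follows essentially the same approach as the paper. Both arguments split on whether $X\ge_h\KO$ (equivalently $\omega_1^X>\wck$), dispose of that case directly, and in the main case $\omega_1^X=\wck$ use a bounding principle to trap all agreements $f(e)=\Phi_e(e)$ below a single computable stage $\alpha$, so that $f$ agrees infinitely often with the total $\Delta^1_1$ function $e\mapsto\Phi_e(e)[\alpha]$, contradicting SNR. The paper runs this contrapositively and simply cites ``$\Sigma^1_1$-bounding''; you run it directly and unpack the bounding as admissibility of $\omega_1^X$.

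One remark: the ``main obstacle'' you flag is not really an obstacle. You do not need the cofinal map $n\mapsto\alpha_n$ to be $\Delta^1_1(X)$; a $\Pi^1_1(X)$ (equivalently $\Sigma_1$ over $L_{\omega_1^X}[X]$) definition already suffices for $\Sigma_1$-collection to give a bound. Concretely, the relation ``$|A_\alpha|\ge n$'' is $\Delta_1$ over $L_{\omega_1^X}[X]$ in the parameter $f$, so admissibility immediately yields a single $\beta<\wck$ with $|A_\beta|\ge n$ for all $n$, making $A_\beta$ infinite---the desired contradiction. The extra step through functional uniqueness is unnecessary.
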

	\begin{proof}
	Suppose $X$ is not $\Pi^1_1$-DNR. So there are infinitely many $e$ with $f(e)=\Phi_e(e)$.

	Case 1: $X\ge_h O$. Impossible.

	Case 2: $X\not\ge_h O$. Then $\omega_1^X=\omega_1^{\text{CK}}$. Then find $e_1,e_2,\ldots$ with $f(e_n)=\Phi_{e_n}(e_n)\downarrow$ by stage $\alpha$. Bound $\alpha<\omega_1^{\text{CK}}$ ($\Sigma^1_1$-bounding). Then $\exists^\infty e$ $f(e)=\Phi(e):=\Phi_e(e)[\alpha]$. So $X$ is not $\Delta^1_1$-SNR.
	\end{proof}

	\begin{figure}
	\begin{tabular}{|c|c|c|c|}
	\hline
	 Low($y$,$x$)			& $\Delta^1_1$-Kurtz 	&  $\Delta^1_1$-rand 		& $\Pi^1_1$-MLR				\\
	\hline
	$\Delta^1_1$-Kurtz 				&  $\Pi^{1}_{1}$-dominated, not $\Pi^{1}_{1}$-DNR \cite{Kjos.Nies.Stephan.Yu:10}	&  			& 				\\
	\hline
	$\Delta^1_1$-rand 				& not $\Pi^{1}_{1}$-DNR 	& $\Pi^{1}_{1}$-traceable 			& 	\\ \hline
	$\Pi^1_1$-MLR		&  \emph{not $\Pi^{1}_{1}$-DNR} 	& \emph{$\Pi^{1}_{1}$-traceable} 			& $\Delta^1_1$				\\
	\hline
	\end{tabular}
	\caption{A complete picture of lowness for higher randomness between $\Pi^{1}_{1}$-ML-randomness and $\Delta^{1}_{1}$-Kurtz randomness. \emph{Results from the present paper.}}
	\end{figure}

	\begin{theorem}
	Each $\Pi^1_1$-DNR computes an infinite subset of a $\Pi^1_1$-MLR set.
	\end{theorem}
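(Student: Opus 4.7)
The natural approach is to lift the classical theorem of Kjos-Hanssen characterizing the DNR degrees as exactly those computing an infinite subset of some Martin-L\"of random set, replacing $\Sigma^0_1$/$\Pi^0_1$ effectivity by $\Sigma^1_1$/$\Pi^1_1$ effectivity throughout, and using $\Sigma^1_1$-bounding exactly as in the proof of the previous theorem that $\Delta^1_1$-SNR implies $\Pi^1_1$-DNR.

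First I would fix a universal $\Pi^1_1$-Martin-L\"of test $\{U_n\}_{n\in\omega}$ with $\lambda(U_n)\le 2^{-n}$ and set $V_n=2^\omega\setminus U_n$. Passing to a $\Pi^1_1$ tree representation, I get a uniformly $\Pi^1_1$ sequence of pruned trees $T_n\subseteq 2^{<\omega}$ with $[T_{n+1}]\subseteq[T_n]\subseteq V_n$ and $\lambda([T_n])\ge 1-2^{-n+1}$, so that every infinite path in $\bigcap_n[T_n]$ is $\Pi^1_1$-MLR. A straightforward counting argument then provides, uniformly in $n$ and in any $\sigma\in T_n$, a length $m=m(n,\sigma)$ so that at least two thirds of the $\sigma$-extensions of length $m$ lie in $T_n$, and moreover most of them carry a $1$ in some position strictly to the right of $|\sigma|$.

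Given a $\Pi^1_1$-DNR function $f\le_h X$, I would then thread through the $T_n$ in stages. At stage $k$ I have a committed string $\sigma_k\in T_{n_k}$ and a set $A_k=\{a_0<\dots<a_{k-1}\}$ of positions already designated as lying in the path. I list a finite candidate set of pairs $(i,\tau)$ where $\tau\supseteq\sigma_k$ is an extension in $T_{n_k}$ of length $m_k$ with $\tau(i)=1$ for some fresh position $i$. Using the $s$-$m$-$n$ theorem and a canonical $\Pi^1_1$ universal predicate, I construct an index $e_k$ so that $R_{e_k}(e_k)$ codes one particular candidate; then $f(e_k)$, being DNR, must select a different candidate $(a_k,\sigma_{k+1})$ from the finite list. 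Appending $a_k$ to $A$ and committing to $\sigma_{k+1}$ completes the stage.

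The verification is then a direct compactness argument: $R=\lim_k\sigma_k$ lies in $\bigcap_n[T_n]$ so is $\Pi^1_1$-MLR, and $A=\{a_0,a_1,\dots\}\subseteq R$ by construction, with $A\le_h f\le_h X$. The main obstacle will be the third step, specifically the packaging of the ``bad choice'' at each stage as a single value $R_{e_k}(e_k)$ of the universal $\Pi^1_1$ predicate, so that a single DNR diagonalization actually forces avoidance. The classical argument uses a computable stage-by-stage approximation to the obstruction; in the higher setting, the values of $R_{e_k}(e_k)$ converge only at some countable ordinal, and we need $\Sigma^1_1$-bounding (precisely as in the proof on the preceding page that a counterexample to $\Pi^1_1$-DNR yields a counterexample to $\Delta^1_1$-SNR) to guarantee that all the relevant obstructions are revealed by a common $\alpha<\omega_1^{\mathrm{CK}}$, so that the diagonalization performed by $f$ indeed rules them out. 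Once this matching is in place, the remainder of the construction is formally identical to the classical Kjos-Hanssen argument.
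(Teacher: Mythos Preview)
Your approach is exactly the paper's: the proof given there is the single sentence ``Follows the general plan of Barmpalias, Miller, Nies (Miller's result),'' i.e.\ lift the classical argument verbatim to the $\Pi^1_1$ setting.

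Two small corrections to your sketch. First, the trees $T_n$ representing the complements of the test components are $\Sigma^1_1$, not $\Pi^1_1$; what is $\Pi^1_1$ (and what the DNR function must avoid) is the set of \emph{bad} extensions $\tau$ with $[\tau]\subseteq U_n$, and that is what gets packaged into the diagonal values. Second, the $\Sigma^1_1$-bounding step you flag as the main obstacle is not actually needed: the construction applies $f$ to indices produced computably via the recursion theorem and never waits for the $\Pi^1_1$ side to converge, so the passage $f\mapsto A$ is outright computable, exactly as in the classical Miller argument. The DNR hypothesis alone guarantees $f(e_k)$ avoids the bad value regardless of at which ordinal that value settles.
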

	\begin{proof}
	Follows the general plan of Barmpalias, Miller, Nies \cite{Brattka.Miller.ea:nd} (Miller's result). 
	\end{proof}

	\begin{theorem}
	Low($\Pi^1_1$-MLR, $\Delta^1_1$-random) equals $\Pi^1_1$-traceable.
	\end{theorem}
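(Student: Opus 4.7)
The plan is to adapt the classical Kjos-Hanssen--Nies--Stephan--Yu characterization of $\mathrm{Low}(\MLR,\SR)$ as c.e.\ traceable to the hyperarithmetic setting, proving each inclusion separately. Throughout, ``$\Pi^1_1$-traceable'' will mean: for every $f\le_h A$ and every order function $h$, there is a $\Pi^1_1$ trace $(T_n)$ with $|T_n|\le h(n)$ and $f(n)\in T_n$ for all $n$.

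For the inclusion $\Pi^1_1\textrm{-traceable}\subseteq\mathrm{Low}(\Pi^1_1\textrm{-MLR},\Delta^1_1\textrm{-random})$, suppose $A$ is $\Pi^1_1$-traceable and $X$ fails to be $\Delta^1_1(A)$-random; we show $X$ is not $\Pi^1_1$-ML-random. Fix a $\Delta^1_1(A)$-ML test $(U_n^A)$ with $X\in\bigcap_n U_n^A$ and $\lambda(U_n^A)\le 2^{-n}$. For each $n$, the set $U_n^A$ is determined by an $A$-hyperarithmetic index $f(n)$ coding a uniform enumeration of the clopen approximations to $U_n^A$. By $\Pi^1_1$-traceability applied to $f$ with a suitable slow-growing order $h$, there is a $\Pi^1_1$ trace $(T_n)$ with $f(n)\in T_n$ and $|T_n|\le h(n)$. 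Setting
\[
V_k \;=\; \bigcup_{n\ge k}\;\bigcup_{c\in T_{n+k}} U^{(c)}_{n+k},
\]
where $U^{(c)}_{m}$ is the open set coded by the candidate index $c$ (trimmed to have measure $\le 2^{-m}$), one obtains a $\Pi^1_1$-ML test: the measures sum to a geometric series by the choice of $h$, and the sequence is uniformly $\Pi^1_1$. Since the correct index lies in $T_{n+k}$, $X\in\bigcap_k V_k$, contradicting $\Pi^1_1$-ML-randomness of $X$.

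For the reverse inclusion, assume $A$ is not $\Pi^1_1$-traceable and produce a $\Pi^1_1$-ML-random $X$ that is not $\Delta^1_1(A)$-random. Pick $f\le_h A$ that escapes every $\Pi^1_1$ trace of order $h$, for some fixed $h$. Using a higher Ku\v cera--G\'acs style coding (built from the existence of a universal $\Pi^1_1$-ML test, which exists in the higher setting), code $f$ bitwise into a $\Pi^1_1$-ML-random real $X$: partition $\omega$ into consecutive blocks $I_n$ of exponentially growing length, and on $I_n$ select one of $h(n)$ candidate blocks using the value $f(n)$. The resulting $X$ is $\Pi^1_1$-ML-random because each block is still chosen from a set of positive measure inside the complement of the universal $\Pi^1_1$-ML test. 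On the other hand, given $A$ one can compute $f$, hence the correct block on each $I_n$; the $\Delta^1_1(A)$-ML test $(U_n^A)$ that opens only on the $A$-predicted blocks has measure tending to $0$ and captures $X$, so $X$ is not $\Delta^1_1(A)$-random.

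The principal obstacle is the reverse direction. The classical Ku\v cera--G\'acs argument depends on a universal ML-test of uniformly bounded measure and on simple enumerations of clopen sets; lifting it into the higher world requires (a) a universal $\Pi^1_1$-ML test with $\wck$-length approximations that behaves well under $\Sigma^1_1$-bounding, and (b) a careful choice of block structure so that the coded function $f$ genuinely escapes $\Pi^1_1$ traces while the coding itself does not inadvertently inject $\Pi^1_1$ information that destroys randomness. These subtleties are precisely the places where ordinal approximations and admissibility have to be controlled, and they will constitute the bulk of the formal verification.
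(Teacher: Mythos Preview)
Your forward direction is essentially the standard trace argument and is fine in outline. The reverse direction, however, has a genuine gap, and it is not merely a matter of controlling ordinal approximations.

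The problem is this: in your Ku\v cera--G\'acs construction, the ``candidate blocks'' on $I_n$ are those extensions that remain inside the complement of the universal $\Pi^1_1$-ML test. To define the test component $U_n^A$ as ``the $f(n)$-th candidate block on $I_n$'', one must know \emph{which} blocks are candidates, and that information is genuinely $\Pi^1_1$ (it encodes membership in a $\Sigma^1_1$ closed class), not $\Delta^1_1(A)$. So your $(U_n^A)$ is not a $\Delta^1_1(A)$ test unless $A$ already computes Kleene's $\mathcal O$. If instead you fix the candidates in advance (say, the lex-first $h(n)$ strings of length $|I_n|$) so that the test \emph{is} $\Delta^1_1(A)$, then there is no reason the resulting $X$ is $\Pi^1_1$-ML-random: it is simply a real hyperarithmetic in $A$, and whether it is random has nothing to do with $f$. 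A telling symptom is that your sketch never actually invokes the hypothesis that $f$ escapes all $\Pi^1_1$ traces; if the argument worked, it would work for every $f\le_h A$, which is impossible.

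The method the paper refers to (Kjos-Hanssen, Nies, Stephan 2005) is not a Ku\v cera--G\'acs coding. One encodes $f$ \emph{directly} as a $\Delta^1_1(A)$-test: choose pairwise independent coding regions and let $C_n$ be a clopen set of measure $2^{-n}$ (say) whose position is determined by $f(n)$ via a fixed $\Delta^1_1$ coding scheme, so that $(C_n)$ is automatically $\Delta^1_1(A)$. One then argues by contraposition: if no $\Pi^1_1$-ML-random lies in $\bigcap_m\bigcup_{n\ge m} C_n$, then for each $n$ the possible positions of $C_n$ that meet the complement of a fixed level of the universal $\Pi^1_1$-ML test give a uniformly $\Pi^1_1$ set of bounded size containing $f(n)$---a $\Pi^1_1$ trace, contradicting the choice of $f$. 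Here non-traceability is exactly what forces some $\Pi^1_1$-ML-random into the $\Delta^1_1(A)$-null set. This is the argument you should lift to the higher setting.
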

	\begin{proof}
	This follows the method of Kjos-Hanssen, Nies, and Stephan 2005 \cite{Kjos.ea:05}.
	\end{proof}

	We also show $2-\Pi^1_1$-DNR implies $\Delta^1_1$-high implies $\ge_h$ $\Pi^1_1$-ML-random implies $\ge_h$ $\Pi^1_1$-DNR, and that the middle implication does not reverse.

	We first prove that $2-\Pi^1_1$-DNR implies $\Delta^1_1$-high.
	\begin{lemma}\label{lemma: pa computes pi01}
	If $x$ is $2-\Pi^1_1$-DNR and $A\subseteq 2^{\omega}$ is a nonempty $\Sigma^1_1$-closed set, then there must be some $z\leq_h x$ in $A$.
	\end{lemma}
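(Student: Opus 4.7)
The plan is to adapt the classical Jockusch-Soare argument that every $\{0,1\}$-valued DNR function computes a path through every nonempty $\Pi^0_1$ class, replacing ``computable'' by ``$\Pi^1_1$'' and ``Turing'' by ``hyperarithmetic'' throughout. Let $f \leq_h x$ be a $\{0,1\}$-valued $\Pi^1_1$-DNR function witnessing the hypothesis, so that $f(e) \neq R_e(e)$ whenever $R_e(e)\downarrow$. The construction will build, uniformly in $f$, a nested sequence $\langle\rangle = \sigma_0 \subsetneq \sigma_1 \subsetneq \cdots$ with $|\sigma_n| = n$ and $[\sigma_n] \cap A \neq \emptyset$ throughout; by closedness, $z := \bigcup_n \sigma_n \in A$, and $z \leq_h f \leq_h x$ as required.

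The key observation is that since $A$ is $\Sigma^1_1$-closed its complement is $\Pi^1_1$-open, so by compactness the predicate ``$[\sigma] \cap A = \emptyset$'' is itself $\Pi^1_1$ uniformly in $\sigma$ and so comes equipped with a canonical $\wck$-stage of first confirmation. For each $\sigma$ I would define a partial $\Pi^1_1$ function $\psi_\sigma : \omega \to \{0,1\}$ that, ignoring its input, outputs $0$ if ``$[\sigma*0] \cap A = \emptyset$'' is confirmed strictly earlier than ``$[\sigma*1] \cap A = \emptyset$'', outputs $1$ in the symmetric case, and diverges if neither is ever confirmed; its graph is visibly $\Pi^1_1$ uniformly in $\sigma$. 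Applying the higher $s$-$m$-$n$ theorem followed by the higher fixed-point theorem produces, uniformly in $\sigma$, an index $e_\sigma$ with $R_{e_\sigma}(e_\sigma) = \psi_\sigma(e_\sigma)$, and I then set $\sigma_{n+1} := \sigma_n * f(e_{\sigma_n})$.

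To see the invariant is preserved, suppose $[\sigma_n] \cap A \neq \emptyset$. If $R_{e_{\sigma_n}}(e_{\sigma_n}) = 0$ then $[\sigma_n*0] \cap A = \emptyset$, so $[\sigma_n*1] \cap A \neq \emptyset$; and the DNR property forces $f(e_{\sigma_n}) = 1$, matching the nonempty side. The case $R_{e_{\sigma_n}}(e_{\sigma_n}) = 1$ is symmetric, and if $R_{e_{\sigma_n}}(e_{\sigma_n}) \uparrow$ then both successors remain nonempty and either value of $f(e_{\sigma_n})$ works. The main obstacle is the higher recursion-theoretic bookkeeping: one must verify that ``the $\Pi^1_1$ event $P$ is confirmed at stage $\alpha < \wck$'' really does give a $\Pi^1_1$ predicate in the relevant parameters (so that $\psi_\sigma$ has a $\Pi^1_1$ graph uniformly in $\sigma$), and that the fixed-point theorem applies with enough uniformity to yield a single recursive functional mapping $f$ to $z$. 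Once that framework is in hand, the classical calculation reproduces verbatim.
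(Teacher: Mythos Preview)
Your proposal is correct and takes essentially the same approach as the paper, which merely remarks that one ``may simulate the proof that every 2-DNR degree computes a member of every nonempty $\Pi^0_1$-set.'' You have in fact supplied considerably more detail than the paper does, and the bookkeeping concerns you flag (that ``$[\sigma]\cap A=\emptyset$'' is uniformly $\Pi^1_1$, that stage comparison yields a $\Pi^1_1$-graph $\psi_\sigma$, and that the higher recursion theorem gives $\sigma\mapsto e_\sigma$ recursively so that $z\le_T f\le_h x$) are exactly the points one must check in carrying out that simulation.
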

	We may simulate the proof that every 2-DNR degree computes a member of every nonempty $\Pi^0_1$-set to show Lemma \ref{lemma: pa computes pi01}.
	
	\begin{lemma}\label{lemma: a pi01 only contains high}
	There is a nonempty $\Sigma^1_1$-closed set $A$ in which every member Turing computes all hyperarithmetic reals.
	\end{lemma}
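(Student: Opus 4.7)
The plan is to realize $A$ as a $\Sigma^1_1$-closed set of reals coding restricted pseudo-hyperjump hierarchies along a computable Harrison linear ordering. By enforcing the jump relation only on the well-founded initial segment, transfinite induction will force every member of $A$ to code the genuine hyperjump hierarchy up to $\omega_1^{\mathrm{CK}}$, and hence to Turing-compute every hyperarithmetic real.

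First I would fix a computable Harrison linear ordering $<_H$ on $\omega$ of order type $\omega_1^{\mathrm{CK}}(1+\eta)$, so that its well-founded initial segment $W$ has order type exactly $\omega_1^{\mathrm{CK}}$. Via a fixed effective pairing, regard each $x \in 2^\omega$ as encoding a sequence $(y_n, h_n)_{n \in \omega}$ of reals together with convergence-stage data $h_n$. Define a tree $T \subseteq 2^{<\omega}$ by declaring $\sigma \in T$ iff for every $m$ in the (finite) support of $\sigma$, either $m \notin W$ or the bits encoded in $\sigma$ exhibit no finitary inconsistency with the requirement that $y_m$ be the Turing jump of $\bigoplus_{k <_H m,\, k \in W} y_k$ (as witnessed by $h_m$). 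Since ``$m \notin W$'' is $\Sigma^1_1$ and the finitary-inconsistency check is decidable, each disjunct is $\Sigma^1_1$; a finite conjunction over $m \in \mathrm{supp}(\sigma)$ keeps $T$ $\Sigma^1_1$, and $T$ is easily seen to be downward-closed. Hence $A := [T]$ is a $\Sigma^1_1$-closed subset of $2^\omega$. It is nonempty: the canonical $x^* \in 2^\omega$ coding $y_n = H_{\mathrm{rank}(n)}$ on $W$ (with true convergence stages) and arbitrary values off $W$ has every prefix in $T$.

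For any $x \in A$, the absence of finitary inconsistencies in every prefix of $x$ is equivalent to the statement that $y_n$ at every $n \in W$ really equals the Turing jump of $\bigoplus_{k <_H n,\, k \in W} y_k$. Transfinite induction along the well-founded order $W$ then forces $y_n = H_{\mathrm{rank}(n)}$ for each $n \in W$, using the uniqueness of the Turing jump (the jump of $\bigoplus_{\beta<\alpha} H_\beta$ is $H_\alpha$). Because $W$ has order type $\omega_1^{\mathrm{CK}}$, every $\alpha < \omega_1^{\mathrm{CK}}$ is realized as $\mathrm{rank}(n_\alpha)$ for some $n_\alpha \in W$, so $x \geq_T y_{n_\alpha} = H_\alpha$. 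Since every hyperarithmetic real is Turing-below some $H_\alpha$, $x$ Turing-computes every hyperarithmetic real.

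The main technical point will be that $T$ must be genuinely $\Sigma^1_1$ rather than arithmetic: the $\Pi^1_1$-filter ``$m \in W$'' has to be placed negatively (as ``$m \notin W$'') inside an implication, and leaving the non-well-founded part of $<_H$ entirely unconstrained is what permits nonemptyness despite the rigid determination of $y_n$ on $W$. Any purely arithmetic variant of the construction would be blocked by the Jockusch--Soare low basis theorem, since no Turing-low real can compute every hyperarithmetic real.
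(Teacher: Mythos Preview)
Your plan hinges on the claim that absence of ``finitary inconsistency'' in every prefix of $x$ forces $y_n$ to equal the Turing jump of $\bigoplus_{k<_H n} y_k$ for each $n\in W$, but you never say what the finitary check actually is, and this is exactly where the difficulty lies. The relation $y_n = Z_n'$ is genuinely $\Pi^0_2$: its divergence half, $y_n(e)=0 \Rightarrow \Phi_e^{Z_n}(e)\!\uparrow$, cannot be refuted by any finite amount of data, regardless of what stage information $h_n$ records. Under the natural reading of ``no finitary inconsistency with $y_n$ being the jump'' you get at best one inclusion (either $Z_n'\subseteq y_n$ or, via $h_n$, $y_n\subseteq Z_n'$), and paths witnessing the other failure survive in $[T]$---for instance $y_n$ identically $1$, or $y_n$ identically $0$ with $h_n$ identically $0$. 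Your transfinite induction then has nothing to induct on. One can try to combine a growing-stage check in one direction with an $h_n$-witnessed check in the other, but making this precise while keeping the tree condition decidable and downward-closed is real work that your plan does not supply; the bald assertion of equivalence is the missing idea.

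The paper's argument is quite different and sidesteps the issue entirely. Rather than asking the path to internally encode a correct jump hierarchy, it fixes in advance uniform $tt$-reductions $\Phi_{f(n)}$ with $\Phi_{f(n)}^{\mathcal O}=H_n$ for each $n\in\mathcal O$, and sets $T=\{\sigma:\forall n\,(n\in\mathcal O\to\Phi^{\sigma}_{f(n)}\prec H_n)\}$. Here the comparison is against the actual $H_n$'s, so the tree condition is $\Sigma^1_1$ directly (its negation, ``$\exists n\in\mathcal O$ with $\Phi^{\sigma}_{f(n)}\not\prec H_n$'', is $\Pi^1_1$ because both $\{(n,m):n\in\mathcal O\wedge m\in H_n\}$ and $\{(n,m):n\in\mathcal O\wedge m\notin H_n\}$ are $\Pi^1_1$); nonemptiness is immediate since $\mathcal O\in[T]$; and totality of the $tt$-reductions forces $\Phi^{x}_{f(n)}=H_n$ for every $x\in[T]$ and every $n\in\mathcal O$. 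No internal jump relation is ever imposed, so the $\Pi^0_2$ obstacle never arises.
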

	\begin{proof}
	Note that there is a recursive function $f:\omega\to \omega$ so that for any $n\in \mathcal{O}$, $\Phi_{f(n)}$ is a $tt$-reduction so that $\Phi^{\mathcal{O}}_{f(n)}=H_n$.
	
	Let $\sigma\in T$ if and only if $\sigma\in 2^{<\omega}$ and for any $n$, if $n\in \mathcal{O}$, then $\Phi^{\sigma}_{f(n)}\prec H_n$. So $T$ is a $\Sigma^1_1$-subtree of $2^{<\omega}$.
	
	Note that $[T]$ is not empty since $\mathcal{O}\in [T]$.
	
	If $x\in [T]$, then for any $n\in \mathcal{O}$, $\Phi_{f(n)}^x$ is total since $\Phi_{f(n)}$ is $tt$-reduction. Moreover,$\Phi_{f(n)}^x =H_n$.
	
	Let $A=[T]$.
	\end{proof}
	Now if $x$ is $2-\Pi^1_1$, then by Lemma \ref{lemma: pa computes pi01} and \ref{lemma: a pi01 only contains high}, there must be some real $z\leq_h x$ so that every hyperarithmetic real recursive in $z$. Then $z$ must be $\Delta^1_1$-high.
	
	To see that $\Delta^1_1$-high implies $\geq_h$$\Pi^1_1$-ML-random, just note that every schnorr random relative to a $\Delta^1_1$-high real is $\Delta^1_1$-random. Then it follows by the dichotomy of randomness. No $\Pi^1_1$--random can be $\Delta^1_1$-high. Otherwise, $r^{(\alpha)}$ Turing computes all hyperarithemtic reals for some $\Pi^1_1$-random real $r$ and recursive ordinal $\alpha$. However, by the lowness of $\Pi^1_1$-randomness,  $r^{(\alpha)}\oplus \emptyset^{(\beta)}<_T r^{(\alpha+1)}\leq_T r\oplus \emptyset^{(\beta)}$ for some recursive ordinal $\beta$, a contradiction. So the reverse direction fails.
	
	Clearly every $\Pi^1_1$-$ML$-random hyperarithmetically computes a $\Pi^1_1$-DNR function. \footnote{Kihara proved that the reverse direction doesn't hold?}

\newpage
\part{Metric spaces}
\section{Gavruskin and Nies: Left-c.e.\ metric spaces}

Alexander  Gavruskin and Andr\'e Nies worked during a Tokerau Beach retreat Nov 2013. They finished their research a few months later.  The paper has now appeared in the Lobchevskii Math. Journal.

Consider a Polish metric space $\+ M = (M,d, \seq {p_k} \sN k)$ where $d$ is the distance function, and $\seq {p_k}$ is a designated dense sequence.  The $p_i$ are called the special points. To say that $\+ M$ is   computable   means that $d(p_i,p_k)$ is a computable real uniformly in $i,k$.
 There has  been recent  work on computable metric spaces (for instance by Melnikov), and on $\PI 1$ equivalence relations  on $\NN$ by Ianovski et al. \cite{Ianovski.Miller.ea:14}. The following generalizes both concepts.

\begin{definition} We say that  $\+ M = (M,d, \seq {p_k} \sN k)$ is  a left-c.e.\ [right-c.e.] metric space  if $d(p_i,p_k)$ is a left-c.e.\  real  [right-c.e.\ real] uniformly in numbers $i,k \in \NN$. We write $d_s(p,q)$ for the distance of special points $p, q$ at stage $s$.
\end{definition}
We mainly study left-c.e.\ spaces. For them, intuitively speaking, the distance between points can increase over time. 
\begin{example}  {\rm Let  $\beta$ be  a left-c.e.\ real. Then $[0, \beta]$ is a  left-c.e.\ metric space; so is the circle of radius $\beta$ with the Euclidean metric inherited from $\RR^2$. In fact $[0, \beta]$  has a computable presentation even if $\beta$ is noncomputable. (Note that  this presentation is not  effectively compact.) In contrast,   the circle of radius $\beta$ does not have a computable presentation at all if $\beta$ is noncomputable.  To see this, given $n$, find points $q_0, \ldots, q_{n-1}$ among the $p_i$   such that   $\fa i,k \, |d(q_i,q_{i+1})- d(q_k, q_{k+1})| \le \tp{-n}$ (where addition is taken mod $n$). This means the points $q_i$ are the corners of an $n$-gon up to small error. Hence   $\sum_{i<n} d(q_i,q_{i+1})$ converges to $2\pi  \beta$ effectively. So $2\pi \beta$ is computable.  (In more detail,  each side of the $n$-gon has length within $\tp{-n}$ of $2\beta \cos (\pi/n)$, so the error for the $n$-th approximation is at most $n \tp{-n}  + 2 \beta   |\pi  - n \cos (\pi/n)|$.)

Let $E$ be a $\PPI$ equivalence relation on $\NN$. Define $d(x,y)= 1- 1_E(\la x, y \ra)$, namely $0$ if $Exy$ and $1$ otherwise. Clearly this metric makes $\NN$ a left-c.e.\ metric space (with $p_k = k$). This space   has a computable presentation, though not uniformly. Note that the $\PPI$ equivalence relations can be uniformly identified with  left-c.e.\ metric space where the set of possible distances is $\{0,1\}$.}
\end{example}

\begin{definition}  A \emph{Cauchy name}  in a   metric space with distinguished dense sequence  $\+ M = (M,d, \seq {p_k} \sN k)$ is a function $g$ on $\NN$ such that $d(p_{g(i)}, p_{g(k)}) \le \tp {-i} $ for each $i \le k$. \end{definition}

 The Cauchy names in a left-c.e.\ metric space  form a $\PPI$ subclass of Baire space. This together with the examples seems to suggest it  is more natural to look at left-c.e.\ spaces,  rather than at right-c.e.\ ones. 

\begin{definition}  We say that an  isometric embedding $g$  from a   left-c.e.\ [right-c.e.] metric spaces  $\+ M = (M,d, \seq {p_k} \sN k)$ to another $\+ N = (N,d, \seq {q_k} \sN k)$  is \emph{computable} if uniformly in $k$ one can compute a Cauchy name for  $g(p_k)$.  \end{definition} 

  It is not hard to see that there is a universal   object in the class of  right-c.e.\ -metric spaces with respect to computable isometric embeddings.  We now obtain the corresponding result for left-c.e.\ spaces with a fixed bound on the diameter.

\begin{thm}  \label{thm: Universal left-ce} Let $\gamma>0$ be a left-c.e.\ real. Within the class of  left-c.e.\ metric spaces\ of diameter at most  $\gamma$,  there is a left-c.e.\ metric space\ $\+ U$  which is universal with respect to  computable isometric embeddings. \end{thm}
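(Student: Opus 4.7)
The plan is to construct $\+U$ as a kind of disjoint union of all left-c.e.\ metric spaces of diameter $\le \gamma$, with points from different ``components'' placed at distance exactly $\gamma$ from one another. First I would fix an effective enumeration $\{\+M_e\}_{e\in\NN}$ of left-c.e.\ metric spaces of diameter $\le \gamma$, presented by uniformly computable rational stage approximations $d^{(e)}_s(k,k')$ that are nondecreasing in $s$ and whose stage-$s$ restriction to vertices $\{0,\dots,s\}$ is already a valid finite metric of diameter $\le \gamma_s$. Establishing that every left-c.e.\ metric space admits such a well-behaved enumeration is the main preliminary, since raw left-c.e.\ approximations need not satisfy the triangle inequality at any finite stage.

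With the enumeration in hand, take the special points of $\+U$ to be $p_{\langle e,k\rangle}$ indexed by a computable bijection $\NN^2 \to \NN$, and define
\[
d^{\+U}_s(p_{\langle e,k\rangle},p_{\langle e',k'\rangle}) := \begin{cases} d^{(e)}_s(k,k') & \text{if } e=e', \\ \gamma_s & \text{if } e\neq e'. \end{cases}
\]
Since $\gamma_s$ and each $d^{(e)}_s$ are nondecreasing in $s$, so is $d^{\+U}_s$, and its limit is a uniformly left-c.e.\ real; the diameter is dominated by $\gamma_s \le \gamma$. To check that $d^{\+U}$ is a metric in the limit, split into three cases according to how many distinct components a triple of points spans. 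Intra-component triples reduce to the triangle inequality in $\+M_e$; mixed triples (two in one component, one in another) reduce to an inequality of the form $d^{(e)}(k,k') \le 2\gamma$, covered by the diameter bound; and fully cross-component triples amount to $\gamma \le 2\gamma$.

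For universality, given a left-c.e.\ metric space $\+M$ of diameter $\le \gamma$ with index $e$ (so $d^{(e)} = d^{\+M}$), the map $q_k \mapsto p_{\langle e,k\rangle}$ is the desired embedding: its Cauchy names are trivially computable (constant sequences in $\NN$), and isometry is immediate since $d^{\+U}(p_{\langle e,k\rangle}, p_{\langle e,k'\rangle}) = d^{(e)}(k,k') = d^{\+M}(q_k, q_{k'})$. A key observation in the universality argument is that distances from $p_{\langle e,k\rangle}$ to points in other components are always $\gamma$, which is $\ge d^{\+M}(q_k, q_{k'})$, so no ``cross-component shortcut'' corrupts the in-component distance.

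The main obstacle is the preliminary ``well-behaved enumeration'' lemma. The naive fix---replacing raw $d^{(e)}_s$ by the stage-$s$ metric closure (shortest-path metric on the current vertex set)---fails to be monotone in $s$, because a newly introduced vertex whose incident approximations lag behind their true values can create spurious shortcuts that shrink distances. A plausible remedy combines a slowly-growing vertex set $V_s$ with a running maximum: if $m_s$ denotes the shortest-path metric on $V_s$ at stage $s$, take $\hat d^{(e)}_s := \max(\hat d^{(e)}_{s-1}, m_s)$, using the fact that the pointwise maximum of two metrics is again a metric. Verifying that, when $d^{(e)}$ is genuinely a valid left-c.e.\ metric of diameter $\le \gamma$, this process still converges to $d^{(e)}$ in the limit requires a careful choice of the growth rate of $V_s$---slow enough that the shortcuts induced by under-approximated newly-introduced vertices are eventually ``paid off'' as their distances grow.
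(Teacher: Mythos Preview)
Your overall architecture---a disjoint union of all candidate left-c.e.\ metric spaces with cross-component distance $\gamma$, and the embedding $q_k \mapsto p_{\langle e,k\rangle}$---matches the paper exactly, and you correctly identify that the entire difficulty lies in the ``well-behaved enumeration'' step.

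However, your proposed fix for that step has a concrete gap. The running maximum $\hat d_s := \max(\hat d_{s-1}, m_s)$ is not well-defined as stated: $\hat d_{s-1}$ lives on $V_{s-1}$ while $m_s$ lives on the larger set $V_s$, and the ``max of two metrics is a metric'' fact applies only to metrics on the \emph{same} underlying set. If you extend $\hat d_{s-1}$ to new vertices by taking $m_s$ there, the triangle inequality can fail: take $x,y \in V_{s-1}$ with $\hat d_{s-1}(x,y)$ already large, and a new vertex $z \in V_s \setminus V_{s-1}$ whose stage-$s$ distances to $x$ and $y$ are still tiny; then $\hat d_s(x,y) > \hat d_s(x,z) + \hat d_s(z,y)$. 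No ``slow growth of $V_s$'' cures this, because you have no effective control over how badly $d^{(e)}_s$ under-approximates $d^{(e)}$ at the moment $z$ enters. And even setting this aside, your convergence claim (that $\hat d_s \to d^{(e)}$ when $d^{(e)}$ is a genuine metric) is left as a hope rather than an argument.

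The paper sidesteps the problem by \emph{not} insisting on a total metric for every index. It enumerates all left-c.e.\ \emph{pre-metric} functions $h_n$ (symmetric, zero on the diagonal, values left-c.e.\ in $[0,\gamma]$, but possibly violating the triangle inequality), and for each one uniformly produces a c.e.\ initial segment $W_n \subseteq \NN$ together with a left-c.e.\ function $g_n$ that is a genuine metric on $W_n$, with the guarantee that if $h_n$ already satisfied the triangle inequality then $W_n = \NN$ and $g_n = h_n$. Concretely, one defines a partial sequence of stages $t_0 < t_1 < \cdots$, advancing to $t_{i+1}$ only when the shortest-path correction $\tilde h_t$ on $\{0,\ldots,t_i\}$ is within $2^{-i}$ of the raw $h_t$ \emph{and} dominates its value at the previous checkpoint; $g_n$ is the supremum of these shortest-path snapshots, and $W_n$ is the set of vertices reached before the sequence (possibly) stalls. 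For a bad $h_n$ the stages may stop, leaving $W_n$ finite---which is harmless. The special points of $\+ U$ are then indexed not by all of $\NN \times \NN$ but by a computable bijection onto the c.e.\ set $\bigcup_n \{n\} \times W_n$. This ``allow partial failure and re-index'' manoeuvre is precisely the idea your proposal is missing, and it is what makes both the left-c.e.\ property and the triangle inequality for $\+ U$ go through without any delicate balancing of growth rates.
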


\begin{proof}  
For the duration of the proof, a  \emph{left-c.e.\ pre-metric function} will be a symmetric function $h\colon \NN \times \NN \to [0,\gamma]$ such that  $h(v,v) = 0 $ and $h(v,w)$ is a left-c.e.\ real, uniformly in $v,w$.  Such a function is presented by its c.e.\ undergraph $G=\{ \la v,w, q \ra \colon q \in \QQ^+_0 \lland v\le w \lland q \le h(v,w)\}$.  Let $h_t(x,y) = \max \{ q \colon \,  \la v,w, q \ra \in G_t\}$ so that  $h(v,w) = \sup_t h_t(v,w)$. 
\begin{lemma} Given a pre-metric  function $h$, one can effectively determine a c.e.\ set $W$ that is an initial segment of $\NN$, and a pre-metric function $g$ such that $g$ satisfies the triangle inequality on $W$; if $h$ satisfies the triangle inequality then $W = \NN$ and $g = h$. \end{lemma}

\begin{proof}[Proof of lemma]  We   extend  the direct construction of a $\PI 1$-complete equivalence relation   in \cite[Subsection 13.1]{LogicBlog:12}. (Note that the proof in \cite{Ianovski.Miller.ea:14} differs from the original proof.)

Define a partial computable sequence of stages by $t_{-1} = -1$, $t_0 = 0$ and   
	
\begin{eqnarray}  t_{i+1}  &  \simeq   \mu t > t_i &  \fa v,w \le t_i  \big [  h_t (v,w) - \wt h_t(v,w) \le \tp{-i}\big ]   \lland  \label{ln-1} \\  
   & &   \fa v,w \le t_{i-1}  [ \wt h_t(v,w) \ge  \wt h_{t_i} (v,w)],  \label{ln-2}
\end{eqnarray}
	where  
	\[ \wt h_t(v,w) = \inf  \{ \sum_{r<t_i}  h_t(q_r, q_{r+1}) \colon \, q_0, \ldots, q_{t_i} \le t_i \lland q_0= v \lland q_{t_i} = w\}. \]
Note that $\wt h_t(v,w) \le h_t(v,w)$, and $\wt h_t $ satisfies the triangle inequality on $[0,t_i]$.  
Informally, the sequence of stages can only be continued at a stage $t > t_i$ if the ``improved version''  $\wt h_t$ that satisfies the triangle inequality is close enough to $h_t$; furthermore, for values $v,w \le t_{i-1}$ its value  at $t$ must be at least the one at the last relevant stage $t_i$.  

If we define $t_{i+1}$ we also put $(t_{i-1}, t_i]$ into  W. Finally, for  $v,w \in \NN$, if $v\in W \lland w\in W$  we let  
\begin{equation*}    g(v,w)  = \sup \{ \wt h_{t_{i+1}}(v,w)  \colon \, v,w \le t_i\}, 	\end{equation*}
	and otherwise $g(v,w) = 0$.

Each $\wt h_{t_i+1}$ satisfies the triangle inequality on $[0,t_{i}]$, so from the monotonicity in $t_i$ it is clear that the  pre-metric function $g$   satisfies the triangle inequality on $W$. 
\begin{claim} Suppose that $h$  satisfies the triangle inequality. Then there are infinitely many stages $t_i$, so $W=\NN$ and $g=h$. \end{claim} 
We proceed by induction on $i$. Suppose that  $t_i$ is  defined.  Assume that  $t_{i+1}$ remains undefined.
Given $\epsilon >0$  let $s > t_i$ be a stage such that $\fa  x,y \le t_i \, [ h(x,y) - h_s(x,y) \le  \epsilon /t_i$. Then for each $v,w \le t_i$ we have  $ h(v,w) \le \wt h_s(v,w)  +  \epsilon$. Thus   $\lim_{s> t_i} \wt h_s(v,w) = h(v,w)$. This shows that both \eqref{ln-1} and \eqref{ln-2} are satisfied for sufficiently large $s$. Hence $t_{i+1}$ will be defined, contradiction. This concludes the inductive step and the proof of the lemma.   	
 \end{proof}	
Clearly one   can effectively list all the  left-c.e.\ pre-metric functions as $h_0, h_1, \ldots$  Let $\seq{W_n, g_n }\sN n  $ be the pairs of  a c.e.\ initial segment of $\NN$   and a  pre-metric function and given by the lemma. Note that $(W_n, g_n)$ presents a metric space for each $n$.  The required universal left-c.e.\ metric space $\+ U$ is an   effective  disjoint union of these spaces, where the distance between points in different components is $\gamma$. More formally,  let $f$ be a 1-1 computable function with range $\bigcup_n \{n\} \times W_n$.  Let $p_k =k$ be the special points of $\+ U$, and define  

\vsp

$d_\+ U(i,k)   = \gamma$ if $f(i)_0 \neq f(k)_0$; otherwise, let 

$d_\+ U(i,k) = g_p(f(i)_1, f(k)_1)$, where $p= f(i)_0 = f(k)_0$.

\vsp

\n Given a left-c.e.\ metric space $\+ M = (M,d, \seq {q_r} \sN r)$, there is  $n$ such that  $h_n(r,s) = d(q_r,q_s)$. This $h_n$ satisfies the triangle inequality. So  $q_r \to f^{-1}(\la n,r \ra)$ is a computable isometric embedding $\+ M  \to \+U$ as required. Note that the range of the embedding only contains special points of $\+ U$, which can be thought of as Cauchy names that are constant.
\end{proof} 
From the proof of the foregoing theorem for $\gamma =1 $, it is clear that $\{\la i,k \ra \colon \, d_\+ U (i,k) =0\}$ is a universal $\PPI$ equivalence relation under computable reducibility in the sense of \cite{Ianovski.Miller.ea:14}. In this way we have re-obtained their existence result for such an object.

The bound $\gamma$ on the diameter in Theorem~\ref{thm: Universal left-ce} is necessary. If we discard that bound, there is no universal object  in the class of all left-c.e. metric spaces with respect to computable isometric embeddings by the following fact.
\begin{proposition}
Let $\+ M = (M, d, \seq{p_k})$ be a left-c.e.\ metric space. There exists a left-c.e.\ ultrametric space $\+ A$ such that $\+ A$ can not be  isometrically embedded into $\+ M$. 
\end{proposition}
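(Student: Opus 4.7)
The plan is to distinguish cases according to the diameter of $\+M$, defined as $\beta := \sup_{i,j\in\NN} d(p_i,p_j)$. Being the supremum of left-c.e.\ reals, $\beta$ is either a finite left-c.e.\ real or $+\infty$.

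If $\beta$ is finite, I would pick any rational $q > \beta$ and take $\+A$ to be the two-point metric space with $d_\+A(a_0,a_1)=q$. Any two-point space is trivially ultrametric, and the space is  computable, hence left-c.e. No isometric embedding $g\colon \+A\to \+M$ can exist, since it would yield two points of (the completion of) $\+M$ at distance $q>\beta$, contradicting the definition of diameter.

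The harder case is $\beta=+\infty$, where the 2-point argument fails because $\+M$ accommodates arbitrarily large distances. Here I would construct $\+A$ with points $\{a_n\}_{n\in\NN}$ via a stagewise diagonalization against the countably many tuples $(e_0,e_1,\dots)$ of Turing indices for Cauchy names into $\+M$ (these are exactly the data determining a computable isometric embedding). A natural skeleton for $\+A$ is to take distances of the form $d_\+A(a_i,a_j)=r_{\max(i,j)}$ with $\{r_n\}$ strictly increasing and left-c.e.; as verified in the preceding theorem, this pattern is automatically ultrametric. At stage $k$, fresh points $a_{2k},a_{2k+1}$ would be allocated on a new ``branch'' (using a large $r_{2k+1}$ chosen above all previously committed distances to preserve ultrametric-ness), and their distance would be driven by the left-c.e.\ approximation of the candidate distance $\alpha^{(k)}_{2k,2k+1}$ produced by the $k$-th tuple, so as to force $d_\+A(a_{2k},a_{2k+1})\neq \alpha^{(k)}_{2k,2k+1}$.

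The main obstacle is reconciling three competing demands in the unbounded case: keeping $d_\+A$ left-c.e.\ (so only lower-bound commitments are allowed), maintaining the ultrametric inequality across all branches, and defeating each of the countably many embedding candidates. Because each $\alpha^{(k)}_{ij}$ is itself only a left-c.e.\ real, with no computable upper bound available, a priority-style ``wait-and-act'' schedule seems necessary: the commitment at stage $k$ is deferred until the relevant lower-bound approximations have grown enough to witness the separation. The infinite diameter of $\+M$, paradoxically, is what makes this feasible, since each new branch of $\+A$ may be placed at an arbitrarily large computable distance from the earlier ones, giving the room to sidestep previously committed values without ever needing to decrease a distance.
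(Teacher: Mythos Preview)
Your diagonalization instinct is right, but the case split on diameter is unnecessary, and your proposed resolution of the ``main obstacle'' misses a simple trick. The obstacle you name --- that the target distance $\alpha^{(k)}_{2k,2k+1}$ in $\+M$ is left-c.e.\ with no computable upper bound --- dissolves once you notice that $d_{\+A}$ is also left-c.e.\ and under your control: just let $d_{\+A}(a_{2k},a_{2k+1})$ equal the stage-$s$ approximation of $\alpha^{(k)}$ plus a fixed positive constant. This is non-decreasing in $s$, hence left-c.e., and in the limit strictly exceeds $\alpha^{(k)}$. No waiting, no priority, and no use of the infinite-diameter hypothesis.

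The paper carries this out in a single uniform construction. Rather than diagonalize against tuples of full Cauchy names, it observes that the first entry of a Cauchy name already gives a computable map $\phi_n$ from special points of $\+A$ to special points of $\+M$, landing within distance $1$ of the true image; it therefore suffices to meet the requirement $d_{\+A}(q_{2n},q_{2n+1})>d(\phi_n(q_{2n}),\phi_n(q_{2n+1}))+2$. At stage $s$ one sets $v_{n,s}=d_s(\phi_n(q_{2n}),\phi_n(q_{2n+1}))+3$ (or $0$ if $\phi_n$ has not yet converged on both arguments), puts $d_{\+A}(q_{2n},q_{2n+1})=v_n$, and for points from different pairs takes $d_{\+A}(q_a,q_b)=\max(v_n,v_k)$. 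This pairwise-max pattern is ultrametric regardless of any ordering among the $v_n$, so your monotone-$r_n$ skeleton --- which couples all the requirements through the ordering constraint $r_{2k+1}>r_{2k}$ --- is also an unnecessary complication.
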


\proof We make $\+ A$ a discrete metric space with domain consisting solely of special points $q_0,q_1, \ldots$ We use the pair $q_{2n}, q_{2n+1}$ to destroy the $n$-th potential computable embedding into $\+ M$. Such an embedding  $f$ would be given by computably associating to $q_k$ a Cauchy name for $f(q_k)$. In particular, the map sending $q_k$ to the first component  of this Cauchy name is computable. Note that this component is a special point  $p_r$  of $\+ M$  which has distance at most $1$ from $f(q_k)$.

Let $\seq {\phi_n} \sN n$ be an effective listing of partial computable functions with domain an initial segment of $\NN$. By the discussion above, it suffices to meet the requirements

\bc $R_n: \phi_n \, \text{total} \,  \RA  d_\+ A (q_{2n}, q_{2n+1}) >   d(  \phi_n(q_{2n}), \phi_n(q_{2n+1}) )+ 2$. \ec

\constr At stage $s$, for  $n= 0, \ldots,  s$, if $\phi_n(q_{2n}), \phi_n(q_{2n+1})$ are defined, let 
\bc $v_{n,s}= d_s(  \phi_n(q_{2n}), \phi_n(q_{2n+1}) )+3$;  \ec 
otherwise  $v_{n,s}=0$. 
Set $d_{\+ A,s} (q_{2n}, q_{2n+1}) = v_{n,s}$ towards satisfying $R_n$. 

For each  $n\neq k$, and $a = 2n, 2n+1$, $b= 2k, 2k+1$, set 
\bc $d_{\+ A,s} (q_a, q_b ) = \max (v_{n,s}, v_{k,s})$. \ec

It is clear that the distances in $\+ A$ remain finite. Also  the ultrametric inequality holds because trivially for reals $u,v,w$   we have  \bc $\max (u,w)  \le \max ( \max (u,v),  \max (v,w))$. \ec We apply this to distances of the form  $d_{\+ A} (q_{2n}, q_{2n+1} )$ to get the ultrametric inequality between points from three different pairs. 
Finally,   each requirement $R_n$ is met. 
\endproof
%
%
%
%

The structure of our universal objects appears to be  quite arbitrary. We ask whether an extra condition can be satisfied that would make them unique under computable isomorphism. This condition is effective homogeneity in the sense of Fra\"\i ss\'e theory.

\begin{question} Can a universal $\PPI$ equivalence relation, or a left-c.e.\ metric space, be effectively homogeneous? \end{question}

The bound $\gamma$ on the diameter in Theorem~\ref{thm: Universal left-ce} is necessary. If we discard that bound, there is no universal object  in the class of all left-c.e. metric spaces with respect to computable isometric embeddings by the following fact.
\begin{proposition}
Let $\+ M = \la M, d, \seq{p_k}$ be a left-c.e.\ metric space. There   exist a left-c.e.\ ultrametric space $\+ A$ such that $\+ A$ can not be  isometrically embedded into $\+ M$. 
\end{proposition}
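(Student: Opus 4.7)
The idea is a direct diagonalization against all potential computable isometric embeddings $\+ A \to \+ M$.  I would build $\+ A$ as a left-c.e.\ ultrametric space whose special points are enumerated as $q_0, q_1, q_2, \ldots$, and reserve the pair $q_{2n}, q_{2n+1}$ to defeat the $n$-th candidate embedding.  Since a computable isometric embedding $f$ assigns to each $q_k$ a Cauchy name in $\+ M$ uniformly in $k$, the first component of that name is a special point $p_{r(k)}$ of $\+ M$ with $d(p_{r(k)}, f(q_k)) \le 1$.  Thus it suffices to list all partial computable functions $\phi_n : \NN \to \NN$ and satisfy, for each $n$,
\[
R_n: \phi_n \text{ total} \RA d_\+ A(q_{2n}, q_{2n+1}) > d(p_{\phi_n(2n)}, p_{\phi_n(2n+1)}) + 2,
\]
because this forces $d_\+ A(q_{2n}, q_{2n+1}) > d_\+ M(f(q_{2n}), f(q_{2n+1}))$ for any computable $f$ with $\phi_n$ as its ``leading approximation,'' contradicting isometry.

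Next I would describe the construction.  At stage $s$, for each $n \le s$ such that $\phi_{n,s}(2n)$ and $\phi_{n,s}(2n+1)$ have converged, put
\[
v_{n,s} = d_s(p_{\phi_n(2n)}, p_{\phi_n(2n+1)}) + 3,
\]
and otherwise $v_{n,s} = 0$.  Define the approximate distance in $\+ A$ by
\[
d_{\+ A, s}(q_{2n}, q_{2n+1}) = v_{n,s}, \qquad d_{\+ A, s}(q_a, q_b) = \max(v_{n,s}, v_{k,s})
\]
for $a \in \{2n, 2n+1\}$, $b \in \{2k, 2k+1\}$, $n \ne k$.  Because $\+ M$ is left-c.e., each $v_{n,s}$ is non-decreasing in $s$, so each $d_\+ A(q_i, q_j) := \sup_s d_{\+ A, s}(q_i, q_j)$ is a left-c.e.\ real, uniformly in $i,j$.

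For the verification I would check three things.  (a) $d_\+ A$ is finite: if $\phi_n$ is total then $v_{n,s}$ converges to $d(p_{\phi_n(2n)}, p_{\phi_n(2n+1)}) + 3$, and otherwise $v_{n,s}$ stabilizes at $0$.  (b) $d_\+ A$ satisfies the ultrametric inequality: this is immediate from the general fact $\max(u,w) \le \max(\max(u,v), \max(v,w))$, applied to the values $v_{n,s}, v_{k,s}, v_{\ell,s}$ indexing three distinct pairs; triples within a single pair are trivial, and triples spanning two pairs reduce to the same maximum identity.  (c) $R_n$ is met: for any total computable isometric embedding $f : \+ A \to \+ M$, the map $k \mapsto$ (first index of the Cauchy name of $f(q_k)$) is some $\phi_n$, and by construction $d_\+ A(q_{2n}, q_{2n+1}) \ge d(p_{\phi_n(2n)}, p_{\phi_n(2n+1)}) + 3 > d_\+ M(f(q_{2n}), f(q_{2n+1})) + 1$, violating isometry.

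The main obstacle I anticipate is bookkeeping around the fact that we must commit to $d_\+ A$ being a metric at every stage while $v_{n,s}$ can keep increasing: a careless assignment of cross-pair distances could break the ultrametric inequality once some $v_{k,s}$ jumps past $v_{n,s}$.  Taking maxima across pairs, as above, sidesteps this cleanly because the max of left-c.e.\ reals is again left-c.e.\ and the ultrametric inequality is preserved pointwise at every stage, hence in the limit.  No cancellation is needed, so there is no real injury argument to manage.
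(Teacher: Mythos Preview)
Your proposal is correct and follows essentially the same approach as the paper: the same diagonalization against first-component maps $\phi_n$, the same definition $v_{n,s}=d_s(p_{\phi_n(2n)},p_{\phi_n(2n+1)})+3$, and the same cross-pair distance $\max(v_{n,s},v_{k,s})$ to secure the ultrametric inequality. The paper's verification is terser but identical in substance; your closing paragraph on why the max rule avoids injury is a welcome clarification that the paper leaves implicit.
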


\proof We make $\+ A$ a discrete metric space with domain consisting solely of special points $q_0,q_1, \ldots$. We use the pair $q_{2n}, q_{2n+1}$ to destroy the $n$-th potential computable embedding into $\+ M$. Such an embedding  $f$ would be given by computably associate to $q_k$ a Cauchy name for $f(q_k)$. In particular, the map sending $q_k$ to the first component  of this Cauchy name is computable. Note that this component is a special point  $p_r$  of $\+ M$  which has distance at most $1$ from $f(q_k)$.

Let $\seq {\phi_n} \sN n$ be an effective listing of partial computable functions with domain an initial segment of $\NN$. By the discussion above, it suffices to meet the requirements

\bc $R_n: \phi_n \, \text{total} \,  \RA  d_\+ A (q_{2n}, q_{2n+1}) >   d(  \phi_n(q_{2n}), \phi_n(q_{2n+1}) )+ 2$. \ec

\constr At stage $s$, for  $n= 0, \ldots,  s$, if $\phi_n(q_{2n}), \phi_n(q_{2n+1})$ are defined, let 
\bc $v_{n,s}= d_s(  \phi_n(q_{2n}), \phi_n(q_{2n+1}) )+3$;  \ec 
otherwise  $v_{n,s}=0$. 
Set $d_{\+ A,s} (q_{2n}, q_{2n+1} = v_{n,s}$. 

For each  $n\neq k$, and $a = 2n, 2n+1$, $b= 2k, 2k+1$, set 
\bc $d_{\+ A,s} (q_a, q_b ) = \max (v_{n,s}, v_{k,s})$. \ec

It is clear that the distances in $\+ A$ remain finite. Also  the triangle inequality holds because trivially for reals $u,v,w$   we have $\max (u,w)  \le \max ( \max (u,v),  \max (v,w))$. We apply this to distances $d_{\+ A} (q_{2n}, q_{2n+1} )$.
Finally,   each requirement $R_n$ is met. 
\endproof
%
%
%
%

\section{Gavruskin, Ng and Nies: Limit points of c.e.\ sets of special points in a CMS} 
Gavruskin,  Ng and Nies worked in Auckland. They generalised a result of LeRoux and Ziegler.  
\begin{thm}
Let $(X,d)$ be a perfect computable metric space with special points $\{q_i\}_{i\in\omega}$ and $\mathcal{C}\subseteq X$ be $\Pi^0_1(\emptyset')$ with respect to $(X,d)$. Then there is a c.e. set of special points $D\subseteq\{q_i\}_{i\in\omega}$ such that $\mathcal{C}$ equals  the set of limit points of $D$ in $(X,d)$.
\end{thm}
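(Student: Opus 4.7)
The plan is to choose a carefully refined $\Pi^0_2$-presentation $\mathcal{C}=\bigcap_m G_m$ of the target class, and then enumerate into $D$ the special points $q_j$ that come equipped with effective ``proximity certificates'' with respect to this representation. Since $\mathcal{C}$ is $\Pi^0_1(\emptyset')$, its complement is a $\Sigma^0_2$-union of basic open balls, yielding a uniformly effectively open decreasing sequence $(G^{\mathrm{orig}}_m)_{m\in\omega}$ with intersection $\mathcal{C}$. After first enriching the canonical basic cover of each $G^{\mathrm{orig}}_m$ to include every basic ball formally contained in some already listed basic ball, I set $G_m$ to be the union of those basic balls $B(q_a,r)$ such that $B(q_a,r+2^{-m})$ is formally contained in some listed basic ball of $G^{\mathrm{orig}}_m$. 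A straightforward triangle-inequality argument then shows $\overline{G_m}\subseteq G^{\mathrm{orig}}_m$, and, using the enrichment, $\mathcal{C}\subseteq G_m$. Together these give $\bigcap_m\overline{G_m}=\mathcal{C}$.

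With $(G_m)$ in hand I build $D$ in stages. List all basic balls canonically as $\{B_k\}_{k\in\omega}$. At stage $s$, for each $k\le s$ I search for a pair $(j,l)$ with $j,l\le s$ such that (i) $B(q_j,2^{-l})$ is formally contained in $B_k$ (that is, $d(q_j,\mathrm{center}(B_k))+2^{-l}<\mathrm{radius}(B_k)$ is verified in the stage-$s$ metric approximation), and (ii) for every $m\le s$, $B(q_j,2^{-l})$ is formally contained in some basic ball of the stage-$s$ enumeration of $G_m$. If such a pair is found, $q_j$ is enumerated into $D$. Strategies may act at many stages, so a given $B_k$ can contribute several special points.

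For $\mathcal{C}\subseteq D'$, fix $x\in\mathcal{C}$ and $\varepsilon>0$. By the enrichment plus refinement, for each $m$ there is a listed basic ball $V_m$ of $G_m$ with $B(x,\delta_m)\subseteq V_m$ for some $\delta_m>0$. For any basic ball $B_k$ with $x\in B_k\subseteq B(x,\varepsilon)$ — and there are infinitely many such — at a stage $s$ large enough that all $V_m$ ($m\le s$) are enumerated and $2^{-l}<\tfrac{1}{3}\min_{m\le s}\delta_m$ for an appropriate $l\le s$, one can pick a special $q_j$ close to $x$ meeting (i) and (ii); $q_j\in B(x,\varepsilon)$ then enters $D$. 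Using perfection of $X$ together with the freedom to vary $B_k$ and $q_j$, one produces infinitely many distinct such $q_j$'s, so $x\in D'$. For $D'\subseteq\mathcal{C}$, suppose $x\notin\mathcal{C}$ were a limit of distinct $p_n\in D$ enumerated at stages $s_n$ with witnesses $(j_n,l_n)$; since distinct, $s_n\to\infty$, so by (ii), for every fixed $m$, $B(p_n,2^{-l_n})\subseteq G_m$ for all large $n$. Hence $p_n\in G_m$ for large $n$, giving $x\in\overline{G_m}$ for every $m$; by $\bigcap_m\overline{G_m}=\mathcal{C}$, this forces $x\in\mathcal{C}$, a contradiction.

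The main obstacle is the refinement step producing a sequence $(G_m)$ with both $\bigcap_m\overline{G_m}=\mathcal{C}$ and $\mathcal{C}\subseteq G_m$ simultaneously. A naive margin thinning would exclude points of $\mathcal{C}$ that sit only in shallow listed basic balls, so the preliminary enrichment of each $G^{\mathrm{orig}}_m$'s basic cover — admitting every basic ball formally inside a listed one — is essential, as it ensures every $x\in\mathcal{C}$ has a sufficiently deep enclosing listed basic ball available at each level $m$. With that refinement in hand, the remaining verifications are routine unfoldings of the construction.
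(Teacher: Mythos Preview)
Your argument has a genuine gap: the claim $\mathcal{C}\subseteq G_m$ fails in general. Margin-thinning by $2^{-m}$ replaces $G^{\mathrm{orig}}_m$ by (essentially) its $2^{-m}$-interior, so any $x\in\mathcal{C}$ with $d(x, X\setminus G^{\mathrm{orig}}_m)<2^{-m}$ is excluded from $G_m$. The enrichment does not help: it leaves $G^{\mathrm{orig}}_m$ unchanged as a set, and since formal containment is transitive, asking $B(q_a, r+2^{-m})$ to sit formally inside an enriched listed ball is the same as asking it to sit formally inside an original one. For a concrete failure take $X=\mathbb{R}$, $\mathcal{C}=\{0\}$, and the legitimate presentation $G^{\mathrm{orig}}_m=B(0,2^{-2m})$; then $G_m=\emptyset$ for every $m\ge 0$, condition~(ii) is never met, and $D'=\emptyset\ne\mathcal{C}$. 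Nothing in your setup prevents $d(x,X\setminus G^{\mathrm{orig}}_m)$ from decaying faster than $2^{-m}$ for $x\in\mathcal{C}$, and there is no effective way to re-index an arbitrary $\Pi^0_2$ presentation to guarantee this.

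The paper resolves this tension by a different mechanism. Rather than fixing a $\Pi^0_2$ presentation of $\mathcal{C}$, it works with a $\Sigma^0_2$ index set $I$ for the complement, $X\setminus\mathcal{C}=\bigcup_{k\in I}B_k$, and for each ball $B_i$ it seeks special points $q\in B_i$ satisfying $d(q,q')>2^{-r}-2^{-i}$ only for the finitely many $B_j=B(q',2^{-r})$ with $j\le i$ currently believed to lie in $I$. The crucial point is that the margin $2^{-i}$ is tied to the index of the ball being processed, not to a level $m$: as $i\to\infty$ the margin vanishes, so small balls around any $x\in\mathcal{C}$ eventually contribute (since $x$ lies at positive distance from each of the finitely many relevant $B_j$ actually in the complement); conversely, if $x\notin\mathcal{C}$ lies in some $B_j=B(q',\delta)$ with $d(x,q')<\delta-\varepsilon$, then once $j$ is permanently in $I$ at a stage $s$ with $2^{-s}<\varepsilon$, no $B_k$ with $k>s$ can contribute a point of $B(q',\delta-\varepsilon)$, and $x$ is not a limit point of $D$.
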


\begin{proof}
We assume that $\{q_i\}_{i\in\omega}$ is a computable enumeration of the set of special points and fix a computable enumeration of all basic open balls $\{B_k\}_{k\in\omega}$, for instance, take $B_k=B(q_i,2^{-j})$ where $k=\langle i,j\rangle$. 

We also fix a $\Sigma^0_2$ enumeration of all basic open balls in $X-\mathcal{C}$, that is, we fix a $\Sigma^0_2$-set $I\subseteq\omega$ such that $\displaystyle X-\mathcal{C}=\cup_{k\in I} B_k$. We now define the desired set $D$ by doing the following for each~$i\in\omega$:

Wait for some stage $t>i$ such that there exists some special point $q$ satisfying:
\begin{itemize}
\item $q\in B_i$, and
\item For each $j\leq i$ such that $B_j=B(q',2^{-r})$ has not left $X-\mathcal{C}$ between stages $i$ and $t$, we require that $d(q,q')>2^{-r}-2^{-i}$.
\end{itemize}
If $q$ is found enumerate it in $D$, until we have enumerated $2^i$ many special points in $D$. We say that $B_i$ contributes $q$ to $D$. 

It is easy to check that these conditions are $\Sigma^0_1$, and so we produce a c.e. set $D$ of special points. Note that since the space is perfect, each $B_i$ either contributes nothing to $D$ or it contributes exactly $2^i$ many special points to $D$.

We now verify that the set of limit points of $D$ is $\mathcal{C}$. 

First suppose that  $x\in\mathcal{C}$. Since the space is perfect fix a sequence $\left\{q_{h(i)}\right\}_{i\in\omega}$ of distinct special points such that for every $i$, $d(q_{h(i)},x)<2^{-i}$. It suffices to show that for each $i$, the ball $B(q_{h(i)},2^{-i+1})$ contributes at least one point to $D$.

We proceed by contradiction. Suppose the ball $B_{l}=B(q_{h(i)},2^{-i+1})$ with $l=\langle h(i),i+1\rangle$ contributes nothing to $D$. Since $q_{h(m)}\in B_l$ for all $m>i$, it follows that for each $m>i$, we must have the failure of the second condition for some $j\leq l$ such that $B_j$ is permanently in the complement $X-\mathcal{C}$. Since there are only finitely many such $j\leq l$, there is some $j$ such that $B_j=B(q',2^{-r})$ is permanently in $X-\mathcal{C}$ where there are infinitely many $m>i$ such that $d(q_{h(m)},q')\leq 2^{-r}-2^{-l}$. However for each $m$ we have 
\begin{align*} d(x,q') &\leq d(x,q_{h(m)})+d(q_{h(m)},q')\\
&\leq 2^{-m}+(2^{-r}-2^{-l})\\
&= 2^{-r} -(2^{-l}-2^{-m}).
\end{align*}
By choosing a large enough  $m$ we conclude that $x\in B_j\subseteq X-\mathcal{C}$, a contradiction.

Now suppose that $x\not\in\mathcal{C}$. Fix a $j$ such that $x\in B_j=B(q,\delta)$. Suppose that $B_j$ is enumerated permanently in the complement $X-\mathcal{C}$ at stage $s>j$. Fix $\varepsilon >0$ such that $d(x,q)<\delta-\varepsilon$, and assume that $2^{-s}<\varepsilon$. We claim that after stage $s$, no special point $p\in B(q,\delta-\varepsilon)$ can be enumerated in $D$ for the sake of any $B_k$, $k>s$. Since each $B_0,B_1,\cdots B_s$ contributes finitely many points to $D$, $x$ cannot be a limit point of $D$.

Suppose on the contrary that at some stage larger than $s>j$ we have some special point $p\in B(q,\delta-\varepsilon)$ enumerated in $D$ for the sake of $B_k$, for some $k>s$. By construction we must have $d(q,p)>\delta-2^{-k}>\delta-2^{-s}>\delta-\varepsilon$, a contradiction.
\end{proof}


  
\part{Complexity of equivalence relations}
\section{Turetsky: Complete Equivalence Relations}

  We work in the framework of $m$-reductions between arithmetical  equalence relations. For background see e.g.\  \cite{Ianovski.Miller.ea:14}  and the references there.

\begin{prop}
For every oracle $X$, there is an equivalence relation on $\omega$ which is d.c.e.\ in $X$ and effectively complete for such relations.
\end{prop}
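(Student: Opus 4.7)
By relativising throughout, I may assume $X = \emptyset$; the general case is identical modulo notation. Fix a uniformly computable enumeration $(D_e)_{e \in \omega}$ of all d.c.e.\ subsets of $\omega^2$, writing $D_e = W_{i_1(e)} \setminus W_{i_2(e)}$ with natural approximation $D_e[s] = W_{i_1(e)}[s] \setminus W_{i_2(e)}[s]$. Reserve disjoint columns $C_e = \{\langle e, n\rangle : n \in \omega\}$, and decree $x \not\mathrel{E} y$ for $x, y$ lying in different columns. With this layout, the candidate reduction $f_e(n) := \langle e, n\rangle$ is uniformly computable in $e$, so to establish effective universality it suffices to arrange that $E|_{C_e}$ (under the identification $\langle e, n\rangle \leftrightarrow n$) equals $D_e$ whenever $D_e$ is itself a d.c.e.\ equivalence relation.

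Thus the problem reduces to the following: uniformly in $e$, produce a d.c.e.\ equivalence relation $\Phi_e$ on $\omega$ such that $\Phi_e = D_e$ whenever $D_e$ is a d.c.e.\ equivalence relation. The naive attempt of taking the stagewise equivalence closure of $D_e[s]$ fails: a small example with three elements whose pairwise relations oscillate between $W_{i_1(e)}$ and $W_{i_2(e)}$ shows that the closure can force an indirect pair to switch status three times, violating the two-change d.c.e.\ budget even when $D_e$ itself is a bona fide d.c.e.\ equivalence relation. I would instead use a controlled-closure strategy: maintain at each stage a finite equivalence relation on the portion of $C_e$ seen so far; when a pair newly enters $W_{i_1(e)}$, attempt to merge the corresponding classes; when a pair newly enters $W_{i_2(e)}$, attempt to split. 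Any action that would push some pair past its two-change allowance is suppressed and replaced by a conservative ``collapse'' of the offending classes into a single equivalence class, which is one-time $0\mapsto 1$ move absorbing any excess.

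The main obstacle is verifying that, when $D_e$ is truly a d.c.e.\ equivalence relation, the collapse mechanism is never triggered and the controlled closure exactly recovers $D_e$. The key observation is that in this case every apparent transitivity violation in $D_e[s]$ must eventually resolve itself (either a chain link is enumerated into $W_{i_2(e)}$ or the missing target pair is enumerated into $W_{i_1(e)}$), so the sequence of actions dictated by the controlled closure is in precise correspondence with the two-mind-change trajectory of $D_e$'s own d.c.e.\ approximation. The verification is a bookkeeping argument tracking, for each pair in $C_e$, the specific enumerations causing its status changes, and showing that the collapse-triggering scenarios are exactly those in which $D_e$ fails to be an equivalence relation. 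Once this is established, combining the $\Phi_e$'s across columns produces a global d.c.e.\ equivalence relation $E$ on $\omega$, and the map $e \mapsto f_e$ witnesses effective completeness.
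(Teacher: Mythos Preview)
Your framework (columns, uniform reductions $f_e$) is fine, and you correctly identify that the task reduces to uniformly producing a d.c.e.\ equivalence relation $\Phi_e$ equal to $D_e$ whenever $D_e$ is one. The gap is in the controlled-closure construction: your key claim---that the collapse mechanism is never triggered when $D_e$ is a genuine d.c.e.\ equivalence relation---is false under any natural reading of your description. Consider $D_e$ on $\{0,1,2\}$ with the following d.c.e.\ trajectory (symmetric pairs throughout): at stage~$1$, $(0,1)$ enters; at stage~$2$, $(1,2)$ enters; at stage~$3$, $(0,1)$ exits; at stage~$4$, $(0,2)$ enters; at stage~$5$, $(1,2)$ exits. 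The limit $D_e$ has classes $\{0,2\}$ and $\{1\}$, a perfectly good d.c.e.\ equivalence relation. But under your merge/split protocol, the pair $(0,2)$ goes $0\to 1$ at stage~$2$ (transitive closure through~$1$), then $1\to 0$ at stage~$3$ (when $(0,1)$ leaves and $0$ is split off), and would need $0\to 1$ again at stage~$4$---a third change. The pair $(0,1)$ is in the same situation. Your collapse then fires; if collapse means permanent merger of $\{0,1,2\}$, you get $\Phi_e(0,1)=1\neq 0=D_e(0,1)$; if it means suppression of the merge, you never get $\Phi_e(0,2)=1=D_e(0,2)$. Either way $\Phi_e\neq D_e$. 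The underlying problem is that transitive closure injects changes into pairs that $D_e$ itself has not touched at that stage, and those injected changes are not bounded by $D_e$'s own two-change budget.

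The paper's construction sidesteps this entirely by never forming a closure. It maintains a counter $n_s$ and at stage $s$ checks whether $D_e[s]\!\upharpoonright_{n_{s-1}+1}$ already happens to be an equivalence relation; if so, it copies that block into $\Phi_e$ and increments the counter, otherwise it waits. Since $\Phi_e$ changes on a pair only when $D_e$ itself changes on that pair between copy stages, the d.c.e.\ bound is inherited directly with no amplification. When $D_e$ really is an equivalence relation, every finite restriction of its approximation eventually stabilises to an equivalence relation, so $n_s\to\infty$ and $\Phi_e=D_e$. In your example this procedure simply declines to copy at stage~$4$ (where transitivity fails on $\{0,1,2\}$) and resumes at stage~$5$, so $(0,2)$ changes only once.
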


\begin{proof}
Given a binary relation $E$ which is d.c.e.\ in $X$, it suffices to uniformly construct an equivalence relation $F$ which is d.c.e.\ in $X$ and such that $F = E$ if $E$ is an equivalence relation.  We do this by copying $E$, but slowly (similar to the construction of a complete $\Pi^0_1$-equivalence relation    in \cite[Subsection 13.1]{LogicBlog:12}).  We keep a value $n_s$, which indicates how much of $E$ we are copying.

Begin by defining $F_0 = \{ (a,a) : a \in \omega\}$ and $n_0 = 0$.

At stage $s > 0$, if $E_s\! \upharpoonright_{n_{s-1}+1}$ is an equivalence relation, define
\[
F_s = E_s\! \upharpoonright_{n_{s-1}+1} \cup \{(a,a) : a \in \omega\}
\]
and $n_s = n_{s-1}+1$.  Otherwise, define $F_s = F_{s-1}$ and $n_s = n_{s-1}$.

Observe that $F$ is d.c.e.\ as described, since the only time we change $F$'s value on a pair is when we are copying $E$, and $E$ is d.c.e..  Further, it is easy to check by induction that every $F_s$ is an equivalence relation, and so $F$ is an equivalence relation.  Finally, if $E$ is an equivalence relation, $n_s$ will grow without bound, and so we will have $F = E$.
\end{proof}

Note that this construction works relative to any oracle, while the construction of a complete $\Pi^0_1$ equivalence relation does not.  Indeed, since there is no complete $\Pi^0_2$ equivalence relation, that other construction cannot be relativized to $\emptyset'$.  The difference between the two constructions is the reduction from $E$ to $F$: when $E$ is an equivalence relation, the reduction from $E$ to $F$ in this construction is the identity, and so is computable without $X$; when $E$ is an equivalence relation, the reduction from $E$ to $F$ in the $\Pi^0_1$ construction is a fast growing function that depends on the presentation of~$E$.

Note also that this construction can be generalized to $n$-c.e.\ sets, while the construction for $\Pi^0_1$ equivalence relations can be generalized to co-$n$-c.e.\ sets.  Similarly, the proof that there is no complete $\Pi^0_2$ equivalence relation can be generalized to show that there is no complete co-$n$-c.e.\ equivalence relation in $\emptyset'$.  The key difference is that with $n$-c.e.\ sets, elements begin outside the set, while for co-$n$-c.e.\ sets, they begin in.

\newpage
\part{Shen and Bienvenu: $\KPt$-trivial, $\KPt$-low and $\MLR$-low sequences- a tutorial}

 These notes came out of lectures  by L.~Bienvenu at Poncelet laboratory, CNRS, Moscow, notes by A.~Shen,  LIRMM, Montpellier, CNRS, UM2, on leave from IITP RAS, Moscow.

A remarkable achievement in algorithmic randomness and algorithmic information theory was the discovery of the notions of $\KPt$-trivial, $\KPt$-low and Martin-L\"of-random-low sets: three different definitions turns out to be equivalent for very non-trivial reasons. This paper, based on the course taught by one of the authors (L.B.) in Poncelet laboratory (CNRS, Moscow) in 2014, provides an exposition of the proof of this equivalence and some related results.

We assume that the reader is familiar with basic notions of algorithmic information theory (see, e.g., \cite{Shen:00} for introduction and \cite{Shen.etal:12} for more detailed exposition). More information about the subject and its history can be found in~\cite{Nies:book,Downey.Hirschfeldt:book}.

\section{$\KPt$-trivial sets: definition and existence}

Consider an infinite bit sequence and complexities of its prefixes. If they are small, the sequence is computable or almost computable; if they are big, the sequence looks random.   This idea goes back to 1960s and appears in the algorithmic information theory in different forms (Schnorr--Levin criterion of randomness in terms of complexities of prefixes, the notion of algorithmic Hausdorff dimension). The notion of $\KPt$-triviality is on the low end of this spectrum: we consider sequences that have prefixes of minimal possible (prefix) complexity:

\begin{definition}
A bit sequence $a_0a_1a_2\ldots$, is called \emph{$\KPt$-trivial} if it has minimal possible prefix complexity of its prefixes, i.e., if 
      $$
\KP(a_0 a_1\ldots a_{n-1})=\KP(n)+O(1).      
      $$
\end{definition}

\begin{itemize}
\item Note that $n$ can be reconstructed from $a_0\ldots a_{n-1}$, so $\KP(a_0\ldots a_{n-1})$ cannot be smaller than $\KP(n)-O(1)$. 
\item Every computable sequence is $\KPt$-trivial (since $a_0\ldots a_{n-1}$ can be computed given~$n$).
\item Similar definition for plain complexity has no sense, since this would imply that sequence $A$ is computable (it is enough to have $\KS(a_0 a_1\ldots a_{n-1})\le\log n +O(1)$ for computability, see, e.g.,\cite[problems 48 and 49]{Shen.etal:12}).
\end{itemize}

With prefix complexity we have a weaker property:  
\begin{theorem}
Every $\KPt$-trivial sequence is $\mathbf{0}'$-computable.
\end{theorem}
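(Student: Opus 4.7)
The plan is to combine the fact that the prefix complexity $\KPt$ is $\mathbf{0}'$-computable with a finiteness property of the set of $\KPt$-trivials, and then extract $A$ as an isolated branch through a $\mathbf{0}'$-decidable tree. First I would verify that $\KPt$ is $\mathbf{0}'$-computable: since $\KPt$ is upper semicomputable, the approximations $\KPt_s(\sigma)$ form a decreasing computable sequence with limit $\KPt(\sigma)$, and using $\mathbf{0}'$ one can decide the $\Sigma^0_1$ condition $\KPt(\sigma)\leq k$ uniformly in $\sigma$ and $k$.

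Next, letting $c$ be the constant from the $\KPt$-triviality of $A$, I would form the $\mathbf{0}'$-decidable set
\[
T = \{\sigma \in \{0,1\}^{<\omega} : \KPt(\sigma) \leq \KPt(|\sigma|) + c\},
\]
which contains every initial segment of $A$. The heart of the proof is to show that the class $\mathcal{K}_c$ of $c$-$\KPt$-trivial sequences is finite (the classical bound being $|\mathcal{K}_c| = O(2^c)$). The counting argument proceeds via a prefix-free machine whose input is a short description of $n$ concatenated with a short index bit-string: given such a machine, one obtains an upper bound of $\KPt(n) + \log|\mathcal{K}_c| + O(1)$ on $\KPt(A_i \upharpoonright n)$ for each member $A_i$, and a Kraft-style estimate combined with the incompressibility of $n$ forces $|\mathcal{K}_c|$ to be finite.

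Granted the finiteness of $\mathcal{K}_c$, each $c$-$\KPt$-trivial is isolated in the prefix-tree of $\mathcal{K}_c$: there is some length $n_0$ at which the finitely many sequences pairwise separate. From that point on, $A \upharpoonright n$ is the unique extension of $A \upharpoonright n_0$ lying in $T$ at every subsequent level. Enumerating candidates at each length via $\mathbf{0}'$ and selecting the one whose extensions remain in $T$ gives a $\mathbf{0}'$-algorithm for $A$, yielding $A \leq_T \mathbf{0}'$.

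The main obstacle I expect is the counting lemma. The naive Kraft bound on $|T \cap 2^n|$ yields only $2^{\KPt(n)+c}$, which grows without bound in $n$; what is needed is a global bound on the branching of infinite paths through $T$, not a level-by-level bound. To obtain this one has to construct the prefix-free machine so that distinct $c$-$\KPt$-trivials consume distinct portions of the machine's domain, which requires a careful bookkeeping argument separating the Kraft contributions of different trivials. Once this is in place the remainder of the proof — unwinding the isolation of $A$ inside $T$ with a $\mathbf{0}'$-oracle — is routine.
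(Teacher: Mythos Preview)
Your overall plan---build a $\mathbf{0}'$-decidable tree containing all prefixes of $A$, show it has only finitely many infinite paths, and extract $A$ as an isolated path---is sound and is exactly the skeleton of the paper's argument. The difference is only in how the bounded width is obtained. The paper applies the chain rule for prefix complexity directly: from $\KP((A)_n)=\KP(n)+O(1)$ and $\KP((A)_n)=\KP(n)+\KP((A)_n\mid n,\KP(n))+O(1)$ one gets $\KP((A)_n\mid n,\KP(n))=O(1)$, so given the $\mathbf{0}'$-computable pair $(n,\KP(n))$ there are only $O(1)$ candidates for $(A)_n$. This yields the bounded-width $\mathbf{0}'$-tree in one line, and simultaneously proves Chaitin's counting theorem rather than invoking it.

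The genuine soft spot in your write-up is the sketch of the counting lemma. The machine you describe---a shortest description of $n$ concatenated with an index $i<|\mathcal{K}_c|$---is circular: to decode $i$ into $A_i\upharpoonright n$ the machine would already need access to the list of $c$-$\KPt$-trivials, which is precisely what you do not yet have (and whose finiteness you are trying to establish). Moreover the inequality $\KP(A_i\upharpoonright n)\le\KP(n)+\log|\mathcal{K}_c|+O(1)$ goes the wrong way: it is an upper bound, weaker than the hypothesis $\KP(A_i\upharpoonright n)\le\KP(n)+c$, and cannot by itself force $|\mathcal{K}_c|$ to be small. The correct level-wise bound is obtained from the semimeasure inequality
\[
\sum_{|\sigma|=n} 2^{-\KP(\sigma)} \le O\bigl(2^{-\KP(n)}\bigr),
\]
which holds because the left side, as a function of $n$, is a lower-semicomputable discrete semimeasure. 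If there are $N$ strings of length $n$ with $\KP(\sigma)\le\KP(n)+c$, this gives $N\cdot 2^{-\KP(n)-c}\le O(2^{-\KP(n)})$, hence $N=O(2^c)$. With this in place your isolation-and-extraction step goes through exactly as in the paper (and your set $T$ should be closed downward under prefixes to actually be a tree, though this is cosmetic). Note that this semimeasure bound is just the chain-rule argument in disguise, so once the gap is repaired your proof and the paper's are essentially the same.
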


Here $\mathbf{0}'$ is the oracle for the halting problem.

\begin{proof}
Assume that $\KP((a)_n)=\KP(n)+O(1)$. Recall that $(a)_n=a_0a_1\ldots a_{n-1}$ is equivalent to $(n,(a)_n)$, and use the formula for the complexity of a pair:
   $$
\KP((a)_n)=\KP(n,(a)_n)=\KP(n)+\KP((a)_n\cnd n,\KP(n))
   $$
(with $O(1)$-precision); this means that $$\KP((a)_n\cnd n,\KP(n))=O(1).$$ So $(a)_n$ belongs to a $\mathbf{0}'$-computable (given $n$) list of $n$-bit strings that has size $O(1)$. Therefore, $a$ is a path in a $\mathbf{0}'$-computable tree of bounded width and is therefore $\mathbf{0}'$-computable. (Indeed, assume that the tree has $k$ infinite paths that diverge before some level $N$. At levels after $N$ we can identify all the paths, since all other vertices have finite subtrees above them, and we may wait until only $k$ candidates remain.)
\end{proof}

The existence of (non-computable) $\KPt$-trivial sets is not obvious, but not very difficult, even if we additionally require the set to be enumerable.

\begin{theorem}
There exists an enumerable undecidable $\KPt$-trivial set.
\end{theorem}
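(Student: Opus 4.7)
The plan is to build $A$ by a cost-function construction, certifying $\KP$-triviality via the Kraft--Chaitin theorem. First I would define, for each $x$ and each stage $s \geq x$, the cost
\[
c(x,s) = \sum_{x < n \leq s} 2^{-\KP_s(n)},
\]
where $\KP_s$ is the stage-$s$ approximation to prefix complexity. Kraft's inequality $\sum_n 2^{-\KP(n)} \leq 1$ ensures that the limits $c(x) = \lim_s c(x,s)$ exist and satisfy $c(x) \to 0$ as $x \to \infty$. Heuristically, $c(x,s)$ is the total KC-weight of new descriptions one must issue if $x$ is enumerated into $A$ at stage $s$: the enumeration changes $A_s \upharpoonright n$ for every $n$ with $x < n \le s$, and each new prefix must be described at length about $\KP_s(n)$.

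Next I would set up non-computability requirements $R_e$ for $e = 0, 1, 2, \ldots$: if $\varphi_e$ is total and $\{0,1\}$-valued, then $A \neq B_e := \{y : \varphi_e(y) = 1\}$. The strategy for $R_e$ reserves a large fresh threshold $x_e$; at each stage $s$ it looks for a $y \geq x_e$ with $\varphi_{e,s}(y) \downarrow = 0$ and $c(y,s) < 2^{-e-2}$, and enumerates the first such $y$ found into $A$, permanently satisfying $R_e$. In parallel I would build a prefix-free machine $M$ via Kraft--Chaitin, issuing the request $\langle \KP_s(n) + 1,\ A_s \upharpoonright n \rangle$ at every stage $s$ and every $n \le s$ at which either $A_s \upharpoonright n$ has just changed or $\KP_s(n)$ has just strictly decreased.

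For verification, I would bound the total KC-weight issued. The enumeration contribution is at most $\tfrac{1}{2} \sum_e 2^{-e-2} < \infty$, since each $R_e$ acts at most once with cost $< 2^{-e-2}$. The $\KP_s(n)$-decrease contribution at a fixed $n$ telescopes: if $\KP_s(n)$ takes the strictly decreasing values $k_1 > k_2 > \cdots$, then $\sum_i 2^{-k_i - 1} \le 2^{-\KP(n)}$, and summing over $n$ gives at most $1$ by Kraft. Thus $M$ exists and yields $\KP_M(A \upharpoonright n) \leq \KP(n) + O(1)$, so $A$ is $\KP$-trivial; $A$ is c.e.\ by construction; and if $A$ were computable, say $A = B_e$ for some total $\{0,1\}$-valued $\varphi_e$, then either $B_e$ is coinfinite---in which case the strategy for $R_e$ eventually succeeds, since $c(y) \to 0$---or $B_e$ is cofinite---in which case $A$, being sparse (at most one witness per requirement), cannot equal $B_e$.

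The main obstacle will be the Kraft--Chaitin bookkeeping: one has to confirm that both sources of requests sum to a finite weight. The enumeration source is controlled by the obedience budget $2^{-e-2}$, which is available precisely because $c(x) \to 0$; the $\KP_s(n)$-decrease source is controlled by the telescoping estimate and Kraft's inequality. One must also remember to reissue descriptions whenever $\KP_s(n)$ drops, since otherwise the fixed description lengths would eventually exceed $\KP(n) + O(1)$ and $\KP$-triviality would fail.
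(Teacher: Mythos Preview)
Your proposal is correct and uses essentially the same cost-function construction as the paper: both bound the total ``cost'' of enumerations by a geometric series and rely on the fact that the tail sums $\sum_{n>x}2^{-\KP(n)}$ tend to zero, so one can always wait for a cheap witness. The only cosmetic differences are that the paper builds a \emph{simple} set (intersecting every infinite $W_e$, with the Post bound $u>2e$) rather than diagonalizing directly against total $\{0,1\}$-valued $\varphi_e$, and it phrases the weight bookkeeping via the a~priori probability $\m$ on tree vertices rather than explicit Kraft--Chaitin requests.
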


The existence of noncomputable $\KP$-trivial sequences was shown by Solovay in 1970s (answering Chaitin's question).

\begin{proof}
 We want $\KP((a)_n)$ to be (almost) equal to $\KP(n)$. In terms of a priori discrete probability, we want to match $\m(n)$ at the vertex $(a)_n$. In game terms: when opponent (who ``lower semicomputes'' $\m$) increases the weight of $n$ (the sum of his total weights never exceeds $1$), we should increase the weight of some vertex (=string) of length $n$, achieving the same (up to $O(1)$-factor) weight. Moreover, all these vertices should lie on a tree path that corresponds to a enumerable set, and the sum of our weights should be bounded by some constant.

It would be trivial for computable $a$: just fix some computable $a$ and then place at $(a)_n$ the same weight as the opponent uses for $n$. But we want $a$ to be non-computable. It is convenient to ensure that $a$ is a characteristic function of a simple set in Post's sense. 

Recall that a simple set $A$ is a set with infinite complement that has non-empty intersection with every $W_n$ that is infinite. Here by $W_n$ we denote $n$-th enumerable set in some natural numbering of all c.e.~sets. As in the classical Post's construction, we want for every $n$ to put in $A$ some element of $W_n$ greater than $2n$, and then forget about $W_n$. (The bound $2n$ is needed to guarantee that the set has infinite complement.) Post did it without reservations (as soon as some element greater than $2n$ is discovered in $W_n$, it is added to $A$), but now, when such an element is found, we have to pay something for it: when we add some number $u$ to $A$, the corresponding path $a$, the characteristic sequence for $A$, changes; the weight used on $(a)_u$ is lost, and should be recreated at the new place. Moreover, all the weights put at the extensions of $(a)_u$ are lost, too (since now these vertices are not on the path, and the new weights should be placed along the new path).  This lost amount can be called the \emph{cost} of the action.

Now we can explain the construction. Initially our set $A$ is empty, and the corresponding path $a$ is all zeros. While observing the growth of the discrete a priori probability $\m(n)$, we replicate the corresponding values along the path $a$. We also enumerate all $W_n$ in parallel, and add some element $u\in W_n$ to $A$, if three conditions are satisfied:
\begin{itemize}
\item $u>2n$;
\item $W_n$ was not satisfied before; 
\item the cost of this action is small (say, less than $2^{-n}$, so the total cost for all $n$ is bounded). 
\end{itemize}
Here the cost of the action is the total weight placed at the vertices in $a$ starting from level $u$ (this weight is lost and should be replicated along the new path).

In this way the total weight used by us is bounded: the lost weight is bounded by $\sum 2^{-n}$, and the replicated weight is bounded by $\sum \m(n)$.

If $W_n$ is infinite, it contains arbitrarily large elements, and the cost of adding $u$ is bounded by $$\m(u)+\m(u+1)+\m(u+2)+\ldots,$$ which is guaranteed to go below threshold for large $u$. So every infinite $W_n$ will be served at some moment.
\end{proof}

This proof can be represented in a game form. In such a simple case this looks like an overkill, but the same technique is useful in more complicated cases, so it is instructive to look at this version of the proof. The game field consists of the set of the natural numbers (\emph{lengths}), the full binary tree, and sets $W_1,W_2,\ldots$ (of natural numbers). The opponent increases the weights assigned to lengths: each length has some weight that is initially zero and can be increased by the opponent at any moment by any rational non-negative number; the only restriction is that the total weight (of all lengths) should not exceed~$1$. Also the opponent may add new elements to any of the sets $W_i$ (initially they are empty). We construct a path $a$ in the binary tree that is a characteristic sequence of some set $A$ (initially empty) by adding elements to $A$; we also increase the weights of vertices of the binary tree (in the same way as the opponent does for lengths); our total weight should not exceed~$2$.

One should also specify when the players can make moves; it is not important (the rules of the game always allow each player to postpone moves), but let us agree that the players make their moves in turns and every move is finite (finitely many weights of lengths and vertices are increased by some rational numbers, and finitely many new elements are added to $W_i$ and $A$). This is the game with full information, the moves of one player are visible to the other one.

The game is infinite, and the winner is determined in the limit (assuming that both player obey the weight restrictions). Namely, we win if
\begin{itemize}
\item for the limit path $a$ our weight of $(a)_n$ is not less than the opponent's weight of $n$;
\item for each $n$, if $W_n$ is infinite, then $W_n$ has a common element with $A$.
\end{itemize}
The winning strategy is as described: we match the opponent's weight along the current path, and also we add some $u$ to $A$ (changing the path and matching the opponent's weights along the new path) if $u$ belongs to $W_n$, is greater than $2n$ and the loss (our total weight along the current path above $u$) does not exceed $2^{-n}$. 

This is a computable winning strategy. Indeed, the limit weights of all lengths form a converging series, so if $W_n$ is infinite, it has some element that is greater than $2n$ and for which the loss (bounded by the tail of this series) is less than $2^{-n}$.

Imagine now that we use this computable winning strategy against the ``blind'' computable opponent that ignores our moves and just enumerates from below the a priori probability (as lengths' weights) and the sets $W_i$ (the list contains all enumerable sets). Then the game is computable, our limit $A$ is an enumerable simple set, and our weights for the prefixes of $a$ (and therefore $\m((a)_n)$, since the limit weights form a lower semicomputable semimeasure) match $\m(n)$ up to $O(1)$-factor.

\section{$\KPt$-trivial and $\KPt$-low sequences}

Now we know that non-computable $\KPt$-trivial sequences do exist.  Our next goal is to show that they are ``almost computable''. Namely, they are $\KPt$-low in the sense of the following definition.

Consider a bit sequence $a$; one can relativize the definition of prefix complexity using $a$ as a oracle (the decompressor may use the values of $a_i$ in its computation). For every oracle this relativized complexity $\KP^a$ does not exceed (up to $O(1)$ additive term) the non-relativized prefix complexity, since the decompressor may ignore the oracle. But it can be smaller or not, depending on the oracle.

\begin{definition}
A sequence $a$ is $\KPt$-low if $\KP^a(x)=\KP(x)+O(1)$.
\end{definition}

In other words, $\KPt$-low oracles are useless for compression (better to say, decompression) purposes. 

Obviously, computable oracles are low; the question is whether there exist non-computable low oracles. One can note that \emph{``classical'' undecidable sets, like the halting problem, are not low}: with oracle $\mathbf{0}'$ the table of complexities of all $n$-bit strings has complexity $O(\log n)$, but its non-relativized complexity is $n-O(1)$.  (One can also consider the relativized and non-relativized complexities of the prefixes of Chaitin's $\Omega$-numbers.) Note also that \emph{$\KPt$-low oracles are $\KPt$-trivial} (since $\KP^a ((a)_n)=\KP^a(n)+O(1)$: the sequence $a$ is computable in the presence of oracle $a$). 

It turns out that the reverse implication is true, and this is quite surprising. For example, one may note that \emph{the notion of a $\KPt$-low sequence is Turing-invariant} (i.e., depends only on the computational power of the sequence) but for $\KPt$-triviality there are no reasons to expect this (since the definition deals with prefixes). 

On the other hand, it is easy to see that \emph{if $a$ and $b$ are two $\KPt$-trivial sequences, then their join} (the sequence $a_0b_0a_1b_1a_2b_2\ldots$) \emph{is also $\KPt$-trivial}. Indeed, as we have mentioned, the $\KPt$-triviality of a sequence $a$ means that $\KP((a)_n\cnd n, \KP(n))=O(1)$; if also $\KP((b)_n\cnd n, \KP(n))=O(1)$, then (bound for the complexity of a pair) $\KP((a)_n,(b)_n\cnd n, \KP(n))=O(1)$, so 
\bc $\KP(a_0b_0a_1b_1\ldots a_{n-1}b_{n-1}\cnd n,\KP(n))=O(1)$, \ec and therefore $\KP(a_0b_0a_1b_1\ldots a_{n-1}b_{n-1})=\KP(n)+O(1)$. It remains to note that $\KP(n)=\KP(2n)$ and that we can extend the equality to sequences of odd length, since adding one bit changes the complexity of the sequence and its length only by $O(1)$. The similar result for low sequences is not obvious: if each of the sequences $a$ and $b$ separately do not change the complexity function, why their join is also powerless in this regard? Not obvious at all.

The proof of the equivalence (every $\KPt$-trivial sequence is $\KPt$-low) requires a rather complicated combinatorial construction. It may be easier to start with a weaker statement and show that every $\KPt$-trivial sequence is weaker (as an oracle) than the halting problem. (It follows from the equivalence statement, since $\mathbf{0}'$ is not $\KPt$-trivial, see above.) This argument is given in the next section. On the other hand, the full proof is not so complicated, so the reader may also skip the next section and to read the equivalence proof without this training.

\section{$\KPt$-trivial sequence cannot compute $\mathbf{0}'$}

In this section we prove that a $\KPt$-trivial sequence cannot compute $\mathbf{0}'$, in the following equivalent version:

\begin{theorem}
No $\KPt$-trivial sequence can compute all enumerable sets.
\end{theorem}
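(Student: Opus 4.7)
The plan is to proceed by contradiction: assume some $\KPt$-trivial sequence $A$ computes all c.e.\ sets, so in particular $\emptyset' \leq_T A$ via a Turing reduction $\Gamma$ with monotone use function $\gamma$. Fix a constant $c$ such that $\KP((A)_n) \leq \KP(n) + c$ for all $n$, and fix a $\Delta^0_2$-approximation $A_s \to A$, available by the previous theorem in this section. The aim is to build a lower semicomputable discrete semimeasure $\nu$ whose total mass remains finite, yet for which $\nu$ forces $\KP((A)_n) - \KP(n) \to \infty$ along $A$, contradicting the choice of $c$.

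First, I would dualise the game of the previous section. There, the builder of a $\KPt$-trivial played \emph{against} an opponent who increased $\m(n)$; the cost of changing the current path below level $\ell$ was the total weight previously deposited on the old path above $\ell$. Here I take the role of the adversary, while the builder is the hypothetical $\KPt$-trivial $A$ together with its companion reduction $\Gamma$. The semimeasure $\nu$ is lower semicomputed along the binary tree: for each length $n$ and stage $s$ at which the current approximation $A_s \uhr n$ is deemed stable, deposit weight $\m_s(n)$ on the node $A_s \uhr n$, and refresh along the new path whenever $A_s \uhr n$ later changes.

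Second, I would show how the presence of $\Gamma$ forces infinitely many costly changes. Since $\emptyset'$ is infinite, it contains elements $n_0 < n_1 < \ldots$ enumerated at stages $s_0 < s_1 < \ldots$. Each such $n_k$ requires $A_s \uhr {\gamma(n_k)}$ to eventually shift before $\Gamma^A(n_k)$ correctly outputs $1$. The central trick, adapted from the cost-function machinery of Nies, is to wait before committing $\nu$-weight: delay the investment into level $\gamma(n_k)$ until both $\m(\gamma(n_k))$ and the current prefix $A_s \uhr {\gamma(n_k)}$ have apparently settled. When the inevitable change later arrives, all of the deposited weight is lost and must be redeposited along the new path, effectively doubling its contribution to the total mass of $\nu$ -- except that the old weight remains on the abandoned node, so the builder's semimeasure dominating $\m$ cannot recycle it.

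The main obstacle is the quantitative balancing: I must ensure that enough weight is extracted per enumeration without exhausting the finite mass budget of $\nu$ prematurely. The resolution is the standard cost-function bookkeeping: define the cost $\mathrm{cost}(n,s) = \sum_{m \in \emptyset'_{s} \setminus \emptyset'_{n}} 2^{-\KP_s(m)}$ (or an analogous weighted sum over $\gamma$-uses), and argue that (i) any $\Delta^0_2$ approximation $A_s$ obeying this cost function admits a matching lower semicomputable semimeasure, so $A$ being $\KPt$-trivial forces $\lim_n \sup_s \mathrm{cost}(n,s) = 0$; yet (ii) because $\Gamma^A = \emptyset'$ is total, for every $N$ there are infinitely many $n$ with $\gamma(n) \geq N$, and each such enumeration contributes a uniformly bounded-below amount to $\mathrm{cost}$, forcing $\lim_n \sup_s \mathrm{cost}(n,s) > 0$. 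The contradiction between (i) and (ii) is where the proof ends; making the cost estimate tight despite the freedom in the approximation $A_s$ is the principal technical difficulty.
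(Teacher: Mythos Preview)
Your proposal has a real gap. The semimeasure $\nu$ you describe deposits weight on \emph{nodes} $A_s\uhr n$, but a lower semicomputable semimeasure on strings only yields \emph{upper} bounds on $\KPt$ of those strings --- it can make $\KP((A)_n)$ smaller, not larger, so it cannot force $\KP((A)_n)-\KP(n)\to\infty$. To push the inequality in the right direction you must put weight on \emph{lengths} (driving $\KP(n)$ down) and then argue that no semimeasure on strings can match this along $A$; but $\m$ on strings is fixed and does not ``follow'' any approximation, so changes in $A_s$ do not by themselves waste any of its mass. Your fallback to cost functions does not rescue this: claim~(i), that every $\KPt$-trivial has an approximation obeying the standard cost function, is true but is exactly the hard ``main lemma'' direction, proved by the golden-run method and at least as deep as the theorem you are after; and claim~(ii) is stated about the wrong quantity --- the standard cost function always satisfies the limit condition $\lim_n\sup_s\mathrm{cost}(n,s)=0$, independently of $A$ or $\Gamma$.

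The paper takes a different route that supplies the missing leverage. Rather than using the fixed set $\emptyset'$, it \emph{builds} a c.e.\ set $W$ during the construction. In game form: we put weight on lengths, the opponent puts weight on strings (to witness $\KPt$-triviality of his approximated $A$), and we also enumerate $W$; we win if either the opponent cannot match our length-weights along the limit $A$ or $\Gamma^A\ne W$. For fixed $\Gamma$ and $c$, call a string \emph{strong} if it forces $\Gamma^A(m)=0$ for a reserved $m\notin W$. Small ``drip'' processes at strong vertices gradually increase length-weights, forcing the opponent to pile matching weight onto strong strings; once nearly all of our budget has been matched there, we enumerate $m$ into $W$, making every strong string incompatible with $\Gamma^A=W$ and hence stranding all that opponent weight off any admissible final path. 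This already wins for $c<2$, and a recursion (running the same scheme inside each strong subtree instead of a single drip) amplifies it to arbitrary $c$. The crucial idea your sketch lacks is that \emph{we} decide when the target set changes, which is precisely what converts the opponent's commitments into genuine wasted weight.
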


(Note that together with the existence result proved above it gives a solution of the Post problem, the existence of non-complete enumerable undecidable sets.)

\begin{proof}
The proof consists of several steps.

\subsubsection*{Game reformulation.}
We use the game argument and consider the following game with full information. The opponent approximates some sequence $A$ by changing the values of Boolean variables $a_0,a_1,a_2,\ldots$ (so $A(i)$ is the limit value of $a_i$). He also assigns increasing weights to strings; the total weight should not exceed $1$. We assume that initially all weights are zeros (and that the initial values of $a_i$ are also zeros---just to be specific).

We assign increasing weights to integers (lengths); the sum of our weights is also bounded by $1$. Also we construct some set $W$ by (irreversibly) adding elements to it.

The opponent wins the game if
\begin{itemize}
\item each variable $a_i$ is changed only finitely many times (so some limit sequence $A$ appears);
\item the (opponent's) limit weight of $(A)_i$, the $i$-bit prefix of $A$, is greater than (ours) limit weight of $i$, up to some multiplicative constant;
\item the set $W$ is Turing-reducible to $A$.
\end{itemize}

It is enough to construct a computable winning strategy in this game. Indeed, assume that some $\KPt$-trivial $A$ computes $\mathbf{0}'$. We know that then $A$ is limit computable, so the opponent can computably approximate it, and at the same time approximate from below the a priori probabilities for all strings (ignoring our moves, as usual). Our strategy will them behave computably, generating some lower semicomputable semimeasure on lengths,  and some enumerable set $W$. Then, according to the definition of the game, either this semimeasure is not matched by $\m((A)_i)$, or $W$ is not Turing-reducible to $A$. In the first case $A$ is not $\KPt$-trivial; in the second case $A$ is not Turing-complete.

\subsubsection*{Reduction to a game with fixed machine and constant.}
How can we win computably this game? First we consider a simpler game when the opponent has to declare in advance the constant $c$ that relates the two semimeasures, and the machine $\Gamma$ that reduces $W$ to $A$. Imagine that we can win this game: for each $c$ and $\Gamma$ we have a (uniformly) computable strategy that wins in the $c$-$\Gamma$-game (defined in a natural way). Since the constant $c$ in the definition of $c$-$\Gamma$-game is arbitrary, we may use $c^2$ instead of $c$ and assume by scaling that we can force the opponent to spend more than $1$ while using only $1/c$ total weight and allowing him to match our moves up to factor $c$.

Now we mix the strategies for different $c$ and $\Gamma$ into one strategy. Note that two strategies that simultaneously increase weights of some lengths, can only help each other, so we need only to ensure that the sum of the increases made by all strategies is bounded by~$1$.  More care is needed for the other part: each of the strategies constructs its own $W$, so we should isolate them. For example, to mix two strategies, we split $\mathbb{N}$ into two parts $N_1$ and $N_2$, say, odd and even numbers, and let the first/second strategy construct a subset of $N_1$/$N_2$. Of course, each strategy should then beat not the machine $\Gamma$ itself, but its restriction on $N_i$ (the composition of $\Gamma$ and the embedding of $N_i$ into $\mathbb{N}$). In a similar way we can mix countably many strategies (splitting $\mathbb{N}$ into countably many countable sets in a computable way). 

It remains to consider some computable sequence $c_i\to\infty$ such that $\sum 1/c_i \le 1$ and a computable sequence $\Gamma_i$ that includes every machine $\Gamma$ infinitely many times. (The latter is needed because we want every $\Gamma$ to be beaten with arbitrarily large constant $c$.) Combining the strategies for these games as described, we get a computable winning strategy for the full game.

It is convenient to scale the $c$-$\Gamma$-game and require the opponent to match our weights exactly (without any factor) but allow him to use total weight $c$ (instead of~$1$).  We will prove the existence of the winning strategy by induction (the strategy for some $c$ will be used to construct a strategy for larger $c$). To start, let us first construct the strategy for $c<2$. 

\subsubsection*{Winning a game with $c<2$: strong strings}

This winning strategy deals with some fixed machine $\Gamma$ and ensures $\Gamma^A\ne W$ at one fixed point (say, $0$); the other points are not used. Informally, we wait until the opponent puts a lot of weight on strings that imply $0\notin \Gamma^A$. If this never happens, we win in one way; if this happens, we then add $0$ to $W$ and win in another way.

Let us explain this more formally. We say that a string $u$ is \emph{strong} if it (as a prefix of $A$) forces $\Gamma^A(0)$ to be equal to $0$ (i.e., $\Gamma$ outputs $0$ on input $0$ using only oracle queries in $u$). Simulating the behavior of $\Gamma$ for different oracles, we can enumerate all strong strings. During the game we look at the following quantity:
\begin{quote}
\emph{the total weight that our opponent has put on all (known) strong strings}.
\end{quote}
This quantity may increase because the opponent puts more weight or because we discover new strong strings, but never decreases. We try to make the opponent to increase this quantity (see below about how it is done). As we shall see, if he refuses, he loses the game (and the element $0$ remains outside $W$). If this quantity becomes close to $1$, we change our mind and add $1$ into $W$. After that all the weight put on strong strings is lost: they cannot be the prefixes of $A$ such that $\Gamma^A=W$, if $1\in W$. So we can make our total weight equal to $1$ in an arbitrary way (adding weight somewhere if the total weight was smaller than~$1$), and the opponent needs to use additional weight $1$ along some final $A$ that avoids all strong strings, therefore his weight becomes close to $2$.

\subsubsection*{Winning a game with $c=1$: gradual increase}

So our goal is to force the opponent to increase the total weight of strong strings.  Let us describe our strategy as a set of substrategies (processes) that run in parallel. For each strong vertex $x$ there is a process $P_x$; we start it when we discover that $x$ is strong. This process tries to force the opponent to increase the weight of some vertex above $x$ (this vertex is automatically strong); we want to make the lengths of these strings different, so for every string $x$ we fix some number $l_x$ that is greater than $|x|$, the length of $x$.

Process $P_x$ is activated when the current path goes through vertex $x$ and sleeps otherwise (for example, it may never become active). So---when running---it always sees that the current path goes through $x$. The process gradually increases the weight of length $l_x$: it adds some small $\delta_x$ to the weight of $l_x$ and waits until the opponent matches\footnote{A technical remark: note that in our description of the game we have required that the opponent's weight along the path is strictly greater than our weight: if this is true in the limit, it happens on some finite stage.} this weight along the current path (whatever this path is), then increases the weight again by $\delta_x$, etc. The values of $\delta_x$ are fixed for each $x$ in such a way that $\sum_x \delta_x$ is small.  The process repeats this increase until it gets a termination signal from the supervisor (see below when this happens). Note that at any moment the current path can change in such a way that $x$ is no more on it; then $P_x$ is put to sleep until $x$ becomes on the current path again (and this may never happen).

The supervisor sends the termination signal to all the processes when (and if) the total weight they have used comes close to $1$. After that the strategy adds $0$ to $W$, as explained above. 

Let us show that it is indeed the winning strategy. Consider a game where it is used. By construction, we do not violate the weight restriction. If the opponent has no limit path, he loses. If there is a limit path, but it does not has strong vertices on it, he also loses ($\Gamma^A(0)\ne 0$ for the limit $A$ while $0\notin W$). If $x$ is a strong vertex on the limit path, the process $P_x$ is active starting from some moment, so it can use all the weight (unless other processes do it earlier); in both cases the termination signal is sent. It remains to note that at this moment (when the termination signal is sent) the total weight put on strong string is close to $1$: indeed, all the weight put by us is matched by the opponent, except for one portion of size $\delta_x$ for each vertex $x$; the used weight is close to $1$, and only small amount ($\sum_x\delta_x$) can be lost.

This argument shows how we can win the $c$-$\Gamma$-game for $c<2$.

\subsubsection*{Induction statement}

The idea of the induction step is simple: instead of forcing the weight increase for (some extension of) a strong vertex $u$ directly, we can recursively call the described strategy at the subtree rooted at $u$ (adding or not adding some other element to $W$ instead of $0$), and save our weight (almost twice, since we know how to win the game with $c$ close to~$2$). In this way we can win the game for arbitrary $c<3$, and so on.

To be more formal, we consider a recursively defined process  
$P(k,x,\alpha, L, M)$ with the following parameters:
\begin{itemize}
\item $k>0$ is a rational number, the required coefficient of weight increase;
\item $x$ is the root of the subtree where the process operates;
\item $\alpha>0$ is also a rational number, our ``budget'' (how much weight we are allowed to use);
\item $L$ is an infinite set of integers (lengths where our process may increase weight);\footnote{To use infinite sets as parameters, we should restrict ourselves to some class of infinite sets. For example, we may consider infinite decidable sets and represent them by programs enumerating their elements in the increasing order.}
\item $M$ is and infinite set of integers (numbers that our process may add to $W$).
\end{itemize}

The process can be started (or waked up) only when $x$ is a prefix of the current path $A$, and is put to sleep when $A$ changes and this is no more true, until $x$ becomes the path of the current path again. It is guaranteed that $P$ never violates the rules (about $\alpha$, $L$, and $M$). Running in parallel with other processes (as a part of the game) and assuming that other processes do not touch lengths in $L$ and numbers in $M$, the process $P(k,x,\alpha,L,M)$ guarantees (if not put into sleep forever or terminated externally) that one of the following possibilities happens:
\begin{itemize}
\item either limit path $A$ does not exist,
\item or $W\ne \Gamma^A$ for limit $A$,
\item or the opponent never matches some weight put on some length in $L$,
\item or the opponent spends more than $k\alpha$ weight on vertices above $x$ with lengths in $L$.
\end{itemize}
    
\subsubsection*{Induction base}     
     
Now we can adapt the construction of the previous section and construct a process $P(k,x,\alpha,L,M)$ with these properties for arbitrary $k<2$. For each $y$ above $x$ we select some $l_y\in L$ (different for different $y$), and select some positive $\delta_y$ such that $\sum\delta_y$ is small compared to the budget $\alpha$. We choose some $m\in M$ and consider $y$ (a vertex above $x$) as strong if it guarantees that $\Gamma^A(m)=0$. Then for all strong $y$ we start the process $P_y$ that  is activated when $y$ is in the current path and increases the weight of $l_y$ in $\delta_y$-steps waiting until the opponent matches it. We terminate all the processes when (and if) the total weight used becomes close to $\alpha$, and then add $m$ to $W$ (making all the weight placed by the opponent on strong vertices useless).

The restrictions are satisfied by the construction. Let us check the promised property. If there is no limit path, there is nothing to check. If the limit path does not go through $x$, we have no obligations (the process is put into sleep forever). Assume that the limit path goes through $x$ and the process was not terminated externally. If there is no strong vertex on the limit path $A$, then $m\notin W$, but $\Gamma^A(m)$ is not $0$, so $W\ne\Gamma^A$. If there is a strong vertex $y$ on the limit path, then the process $P_y$ was started and worked without interruptions (starting from some moment), so either some of the $\delta_y$-increases was not matched (third possibility) or the termination signal was sent. In the latter case the total weight used was close to $\alpha$, and after adding $m$ to $W$ it is lost, so either our weight is not matched or almost $2\alpha$ is spent by the opponent above $x$ (recall that all processes are active only when the current path goes through~$x$). 

\subsubsection*{Induction step}

The induction step is similar: we construct the process $P(k,x,\alpha,L,M)$ in the same way as for the induction base. The difference is that instead of $\delta_y$-increase in the weight of $l_y$ the process $P_y$ now recursively calls $$P(k',y,\delta_y\,,L',M')$$ for vertex $y$ with some smaller $k'$ (one can take $k'=k-0.5$), the budget $\delta_y$ and some $L'\subset L$ and $M'\subset M$. If the started process forces the opponent to spend more than $k'\delta_y$ on the vertices above $y$ with lengths in $L'$, then a new process $$P(k',y,\delta_y\,,L'',M'')$$ is started for some other $L''\subset L$ and $M''\subset M$, etc. All the subsets $L',L'',\ldots$ should be disjoint (and also disjoint for different $y$), as well as $M',M'',\ldots$. So we should first of all split $L$ into a sum of disjoint infinite subsets $L_y$ parametrized by $y$ and then split each $L_y$ into $L_y'+L_y''+\ldots$ (for the first, second, etc. recursive calls). The same is done for $M$, but here we in addition to the sets $M_y$ select some $m$ outside all $M_y$. This $m$ is used to define strong vertices as those that guarantee $\Gamma^A(m)=0$. 

We start processes $P_y$ (as described above: each of them makes a potentially infinite sequence of recursive calls with the same $k'=k-0.5$ and budget $\delta_y$) for all discovered strong vertices $y$ (putting them into sleep when $y$ is not on the current path). We take note of the total weight used by all $P_y$ (for all $y$) and sent a termination signal to all $P_y$ when this weight comes close to the threshold $\alpha$ (so it never crosses this threshold).

Let us show that we achieve the declared goal (assuming that the recursive calls achieve it).  First, the restrictions about $L$, $M$ and $\alpha$ are guaranteed by the construction. If there is no limit path, we have no other obligations. If the limit path exists but does not go through $x$, our process will be externally put to sleep, and again we have no obligations. So we may assume that the limit path goes through $x$ and that our process is not  terminated externally. If the limit path does not go through any strong vertex (defined using $m$), then $W\ne \Gamma^A$ for the limit path $A$, since $m\notin W$ and $\Gamma^A(m)$ does not output $0$. If the limit path goes through some strong $y$, the process $P_y$ will be active starting from some point, and makes recursive calls $P(k',y,\delta_y\,,L',M')$,\ldots, $P(k',y,\delta_y\,,L'',M'')$, etc. Now we use the inductive assumption and assume that these calls achieve their declared goals. Consider the first call. If it succeeds by achieving one of three first alternatives, then we are done. If it succeeds by achieving the fourth alternative, i.e., by spending more than $k'\delta_y$ on the weights from $L'$, then the second call is made, and again either we are done or the opponent spends more than $k'\delta_y$ on the weights from $L''$. And so on: at some point we either succeed globally, or exhaust the budget and our main process sends the termination signal to all $P_y$. Then all the weight spent, except for the $\delta_y$'s for the last call at each vertex, is matched by the opponent with factor $k'$, and on the final path the opponent has to match it with factor $1$, so we are done (assuming that $k<k'+1$ and $\sum \delta_x$ is small enough).

This finishes the induction step, so we can win every $c$-$\Gamma$-game by calling the recursive process at the root. We have proven that  $\KPt$-trivial sets do not compute $\mathbf{0}'$.
\end{proof}

\section{$\KPt$-trivial sequences are $\KPt$-low}

Now we want to prove the promised stronger result:

\begin{theorem}
All $\KPt$-trivial sequences are $\KPt$-low.
\end{theorem}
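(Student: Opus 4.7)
The plan is to adapt the game-theoretic framework of the previous section so that, instead of a c.e. set $W$ that $A$ cannot compute, we construct a non-relativized enumerable semimeasure $\nu$ on strings with $\sum_x \nu(x) = O(1)$ dominating $\m^A$ up to a multiplicative constant. By universality of $\m$ this yields $\m(x) \geq \Omega(\m^A(x))$, hence $\KP(x) \leq \KP^A(x)+O(1)$ by Levin's coding theorem; combined with the trivial bound $\KP^A(x) \leq \KP(x)+O(1)$, this gives that $A$ is $\KPt$-low. Recall that $\m^A$ admits a universal representation $\m^A(x) \asymp \sum_{\sigma \prec A} w(\sigma,x)$ for an enumerable weight function $w \colon 2^{<\omega} \times 2^{<\omega} \to \QQ_{\geq 0}$ subject to the path constraint $\sum_x \sum_{\sigma \prec B} w(\sigma,x) \leq 1$ for every infinite $B$. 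So the task reduces to a combinatorial transfer problem from the opponent's $w$ to our $\nu$.

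We set up the following variant of the game. The opponent approximates a sequence $A$ by flipping bits, enumerates weights $\mu(\sigma)$ on strings with $\sum_\sigma \mu(\sigma) \leq 1$ (representing $\m$), and enumerates a weight function $w(\sigma,x)$ subject to the path constraint above. We enumerate weights $\ell(n)$ on lengths with $\sum_n \ell(n) \leq 1$, and a semimeasure $\nu(x)$ on strings with $\sum_x \nu(x) \leq \beta$. The opponent wins if every bit of the approximation stabilises to a limit $A$, if $\mu((A)_n) \geq \ell(n)$ for all $n$ (the $\KPt$-trivial matching condition, up to a fixed constant absorbed into the scaling), and if yet some $x$ satisfies $\sum_{\sigma \prec A} w(\sigma,x) > c \cdot \nu(x)$ for a specified constant $c$. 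A computable winning strategy for us, pitted against the blind computable opponent who lower-approximates $\m$, the universal $w$, and any $\Delta^0_2$-approximation of a candidate sequence, forces the limit $A$ either to fail $\KPt$-triviality or to be dominated by our $\nu$; this yields $\KPt$-lowness for every $\KPt$-trivial $A$.

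To produce the winning strategy we follow the inductive reduction of the previous section, isolating games with a fixed multiplicative constant $k$ in the spirit of the $c$-$\Gamma$-game and mixing countably many of them via an effective splitting of the length axis. The new golden-run idea runs as follows. When the opponent enumerates an increment $w_0$ to $w(\sigma,x)$ with $\sigma$ a prefix of the current $A_s$, we would like to mirror it by adding $w_0$ to $\nu(x)$; the risk is that $A$ may later move off $\sigma$, after which the mirrored weight can no longer be matched by opponent contributions along the final path. To control this loss, we first place a calibrated increment $\delta$ of length-weight on $|\sigma|$; by the $\KPt$-trivial condition the opponent must commit at least $\delta$ of $\mu$-weight to the current prefix at that level, and each subsequent change of $A$ above $|\sigma|$ irrecoverably burns this weight from the opponent's global budget of $1$. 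Hence only $O(1/\delta)$ prefixes at any given level can ever be ``certified,'' and on each certified prefix the total opponent $w$-weight summed over all $x$ is bounded by $1$ by the path constraint. The recursive processes $P(k,x,\alpha,L,M)$ of the previous section transpose, with the terminal action ``add $m$ to $W$'' replaced by ``commit the transferred weight to $\nu(x)$.''

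The main obstacle lies in the global bookkeeping that keeps $\sum_x \nu(x)$ uniformly bounded while simultaneously admitting contributions from every target string and every level of the recursion. As in Nies's original proof, this is managed by assigning summable budgets $\delta_y$ at each internal recursion node and by observing that at a ``golden'' node --- one above which the opponent has no further power to revise the prefix --- the transferred weight is amortised against the opponent's permanently committed budget, so no further bleeding occurs and the total $\nu$-mass telescopes against the opponent's $w$- and $\mu$-budgets of $1$. Assembling the resulting computable strategy and feeding it to the blind opponent that enumerates the universal $\m$ and $w$ together with a $\Delta^0_2$-approximation of the $\KPt$-trivial sequence $A$ produces a non-relativized enumerable $\nu$ witnessing $\KP(x) \leq \KP^A(x) + O(1)$, completing the proof that every $\KPt$-trivial sequence is $\KPt$-low.
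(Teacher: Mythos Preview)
Your overall plan matches the paper's approach: set up a game in which the opponent approximates $A$ and enumerates weights on strings, while you enumerate a semimeasure on lengths (to exploit $\KPt$-triviality) and a semimeasure $\nu$ on objects (to dominate $\m^A$); then win the fixed-constant versions of the game by a recursive amplification argument and mix them. The representation of $\m^A$ via path-bounded weights $w(\sigma,x)$ is exactly the paper's ``labeling'' device, and the golden-run intuition is the right one.

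Where your outline goes wrong is the claimed transposition of the processes $P(k,x,\alpha,L,M)$ from the previous section. That argument hinged on the parameter $M$ and the discrete action ``add $m$ to $W$'': one identified \emph{strong} vertices (those forcing $\Gamma^A(m)=0$), made the opponent commit weight above them, and then burned that weight by putting $m$ into $W$. None of this structure has an analogue here: there is no $W$, no $M$, and no notion of strong vertex. In the paper's proof the base-level process $P_y$ is attached to \emph{every} vertex $y$ (not just ``strong'' ones), reads the label $(i,\eta)$ at $y$, slowly raises the weight of a dedicated length $l_y$ up to $\varepsilon\eta$, and only then adds $\varepsilon\eta$ to object $i$. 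The two key accounting facts are that (i) the object-weight spent never exceeds the length-weight spent, and (ii) along the limit path the total request mass is at most $1$, so the on-path cost is at most $\varepsilon$ and the off-path length-weight must be matched twice by the opponent. Your sketch replaces this with an ``only $O(1/\delta)$ certified prefixes per level'' count, which is a different bookkeeping and is not what makes the amplification go through; the paper's inductive process is $S(c,x,\alpha,L)$, with no $M$, and the recursion replaces each $\delta_y$-increment by a call $S(c',y,\delta_y,L')$ for $c'=c-0.5$.

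So the approach is right, but the asserted ``transpose the previous section with `add $m$ to $W$' replaced by `commit weight to $\nu$'\,'' is not a correct description of the mechanism; you need to redo the base case and the induction around the per-vertex label requests rather than around strong vertices and $W$.
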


\begin{proof}
Let us start with some preparations.

First, we already know that $\KPt$-trivial sequences are $\mathbf{0}'$-computable. So it is enough to show that every $\KPt$-trivial $\mathbf{0}'$-computable sequence is $\KPt$-low.

Second, let us provide a convenient way to represent the a priori probability $\m^A(\cdot)$ with oracle $A$. (We need to show that it is not significantly  larger than $\m(\cdot)$.) We may describe $\m^A$ in the following way. The sequence $A$ is a path in a full binary tree. Imagine that at every vertex of a tree there is a label of the form $(i,\eta)$ where $i$ is an integer, and $\eta$ is a non-negative rational number. This label is read as ``please add $\eta$ to the weight of $i$''. We assume that the labeling is computable. We also require that for every path in the tree the sum of all rational numbers along the path does not exceed $1$. Having such a labeling, and a path $A$, we can obey all the labels along the path, and get a semimeasure on integers. This semimeasure is semicomputable with oracle $A$.

 In fact, this construction is quite general: for every machine $M$ with oracle generating a lower semicomputable discrete semimeasure, we can find a labeling that gives the same semimeasure (in the way described) for every oracle.   Indeed, we may simulate the behavior of $M$ for different oracles $A$, and look which part of $A$ was read when some increase in the output semimeasure happens. This can be used to create a label at some tree vertex. We need to make the labeling computable; also, according to our assumption, each vertex adds weight only to one object, but both requirements can be easily fulfilled by postponing the weight increase (we push the queue of postponed requests up the tree). If the sum of the increase requests along some path becomes greater than $1$, this means that for $A$ extending this path we do not get a semimeasure. As usual, we can trim the requests and guarantee that we get semimeasures along all paths, not changing the existing semimeasures. 

We may assume now that some computable labeling is fixed such that for every path $A$ the resulting semimeasure (obtained by fulfilling all the requests along the path) is $\m^A$.

\subsubsection*{Game description}

We prove this theorem by providing a winning strategy in some game. (If you have read the previous section, note that the part related to $\KPt$-triviality is the same.) The opponent approximates some sequence $A$ by changing the values of Boolean variables $a_0,a_1,a_2,\ldots$ and assigns increasing weights to strings; the total weight should not exceed $1$. We assume that initially all weights are zeros (and that the initial values of $a_i$ are also zeros).

We assign increasing weights to integers (\emph{lengths}); the sum of our weights is also bounded by $1$. This semimeasure will be compared to the opponent's weights along his path. We also assign increasing weights to another type of integers, called \emph{objects}: on them we compare our semimeasure with the semimeasure $\m^A$ (determined by the opponent's limit path $A$)\footnote{Formally speaking, we construct two semimeasures on integers; to avoid confusion, it is convenient to call their arguments ``lengths'' and ``objects''.}.

\medskip

The opponent wins the game if the following three conditions are satisfied:
\begin{itemize}
\item the limit sequence $A$ exists;
\item opponent's semimeasure along the path \hbox{$*$-exceeds} our semimeasure on lengths, i.e., there exists some $c>0$ such that for all $i$ the opponent's weight of $(A)_i$ is greater than (our weight of $i$)/$c$;
\item our semimeasure on objects does not \hbox{$*$-exceed} $\m^A$.
\end{itemize}

It is enough to construct a computable winning strategy in this game. Indeed, then we can play it against $\mathbf{0}'$-computable sequence $A$ and the universal semimeasure on strings. Then our moves are computable, and we generate semimeasures on lengths and objects. The winning condition guarantees now that (assuming the limit path $A$ exists) either the opponent's semimeasure along the path is not maximal (so $A$ is not $\KPt$-trivial), or that $\m^A$ is $*$-bounded by $\m$ (so $A$ is $\KPt$-low).

As in the previous theorem, we consider an easier (for us) version of the game where the opponent starts the game by declaring some constant $c$ that he plans to achieve (in the second condition), and we need to beat only this $c$. If we can win this game with $c=2^{2k}$, then we can win the game with $c=2^k$ using only $2^{-k}$ total weight, so we can combine these strategies for all $k$ (we also assume that the total weight on objects for $k$th strategy is also bounded by $2^{-k}$, but this is for free, since we need only to $*$-exceed $\m^A$ without any restrictions on the constant). So it remains to win the game for each $c$. It is convenient to scale that game and assume that the opponent needs to match our weights on length exactly (not up to $1/c$-factor), but his total weight is bounded by $c$ (not $1$).

\subsubsection*{Winning the game for $c<2$}

For $c=1$ the game is trivial, since we require that the opponent's weight along the path is strictly greater than our weight on lengths, so it is enough to assign weight $1$ to some length. We start our proof by explaining the strategy for the case $c<2$.

The idea can be explained as follows.  The na\"\i ve strategy is to believe all the time that the current path $A$ is final, and just assign the weights to objects according to $\m^A$, computed based on the current path $A$. (In fact, at each moment we look at some finite prefix of $A$ and follow the labels that appear on this prefix.)  If indeed $A$ never changes, this is a valid strategy: we achieve $\m^A$, and never exceed the total weight $1$ due to the assumption about the labels. But if the path suddenly changes, then all the weight placed because of vertices on old path outside the new path, is lost. If we now follow all the labels on the new path, then our total weight  on objects may exceed $1$ (it was bounded only along every path, and now we have a part of the old path plus the new path).

There is some partial remedy: we can match the weights only up to some constant (say, use only $1\%$ of what the label asks). This is enough since the game allows us to match the measure with arbitrary constant factor. In this way we can tolerate up to hundred changes in the path (each new path generates new weight at most $0.01$), but this does not really help, since the number of changes (of course) is not bounded. Not a surprise, since we did not use the other part of the game, and without this part there is no hope (not all $\mathbf{0}'$-computable $A$ are low).

How can we discourage the opponent to change the path? We can assign a non-zero weight to some length and wait until the opponent matches this weight along the current path. This is a kind of incentive for the opponent not to leave the vertex where he has put some weight: if the opponent leaves it, he would be forced to waste this weight (and put the same weight along the final path for the second time). After that we could implement a request (label) at this vertex: at least we know that if later the path changes and we lose some weight (by implementing the request not on the limit path), the opponent loses some weight, too. This helps if we are careful enough.

Let us explain the details. It would be convenient to represent the strategy as the set of parallel processes: for each vertex $x$ we have a process  $P_x$ that is awake when $x$ is a prefix of the current path (and sleeps when $x$ is not). When awake, the process $P_x$ tries to create the incentive for the opponent not to leave $x$, by forcing him to increase the weight of some vertex above $x$. To make the processes more independent (and the analysis simpler), let us assume that for every vertex $x$ some length $l_x\ge |x|$ is chosen, lengths assigned to different vertices are different, and $P_x$ increases only the weight of $l_x$.

Now we are ready to describe the process~$P_x$. Assume that $x$ carries the request $(i,\eta)$ that asks to add $\eta$ to the weight of object $i$. The process $P_x$ increases the weight of $l_x$ (adding small portions to it and waiting after each portion until the opponent matches this increase along the current path, in some vertex above $x$).  When (and if) the weight of $l_x$ reaches $\varepsilon\eta$ (where $\varepsilon$ is some small positive constant; the choice of $\varepsilon$ depends on $c$, see below), the process increases the weight of object $i$ also by $\varepsilon\eta$ and terminates. 

The processes $P_x$ for different vertices $x$ run in parallel independently, except for one thing: if the total weight spent by all the processes reaches $1$, we terminate all of them (so the increase that would make the total weight greater than $1$ is blocked); after that our strategy stops working (in the hope that the opponent would be unable to match already existing weights not crossing the threshold $c$).

About the small portions: for each vertex $x$ we choose in advance the size $\delta_x$ of the portions used by $P_x$, in such a way that $\sum_x \delta_x <\varepsilon$. (We use here the same small $\varepsilon$ as above.) In this way we guarantee that the total loss (last portions that were not matched because the opponent changes the path instead and does not return, so the process is not resumed) is bounded by $\varepsilon$.

It remains to prove that this strategy wins $c$-game for $c$ close to $2$, assuming that $\varepsilon$ is small enough. First note two properties that are true by construction:
\begin{itemize}
\item the sum of our weights for all lengths does not exceed $1$;
\item at every moment the sum of (our) weights for all objects does not exceed by the sum of (our) weights for all lengths.
\end{itemize}
(Indeed, we stop the strategy preventing the violation of the first requirement, and the second is guaranteed for each $x$-process and therefore for the entire strategy.)

If there is no limit path, the strategy wins the game by definition. So assume that limit path $A$ exists. Now we count separately the weights used by processes $P_x$ for $x$ is on the limit path $A$, and for $x$ outside $A$. Since the weights for $x$ are limited by the request in $x$ (with factor $\varepsilon$) and sum of the all requests along $A$ is at most $1$, the sum of the weights along $A$ is bounded by $\varepsilon$. Now there are two possibilities: either the strategy was stopped (when trying to cross the threshold) or it runs indefinitely. 

In the first case the total weight is close to $1$ (at least $1-\varepsilon$, since the next increase will cross $1$, and all the portions $\delta_x$ are less than $\varepsilon$). So the weight used by processes outside $A$ is at least $1-2\varepsilon$, and if we do not count the last (unmatched) portions, we get at least $1-3\varepsilon$ of weight that the opponent needs to match twice: it was matched above $x$ for $P_x$, and then should be matched again along the limit path (that does not go through $x$; recall that we consider the vertices outside the limit path). So the opponent needs to spend at least $2-6\varepsilon$, otherwise he loses.

In the second case each process $P_x$ for $x$ on the limit path is awake starting from some moment, and is never stopped, so it reaches its target value $\varepsilon\eta$ and adds $\varepsilon\eta$ to the object $i$ (here $(i,\varepsilon)$ is the request in vertex $x$). So our weights on the objects match $\m^A$ for limit path $A$ up to factor $\varepsilon$, and the opponent loses. (We know also that the total weight on objects does not exceed $1$, since it is bounded by the total weight on lengths.)

We therefore have constructed a winning strategy for $2-6\varepsilon$ game, and by choosing a small $\varepsilon$ can win $c$-game for every $c<2$.

\subsubsection*{Using this strategy on a subtree}

Preparing for the recursion, let us look at the described strategy and modify it for using in a subtree rooted at some vertex $x$. We also scale the game and assume that we have some budget $\alpha$ that we are allowed to use (instead of total weight $1$). To guarantee that the strategy does not interfere with other actions outside $x$-subtree we agree that it uses lengths only from some infinite set $L$ of length and nobody else touches these lengths. Then we can assign $l_y\in L$ for every $y$ in the subtree and use them as before. 

Let us describe the strategy in more details. It is composed of processes $P_y$ for all $y$ above $x$. When $x$ is not on the actual path, all these processes sleep, and the strategy is sleeping. But when path goes through $x$, some processes $P_y$ (for $y$ on the path) become active and start increasing the weight of length $l_y$ by small portions $\delta_y$ (the sum of all $\delta_y$ now is bounded by $\alpha\varepsilon$, since we scaled everything by $\alpha$). A supervisor controls the total weight used by all $P_y$, and as soon as it reaches $\alpha$, terminates all $P_y$. When the process $P_y$ reaches the weight $\alpha\varepsilon\eta$, it increases the weight of object $i$ by $\alpha\varepsilon\eta$ (here $(i,\eta)$ is the request at vertex $y$). So everything is as before, but with factor $\alpha$ and only on $x$-subtree.

What does this strategy guarantee?
\begin{itemize}
\item The total weight on lengths used by it is at most $\alpha$.
\item The total weight on objects does not exceed the total weight on lengths.
\item If the limit path  $A$ exists and goes through $x$, then either
   \begin{itemize}
   \item the strategy halts and the opponent either fails to match all the weights or spends more than $c\alpha$ on $x$-subtree; or
   \item the strategy does not halt, and the semimeasure on objects generated by this strategy, $*$-ex\-ceeds $\m^A$, if we omit from $\m^A$ all the requests on the path to $x$.
   \end{itemize}
\end{itemize}
The argument is the same: if the limit path exists and the strategy does not halt, then all the requests along the limit path (except for finitely many of them below $x$) are fulfilled with coefficient $\alpha\varepsilon$. If the strategy halts, the weight used along the limit path does not exceed $\alpha\varepsilon$ (since the sum of requests along each path is bounded by $1$), the weight used in the other vertices of $x$-subtree is at least $\alpha(1-2\varepsilon)$, including at least $\alpha(1-3\varepsilon)$ matched weight that should be doubled along the limit path, and we achieve the desired goal for $c=2-6\varepsilon$.

\textit{Remark}. In the statement above we have to change $\m^A$ by deleting the requests on the path to $x$. We can change the construction by moving requests up the tree when processing vertex $x$ to get rid of this problem. One may also note that omitted requests deal only with finitely many objects, so one can average the resulting semimeasure with some semimeasure that is positive everywhere. So we may ignore this problem in the sequel.

\subsubsection*{How to win the game for $c<3$}

Now we make the crucial step: show how we can increase $c$ by recursively using our strategies. Recall our strategy for $c<2$, and change it in the following ways:
\begin{itemize}
\item Instead of assigning some length $l_x$ for each vertex $x$, let us assign an infinite (decidable uniformly in $x$) set $L_x$ of integers; all elements should be greater than $|x|$ and for different $x$ these sets should be disjoint (it is easy to achieve this).
\item We agree that process $P_x$ (to be defined) uses only lengths from $L_x$.
\item Again $P_x$ is active when $x$ is on the current path, and sleeps otherwise.
\item Previously $P_x$ increased the weight of $l_x$ in small portions, and after each small increase waited until the opponent matched this increase along the current path. Now, instead of that, $P_x$ calls the $x$-strategy described in the previous section, with small $\alpha=\delta_x$, waits until this strategy terminates forcing the opponent to spend almost $2\delta_x$, then calls another instance of $x$-strategy, waits until it terminates, and so on. For that, $P_x$ divides $L_x$ into infinite subsets $L_x^1+L_x^2+\ldots$, using $L_x^s$ for $s$th call of $x$-strategy, and using $\delta_x$ as the budget for each call. 
\end{itemize}

There are several possibilities for the behavior of $x$-strategy called recursively. It may happen that it does infinitely many steps. This happens when $x$ is a part of the limit path,  $x$-strategy never exceeds its budget $\delta_x$, and the entire strategy does not come close to $1$ in its total spending. It this case we win the game (the part of the semimeasure on objects due to $x$-strategy is enough to $*$-exceed $\m^A$).\footnote{This case is called ``the golden run'' in the original exposition of the proof.}

If $x$ is not in the limit path, the execution of $x$-strategy may be interrupted; in this case we know only that it spent not more than its budget, and that the weight used for objects does not exceed the weight used for lengths. (This is similar to the case when an increase at $l_x$ was not matched because the path changed.)

The $x$-strategy may also terminate. In this case we know that the opponent used almost twice the budget ($\delta_x$) on the extensions of $x$, and a new call of $x$-strategy is made for another set of lengths. (This is similar to the case when the increase at $l_x$ was matched; the advantage is that now the opponent used almost twice our weight!)

Finally, the strategy may be interrupted because the total weight used by $x$-processes for all $x$ came close to $1$. Similar thing happened for the case $c<2$. After that everything stops, and we just wait until the opponent will be unable to match all the existing weights or forced to use total weight close to $3$. Indeed, most of our weight (except for $O(\varepsilon)$) was used not on the limit path and already matched with factor close to $2$ there --- so matching it again on the limit path makes the total weight close to $3$. 

\subsubsection*{Induction step}

Now it is clear how one can continue this reasoning and construct a winning strategy for arbitrary $c$. To get a strategy for some $c$,  we follow the described scheme, and the process $P_x$ makes sequential recursive calls of $c'$-strategies for smaller $c'$ (the difference should be less than $1$, for example, we can use $c'=c-0.5$). More formally, we recursively define a process $S(c, x,\alpha,L)$ where $c$ is the desired amplification, $x$ is a vertex, $\alpha$ is a positive rational number (the budget), and $L$ is an infinite set of integers greater than $|x|$.\footnote{The pedantic reader may complain that the parameter is an infinite sets. It in enough to consider infinite sets from some class, say, decidable sets (as we noted in the previous section), or just arithmetic progressions, they are enough for our purposes. Indeed, an arithmetic progression can be split into countably many arithmetic progressions. For example, $1,2,3,4,\ldots$ can be split into $1,3,5,7,\ldots$ (odd numbers), $2,6,10,14\ldots$ (odd numbers times $2$), $4,12,20,28,\ldots$ (odd numbers times $4$), etc.} The requirements for $S(c,x,\alpha,L)$:

\begin{itemize}
\item It increases only weights of lengths in $L$.
\item The total weight used for lengths does not exceed~$\alpha$.
\item At each step the total weight used for objects does not exceed the total weight used for lengths.
\item Assuming that the process is not terminated externally (this means that $x$ belongs to the current path, starting from some moment), it may halt or not, and:
\begin{itemize}
    \item If the process halts, the opponent uses more that $c\alpha$ on strings that have length in $L$ and are above $x$.
    \item If the process does not halt and the limit path $A$ exists, the semimeasure generated on objects by this process only, $*$-exceeds $\m^A$.
\end{itemize}
\end{itemize}

The implementation of $S(c,x,\alpha,L)$ uses recursive calls of $S(c-0.5,y,\beta,L')$; for each $y$ above $x$ a sequence of those calls is made with $\beta=\delta_y$ and $L'$ that are disjoint subsets of $L$ (for different $y$ these $L'$ are also disjoint), similar to what we have described above for the case $c<3$. 
\end{proof}

\section{$\KPt$-low and ML-low oracles}

There is one more description of $\KPt$-low (or $\KPt$-trivial) sequences: they are sequences that (being used as oracles) do not change the notion of Martin-L\"of randomness. As almost all notions of the computability theory, the notion of Martin-L\"of randomness can be relativized by an oracle $a$ (this means that the algorithm that enumerates covers for an effectively null set, now may use oracle $a$). In this way we get (in general) more effectively null sets, and therefore less random sequences. However, for some $a$ the class of Martin-L\"of random sequences remains unchanged.

\begin{definition}
A sequence $a$ is $\MLR$-low if every Martin-L\"of random sequence remains Martin-L\"of random with oracle $a$.
\end{definition}

The Schnorr--Levin criterion of randomness in terms of prefix complexity shows that if $a$ is $\KPt$-low, then $a$ is also $\MLR$-low. The other implication is also true, but more difficult.

\begin{theorem}
Every $\MLR$-low sequence is $\KPt$-low.
\end{theorem}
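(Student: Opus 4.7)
The plan is to prove the contrapositive: if $A$ is not $\KPt$-low, then $A$ is not $\MLR$-low. Combined with the preceding theorem (every $\KPt$-trivial sequence is $\KPt$-low), this yields the result. The main tool is the relativized Schnorr--Levin theorem: $Z \in \MLR^A$ iff there is a constant $c$ with $K^A(Z\upharpoonright n) \geq n - c$ for all $n$. Hence it suffices to produce $Z \in \MLR$ along which $n - K^A(Z\upharpoonright n)$ is unbounded; such a $Z$ lies in $\MLR \setminus \MLR^A$, refuting $\MLR$-lowness.

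Assume $A$ is not $\KPt$-low, so the ratio $\m^A(\sigma)/\m(\sigma)$ is unbounded on strings. Consider the standard $A$-Martin-L\"of test defined by $V_c^A = \{Z : \exists n\ K^A(Z\upharpoonright n) \leq n - c\}$; the relativized Kraft inequality gives $\lambda(V_c^A) \leq 2^{-c+1}$, so $(V_c^A)$ is an $A$-ML-test. A real $Z$ lies in $\bigcap_c V_c^A$ iff $Z \notin \MLR^A$, again by (relativized) Schnorr--Levin. The problem therefore reduces to exhibiting a single $Z \in \MLR \cap \bigcap_c V_c^A$, witnessing both genuine ML-randomness and non-relativized compressibility by $A$.

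To construct such a $Z$, fix a $\Pic$ class $P \subseteq \MLR$ of positive measure (for instance, the complement of a fixed level of the universal ML-test). The plan is to show that for each $c$, the set $P \cap V_c^A$ is nonempty, and in fact that one can arrange a single $Z \in P$ lying in every $V_c^A$. This uses the pointwise failure of $\KPt$-lowness to generate enough $A$-relative a priori mass on prefixes inside $P$. The construction proceeds stage by stage: at each stage we descend the binary tree into a subtree on which $\m^A$ is large (exploiting that $\m^A/\m$ is unbounded), while ensuring at each step that the subtree meets $P$ in positive measure, by discarding only a summable amount of measure overall. By K\"onig's lemma applied to the remaining tree, we extract an infinite path $Z$.

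The principal obstacle is exactly this promotion of the pointwise gap $K > K^A$ to a measure-theoretic gap realized along the prefixes of some specific $Z \in P$. This is dual to the golden-run construction given earlier in the paper for showing that $\KPt$-triviality implies $\KPt$-lowness: one controls a ``budget'' of lost measure (ensuring we remain in $P$) versus accumulated $A$-compression savings (pushing us into $V_c^A$). The total measure budget is finite because $\lambda(P) > 0$ and each $V_c^A$ has measure $\leq 2^{-c+1}$, so the descent can be iterated for all $c$ simultaneously while losing at most $\sum_c 2^{-c+1} < \infty$ of the measure of $P$. The resulting $Z$ is ML-random, but its $A$-relative prefix complexities dip arbitrarily far below $n$, completing the contrapositive.
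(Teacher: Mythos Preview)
Your proposal has a genuine gap at the central step. You assume $A$ is not $\KPt$-low, so $\m^A(\sigma)/\m(\sigma)$ is unbounded over strings $\sigma$, and you then try to ``descend the binary tree into a subtree on which $\m^A$ is large'' while staying inside a $\Pi^0_1$ class $P\subseteq\MLR$. But nothing in the hypothesis links the strings where $\m^A/\m$ is large to prefixes of ML-random sequences: those strings could all be highly compressible (indeed, the strings witnessing large $\m^A/\m$ could themselves have small $\m$, hence lie entirely outside $P$). You have no mechanism forcing the compressibility gap to occur \emph{along a path in $P$}.

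The measure accounting is also backwards. You want $Z\in P\cap\bigcap_c V_c^A$, and you write that ``each $V_c^A$ has measure $\le 2^{-c+1}$, so the descent can be iterated\ldots\ while losing at most $\sum_c 2^{-c+1}$.'' But the smallness of $\lambda(V_c^A)$ is an obstacle, not a budget: you must \emph{land inside} each $V_c^A$, so their small measure makes this harder, and $\bigcap_c V_c^A$ is a null set. A positive-measure descent argument cannot, by itself, hit a prescribed null target. (Also, the reference to the preceding $\KPt$-trivial $\Rightarrow$ $\KPt$-low theorem is a non sequitur here; the contrapositive you state is already the full content of what is to be proved.)

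The paper's proof takes an entirely different route. It establishes the stronger two-oracle equivalence $A\leK B \Leftrightarrow A\leMLR B$ by first characterizing $A\leMLR B$ as: every $A$-effectively open set of measure $<1$ is covered by some $B$-effectively open set of measure $<1$ (this uses Ku\v cera's observation that the first level of a universal test already determines randomness, iterated along tails). It then converts between lower-semicomputable convergent series $\sum a_n$ and effectively open sets of measure $<1$ via the product encoding $[a_0,1]\times[a_1,1]\times\cdots$, whose complement has measure $1-\prod(1-a_n)<1$. Applying the covering property to this set and reading off the $b_n$ from the $B$-cover yields $a_n\le b_n$ with $\sum b_n<\infty$, i.e., $\m^A\le O(\m^B)$. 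This sidesteps entirely the problem of locating the compressibility gap along a single random path.
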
 

We will prove a more general result. 

\begin{definition}
Let $a$ and $b$ be two sequences (considered as oracles). We say that $a\leK b$ if $$\KP^b(x)\le \KP^a(x)+O(1).$$ We say that $a\leMLR b$ if every Martin-L\"of random with oracle $b$ sequence is also Martin-L\"of random with oracle $a$. \end{definition}

If an oracle $b$ is stronger in Turing sense (=computes) some oracle $a$, then $b$ gives larger effectively null sets, smaller set of random sequences, and smaller complexity function, so we have $a\leMLR b$ and $a\leK b$. So both orderings are more coarse than the Turing degree ordering.

Using this definitions, we can reformulate the definitions: sequence $a$ is $\KPt$-low if $a\leK \mathbf{0}$ and is $\MLR$-low if $a\leMLR \mathbf{0}$. So it is enough to prove the following result:

\begin{theorem}
$a\leK b \ \Leftrightarrow a\leMLR b$ for every $a$ and $b$.
\end{theorem}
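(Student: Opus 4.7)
$(\Rightarrow)$: Suppose $a \leK b$; fix a constant $c$ with $K^b(x) \le K^a(x) + c$. Let $x \in MLR^b$. By the Schnorr--Levin criterion relativized to $b$, $K^b(x \upharpoonright n) \ge n - O(1)$, so $K^a(x \upharpoonright n) \ge n - O(1)$, and Schnorr--Levin (relative to $a$) puts $x \in MLR^a$. Hence $MLR^b \subseteq MLR^a$, that is, $a \leMLR b$.

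$(\Leftarrow)$: Suppose $a \leMLR b$. The plan is to deduce $a \leK b$ by relativizing to the base oracle $b$ the machinery developed in the previous section. The argument is a chain of three implications.

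\emph{Step (i).} First, establish the relativized form of ``low for ML-randomness implies $K$-trivial'':
\[
a \leMLR b \;\Longrightarrow\; K^b(a \upharpoonright n) \le K^b(n) + O(1).
\]
The classical implication (case $b = \emptyset$) is Nies's theorem, proved via the cost-function method, or equivalently by inspecting prefixes of $\Omega^b$, which is ML-random relative to $b$ and hence, by hypothesis, remains ML-random relative to $a$. Every construction in that proof parametrizes cleanly over the base oracle, so inserting $b$ yields the relativized statement.

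\emph{Step (ii).} Next, apply the main theorem of the previous section, with oracle $b$ inserted uniformly throughout:
\[
K^b(a \upharpoonright n) \le K^b(n) + O(1) \;\Longrightarrow\; K^b(x) \le K^{a \oplus b}(x) + O(1) \text{ for all } x.
\]
The proof is a verbatim transcription of the game argument with ``computable'' everywhere replaced by ``$b$-computable''. First, the relativized version of the opening theorem of the $K$-trivial section places $a$ on a bounded-width $\Pi^0_1(b)$-tree, yielding a $\Delta^0_2(b)$-approximation $(a_s)$. In the game, the opponent plays this approximation together with a $b$-lower-semicomputation of $m^{a \oplus b}$ (built from $a_s$ and oracle $b$); our $b$-computable strategy, the same strategy as before but now observing $b$-computable moves, produces a $b$-enumerable semimeasure $\mu$ on integers with $\mu \ge^* m^{a \oplus b}$. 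Universality of $m^b$ among $b$-enumerable semimeasures gives $\mu \le^* m^b$, hence $m^{a \oplus b} \le^* m^b$, which is the displayed complexity inequality.

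\emph{Step (iii).} Combine with the trivial inequality $K^{a \oplus b}(x) \le K^a(x) + O(1)$ (since $a \oplus b$ is a stronger oracle than $a$):
\[
K^b(x) \le K^{a \oplus b}(x) + O(1) \le K^a(x) + O(1),
\]
which is precisely $a \leK b$.

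\emph{Where the work lies.} The bulk of the technical effort is in Step~(ii): one must verify that the entire recursive game construction --- the processes $P_x$, the gradual-increase strategies, and the recursive invocations of $S(c, x, \alpha, L)$ --- transposes uniformly when $b$ is carried as a background oracle. This is careful bookkeeping rather than new mathematics. Step~(i), while classical, is the non-trivial half of the ML-low / $K$-trivial equivalence; its relativization amounts to checking that the cost-function framework admits an arbitrary base oracle, which it does. Step~(iii) is purely formal.
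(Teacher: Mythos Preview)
Your $(\Rightarrow)$ direction is fine and matches the paper. The problem is in Step~(i) of the $(\Leftarrow)$ direction, and it is a genuine gap, not just bookkeeping.

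Relativizing ``low for MLR $\Rightarrow$ $K$-trivial'' to a base oracle $b$ yields
\[
\MLR^b \subseteq \MLR^{a\oplus b} \;\Longrightarrow\; K^b(a\upharpoonright n)\le K^b(n)+O(1),
\]
because ``$a$ is low for MLR'' relativized to $b$ means that adding $a$ to $b$ does not shrink $\MLR^b$. But your hypothesis $a\leMLR b$ says only $\MLR^b\subseteq\MLR^a$, which is \emph{not} the same as $\MLR^b\subseteq\MLR^{a\oplus b}$. From $x\in\MLR^a$ and $x\in\MLR^b$ one cannot in general conclude $x\in\MLR^{a\oplus b}$; and proving the implication $a\leMLR b\Rightarrow a\oplus b\leMLR b$ directly is exactly the hard content of the theorem you are trying to establish. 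Your alternative route via $\Omega^b$ has the same defect: the classical proof that ``$\Omega\in\MLR^A\Rightarrow A$ is $K$-trivial'' relies on approximating $\Omega$ computably (it is left-c.e.), but $\Omega^b$ is only left-$b$-c.e., so an $a$-computable construction cannot run that argument unless $b\le_T a$.

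The paper avoids this trap by taking a completely different path that never passes through $K$-triviality. Using a Ku\v{c}era-type lemma it shows that $a\leMLR b$ is equivalent to: every $a$-effectively open set of measure $<1$ is covered by some $b$-effectively open set of measure $<1$. It then encodes a lower $a$-semicomputable convergent series $\sum a_n$ as the complement of the closed set $\prod_n[a_n,1]$ in a product of copies of $[0,1]$, covers this by a $b$-effectively open $V$, and reads off a dominating $b$-semicomputable series $\sum b_n$ from $V$. This directly gives $\m^a\le^*\m^b$, i.e.\ $a\leK b$, with no detour through relativized lowness.
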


\begin{proof}
Again in one direction (from left to right) it follows directly from the Schnorr--Levin randomness criterion. The other direction is more difficult\footnote{This was to  be expected. The relation $a\leK b$ is quantitative: two functions coincide with $O(1)$-precision; the relation $a\leMLR b$ is more qualitative, we speak here about yes/no question. One can also consider the quantitative version, with randomness deficiencies, but this is not needed: the relation $\leMLR$ is strong enough.}, and will be split in several steps.

Recall that the set of non-random (in Martin-L\"of sense; we do not use other notions of randomness here) sequences can be described using universal Martin-L\"of test, as the intersection of effectively open sets
     $$
 U_1\supset U_2 \supset U_3\supset\ldots     
     $$
where $U_i$ has measure at most $2^{-i}$. The following observation goes back to Ku\v cera and says that  the first layer of this test,  the set $U_1$, is enough to characterize all non-random sequences. 
\begin{lemma}\label{kucera}
Let $U$ be an effectively open set of measure less than $1$ that contains all non-random sequences. Then a sequence $a=a_0 a_1 a_2\ldots$ is non-random if and only if all its tails $a_k a_{k+1} a_{k+2}\ldots$ belong to $U$. 
\end{lemma}

\begin{proof}[Proof of Lemma~\ref{kucera}]
If $a$ is non-random, then all the tails are non-random and therefore belong to $U$. In the other direction: represent $U$ as a union of disjoint intervals $[u_i]$ (by $[v]$ we denote the set of all sequences that have prefix $v$). Then $\rho=\sum 2^{-|u_i|}$ is less than $1$. If all tails of $a$ belong to $U$, then $a$ starts with some $u_i$, the rest is a tail that starts with some $u_j$, etc., so $a$ can be split into pieces that are among $u_i$. The set of sequences of the form ``some $u_i$, then something'' has measure $\rho$, the set of sequence of the form ``some $u_i$, some $u_j$, then something'' has measure $\rho^2$, etc. These sets are effectively open and their measures $\rho^n$ effectively converge to $0$. So their intersection is an effectively null set and $a$ is non-random. Lemma is proven.
\end{proof}

The argument gives also the following

\textbf{Corollary}: \emph{a sequence $x$ is non-random if there exists an effectively open set $U$ of measure less than $1$ such that all tails of $x$ belong to $U$}.

\smallskip

This argument can be relativized, so randomness with oracle $X$ can be characterized in terms of $X$-effectively open sets of measure less that $1$: \emph{a sequence $x$ is $X$-nonrandom if there exists an $X$-effectively open set $U$ of measure less than $1$ such that all tails of $x$ belong to $U$}. This gives one implication in the following equivalence (we denote here the oracles by capital letter not to mix them with sequences):

\begin{lemma}\label{nonfull}
Let $A$ and $B$ be two oracles. Then $A\leMLR B$ if and only if every $A$-effectively open set of measure less than $1$ can be covered by some $B$-effectively open set of measure less than $1$.
\end{lemma}

\begin{proof}[Proof of Lemma~\ref{nonfull}]
In one direction (if, $\Leftarrow$) it follows for the discussion above: if $a$ is not $A$-random, its tails can be covered by some $A$-effectively open set of measure less than $1$ and therefore by some $B$-effectively open set of measure less than $1$, so $a$ is not $B$-random.

In the other direction: let $U$ be an $A$-effectively open set of measure $1$ that cannot be covered by any $B$-effectively open set of measure less than $1$. The set $U$ is the union of $A$-enumerable sequence of disjoint intervals $[u_1],[u_2],[u_3]$, etc. Consider a set $V$ that is $B$-effectively open, contains all $B$-non-random sequences and has measure less than $1$ (e.g., the first level of universal $B$-Martin-L\"of test). By assumption $U$ is not covered by $V$, so some interval $[u_i]$ of $U$ is not (entirely) covered by $V$.

The set $V$ has the following special property: if it does not contain \emph{all} points of some interval, then it cannot contain \emph{almost all} points of this interval: the non-covered part has some positive measure. Indeed, the non-covered part is a $B$-effectively closed set, and if it has measure zero, it has $B$-effectively measure zero, so all non-covered sequences are $B$-non-random, and therefore should be covered by $V$.

So we found an interval $[u_i]$ in $U$ whose part of positive measure is outside $V$. Then consider the set $V_1=V/u_i$, i.e., the set of infinite sequences $\alpha$ such that $u_i\alpha\in V$. This is a $B$-effectively open set of measure less than $1$, so it does not cover $U$ (again by our assumption). So there exists some interval $[u_j]$ not covered by $V/u_i$. This means that $[u_i u_j]$ is not covered by $V$. Then we repeat the argument and conclude that non-covered part has positive measure, so $V/u_i u_j$ is a $B$-effectively open set of measure less than $1$, so it does not cover some $[u_k]$, etc.  In the limit we get a sequence $u_i u_j u_k\ldots$ whose prefixes define intervals not covered fully by $V$. Since $V$ is open, this sequence does not belong to $V$, so it is $B$-random. On the other hand, it is not $A$-random, as the argument from the proof of Lemma~\ref{kucera} shows. 
\end{proof}

Let us summarize where we are now. Assuming that $A\leMLR B$, we have shown that every $A$-effectively open set of measure less than $1$ can be covered by some $B$-effectively open set of measure less than $1$. And we need to show somehow that $A\leK B$, i.e., $\KP^B\le \KP^A$ up to an additive constant, or $\m^A\le \m^B$ up to a constant factor. This can be reformulated as follows: \emph{for every lower $A$-semicomputable converging series $\sum a_n$ of reals there exists a converging lower $B$-semicomputable series $\sum b_n$ of reals such that $a_n\le b_n$ for every $n$}.

So to connect our assumption and our goal, we need to convert somehow a converging lower semicomputable series into an effectively open set of measure less than $1$ and vice versa. We may assume without loss of generality that all $a_i$ are strictly less than $1$. Then $\sum a_n<\infty$ is
equivalent to 
    $$
(1-a_0)(1-a_1)(1-a_2)\ldots > 0.    
    $$
This product is a measure of an $A$-effectively closed set 
    $$
 [a_0,1]\times [a_1,1]\times[a_2,1]\times\ldots    
    $$
so its complement 
  $$
 \{ (x_0,x_1,\ldots)\mid (x_0<a_0) \lor (x_1<a_1)\lor \ldots \}   
    $$
is an $A$-effectively open set of measure less than $1$. (Here we split the Cantor space into a countable product of Cantor spaces and identify each of them with $[0,1]$ equipped with standard uniform measure on the unit interval.) We are finally ready to apply our assumption and find some $B$-effectively open set $V$ that contains this complement.
  
Now let us define $b_0$ as supremum of all $z$ such that
 $$[0,z]\times [0,1]\times [0,1]\times\ldots \subset  V$$   
This product is compact for every $z$, and $V$ is $B$-effectively open, so we can $B$-enumerate all rational $z$ with this property, and their supremum $b_0$ is lower $B$-semicomputable. Note that all $z<a_0$ have this property, so $a_0\le b_0$. In a similar way we define all $b_i$ and get a lower $B$-semicomputable series $b_i$ such that $a_i\le b_i$. It remains to show that $\sum b_i$ is finite. Indeed, the set
    $$
 \{ (x_0,x_1,\ldots)\mid (x_0<b_0) \lor (x_1<b_1)\lor \ldots \}   
    $$
is a part of $V$, and therefore has measure less than $1$; its complement 
    $$
 [b_0,1]\times [b_1,1]\times[b_2,1]\times\ldots    
    $$   
has measure $(1-b_0)(1-b_1) (1-b_2)\ldots$, therefore this product is positive and the series $\sum b_i$ converges. This finishes the proof.   
\end{proof}

\subsubsection*{Acknowledgments}
This exposition was finished while one of the authors (A.S.) was invited to National University of Singapore IMS \emph{Algorithmic Randomness} program. The authors thank IMS for the support and for the possibility to discuss several topics (including this exposition) with the other participants. Special thanks to Rupert H\"olzl and Nikolai Vereshchagin. We thank also Andre Nies for the suggestion to write down this exposition for the Logic Blog and comments, and all the participants of the course taught by L.B. at Poncelet lab (CNRS, Moscow), especially Misha Andreev and Gleb Novikov. Last but not least, we are grateful to Joe Miller who presented the proof of equivalence between $\leK$ and $\leMLR$ while visiting LIRMM several years ago.

\bibliographystyle{plain}
\def\cprime{$'$}

\end{document}